\title{The primitive spectrum of a semigroup of Markov operators}
\begin{document}
\maketitle
\begin{abstract}
For a semigroup $\EuScript{S}$ of Markov operators on a space of continuous functions, we use $\EuScript{S}$-invariant ideals to describe qualitative properties of $\EuScript{S}$ such as mean ergodicity and the structure of its fixed space. For this purpose we focus on \emph{primitive $\EuScript{S}$-ideals} and endow the space of those ideals with an appropriate topology. This approach is inspired by the representation theory of C*-algebras and can be adapted to our dynamical setting.\\
In the particularly important case of Koopman semigroups, we characterize the centers of attraction of the underlying dynamical system in terms of the invariant ideal structure of $\EuScript{S}$.
\medskip\\
  \textbf{Mathematics Subject Classification (2010)}. Primary 47A35, 47D03; Secondary 37B05, 37B25, 47B65.
\end{abstract}

\section{Introduction}
\label{sec:1}
The \emph{primitive spectrum} is a useful tool in the study of C*-algebras (see, e.g., Chapter IV of \cite{Dixm1977}, Section 4.3 of \cite{Pede1979} or Section II.6.5 of \cite{Blac2006}) and plays a crucial role in representation theory (cf. \cite{Hofm2011}). Given a C*-algebra $A$ it is defined as
\begin{align*}
\mathrm{Prim}(A)\defeq \{\ker \pi \mid 0\neq \pi \textrm{ irreducible representation of A}\}.
\end{align*}
Equipped with the \emph{hull-kernel topology} (also called \emph{Jacobson topology}) it becomes a quasi-compact $\mathrm{T}_0$-space. A nice application is the so called Dauns-Hofmann Theorem asserting that---\color{black}in the unital case---\color{black}the center of $A$ is canonically isomorphic to $\mathrm{C}(\mathrm{Prim}(A))$.\\
In this note we study a dynamical version of the primitive spectrum in the commutative and unital case. Starting from a right amenable semigroup $\EuScript{S}$ of Markov operators on the space of continuous functions $\mathrm{C}(K)$ on some compact space $K$ we introduce the primitive spectrum $\mathrm{Prim}(\EuScript{S})$ of $\EuScript{S}$ as the set of absolute kernels of ergodic measures. Again we equip the primitive spectrum with a hull-kernel topology and obtain a quasi-compact $\mathrm{T}_0$-space. We then describe the space $\mathrm{C}(\mathrm{Prim}(\EuScript{S}))$ and give applications to topological dynamics and ergodic theory.\medskip\\
We now give a more detailed description of the results.\\
Based on two papers of H. H. Schaefer (see \cite{Scha1967} and \cite{Scha1968}) as well as Paragraph III.8 of \cite{Scha1974} we consider $\EuScript{S}$-invariant ideals and measures in  Section 2 \color{black} recalling some basic definitions and facts.\\
In the subsequent sections we introduce and study radical $\EuScript{S}$-ideals.  In Section 3 we give the definition and prove an equivalent characterization in the metric case (see Proposition \ref{radid1}). In the fourth section we then establish \color{black} a close connection between radical $\EuScript{S}$-ideals, centers of attraction appearing in topological dynamics and stability conditions of the semigroup (see Theorem \ref{maintheorem} and Theorem \ref{center}).\\
The last three sections are devoted to the primitive spectrum of $\EuScript{S}$ as a topological space  and its applications. In Section 5 we define the topology (cf. Proposition \ref{closureop}), state its basic properties and give some examples.\\
In Section 6 we then prove a Dauns-Hofmann-type theorem showing that if $\EuScript{S}$ is radical free (see Definition \ref{hullker}) the space of continuous functions on the primitive spectrum $\mathrm{C}(\mathrm{Prim}(\EuScript{S}))$ is canonically isomorphic to the fixed space $\mathrm{fix}(\EuScript{S})$ of the semigroup $\EuScript{S}$ (see Theorem \ref{contfunct1}). We then extend this result to the general case of not necessarily radical free $\EuScript{S}$ (see Theorem \ref{contfunct}).\\
As an application we obtain in Section 7 a new description of mean ergodicity of semigroups of Markov operators (see Theorem \ref{meanergodiccor2}) which generalizes Schaefer's Theorem 2 of \cite{Scha1967} in two different ways. On one hand we consider the more general setting of right amenable semigroups instead of single operators. But more importantly, we obtain---in contrast to Schaefer's work---a full characterization of mean ergodicity . \color{black} We finally look at some examples illustrating these results (cf. Examples \ref{finalex}).\\
It should be pointed out that while maximal invariant ideals (which have been the central objects in \cite{Scha1967} and \cite{Scha1968}) are enough to describe mean ergodic Markov operators and semigroups, the results of our paper show that primitive ideals are the natural algebraic \color{black} structure \color{black} to describe dynamical properties of general Markov semigroups, see also Remark \ref{meanprim2} below.\medskip\\
\color{black} In the following we always assume $K$ to be a compact (Hausdorff) space. We denote the Banach lattice of continuous complex-valued functions on $K$ by $\mathrm{C}(K)$ and identify \color{black} the dual space $\mathrm{C}(K)'$ of $\mathrm{C}(K)$ \color{black} with the Banach lattice of complex regular Borel masures on $K$. Moreover, we refer to \cite{Scha1974} and \cite{Meye1991} for Banach lattices and their ideal structure  and remind the reader that the closed lattice ideals of $\mathrm{C}(K)$ coincide with the closed algebra ideals and are precisely the sets
\begin{align*}
I_L \defeq \{f \in \mathrm{C}(K)\mid f|_L = 0\}
\end{align*}
with $L \subset K$ closed. Recall also \color{black} that a positive operator $T \in \mathscr{L}(\mathrm{C}(K))$ is called \emph{Markov} if $T\mathbbm{1}=\mathbbm{1}$.\\
We now fix a semigroup $\EuScript{S} \subset \mathscr{L}(\mathrm{C}(K))$ of Markov operators which is \emph{right amenable} (cf. Section 2.3 of \cite{BeJuMi1989}) if endowed with the strong operator topology, i.e., there is a positive element $m \in \mathrm{C}_{\mathrm{b}}(\EuScript{S})'$ (called  \emph{right invariant mean}) such that $m(\mathbbm{1}) =1$ and $m(R_Sf) = m(f)$ for every $f \in \mathrm{C}_{\mathrm{b}}(\EuScript{S})$ where $R_S(f)(T) \defeq f(TS)$ for all $T, S \in \EuScript{S}$. All abelian topological semigroups and compact topological groups are amenable and, in particular, right amenable. For more examples and counterexamples we refer to \cite{Day1961} and Chapter 1 of \cite{Pate1988}.\\
Note that the important cases of semigroups generated by a single operator and one-parameter semigroups are contained in our results \color{black} since these are always abelian\color{black}.\\
Many examples of Markov operators and semigroups arise from topological dynamical systems on $K$. In fact, if $\varphi\colon K \longrightarrow K$ is a continuous mapping, then the asscoiated \emph{Koopman operator} $T_\varphi \in \mathscr{L}(\mathrm{C}(K))$ defined by $f \defeq f \circ \varphi$ for $f \in \mathrm{C}(K)$ is a Markov lattice operator. We write $\EuScript{S}_\varphi$ for the semigroup $\{T_\varphi^n\mid n \in \N_{\color{black}0\color{black}}\}$.
\section{Ergodic Measures and Primitive Ideals}
\label{sec:2}
In this section we introduce primitive $\EuScript{S}$-ideals adapting concepts from the theory of C*-algebras and start with the following definition going back to H. H. Schaefer (see \cite{Scha1967}). \color{black} Recall that an ideal $I$ of $\mathrm{C}(K)$ is called \emph{proper} if $I \neq \mathrm{C}(K)$.\color{black}
\begin{definition}
A closed proper ideal $I \subset \mathrm{C}(K)$ is an \emph{$\EuScript{S}$-ideal} if it is $\EuScript{S}$-invariant, i.e., $SI\subset I$ for each $S \in \EuScript{S}$. It is called \emph{maximal} if it is maximal among all $\EuScript{S}$-ideals with respect to inclusion.
\end{definition}
\begin{remark}
 If $I$ is an $\EuScript{S}$-ideal, then a standard application of Zorn's lemma shows that $I$ is contained in a maximal proper $\EuScript{S}$-invariant ideal. Since there are no dense \color{black} proper \color{black} ideals in $\mathrm{C}(K)$, this ideal is already closed and therefore each $\EuScript{S}$-ideal is contained in a maximal $\EuScript{S}$-ideal (cf. Proposition 1 in \cite{Scha1967}).\color{black}
\end{remark}
In \cite{Sine1968} R. Sine used the concept of a self-supporting set of a Markov operator. Generalizing this to our setting, a non-\color{black}empty closed set $L \subset K$ is called \emph{self-supporting} if the measure $S'\delta_x \in \mathrm{C}(K)'$ has support in $L$ for each $x \in L$ and $S \in \EuScript{S}$. \color{black} Recall that here the \emph{support} $\supp \mu$ of a probability measure $\mu \in \mathrm{C}(K)'$ is the smallest closed subset $A \subset K$ with $\mu(A) = 1$.\color{black}\\
Each self-supporting set $L$ defines an $\EuScript{S}$-ideal
\begin{align*}
I_L \defeq \{f \in \mathrm{C}(K)\mid f|_L = 0\}.
\end{align*}
Conversely, each $\EuScript{S}$-ideal is an $I_L$ for some self-supporting set $L$ and the mapping $L \mapsto I_L$ is bijective. Moreover, each maximal $\EuScript{S}$-ideal corresponds to a minimal self-supporting set.\\
Given an $\EuScript{S}$-ideal $I$ we call the unique self-supporting set $L$ with $I_L = I$ the \emph{support of $I$} and write $L = \supp I$.
\begin{remark}
For each $\EuScript{S}$-ideal $I$, the semigroup $\EuScript{S}$ induces a semigroup $\EuScript{S}_I$ of Markov operators on $\mathrm{C}(\supp I)$ given by 
\begin{align*}
\EuScript{S}_I \defeq \{S_I \mid S \in \EuScript{S}\}
\end{align*}
with $S_If \defeq SF|_{\supp I\color{black}}$ for $S \in \EuScript{S}$ and $f \in \mathrm{C}(\supp I\color{black})$ where $F \in \mathrm{C}(K)$ is any extension of $f$ to $K$. It is readily checked that $I$ is maximal if and only if $\EuScript{S}_I$ is irreducible, i.e., there are no non-trivial $\EuScript{S}_I$-ideals (see the corollary to Proposition III.8.2 in \cite{Scha1974}).
\end{remark}
We are primarily interested in $\EuScript{S}$-ideals defined by measures. The \emph{absolute kernel} of a measure $0 \leq \mu \in \mathrm{C}(K)'$ is
\begin{align*}
I_{\mu} \defeq \{f \in \mathrm{C}(K)\mid \langle |f|,\mu \rangle = 0\}.
\end{align*}
If $\mu$ is invariant, i.e., $S'\mu = \mu$ for each $S \in \EuScript{S}$, this is an $\EuScript{S}$-ideal.\\
We \color{black} write $\mathrm{P}_{\EuScript{S}}(K) \subset \mathrm{C}(K)'$ for the space of invariant probability measures on $K$ equipped with the weak* topology. By right amenability of $\EuScript{S}$ this is always a nonempty compact convex set (this is a simple consequence of Day's fixed point theorem, see  Theorem 3 of \cite{Day1961}\color{black}).\\
\color{black} We recall that for each $\mu \in \mathrm{P}_{\EuScript{S}}(K)$ the space $\mathrm{L}^1(K,\mu)$ is the completion of $\mathrm{C}(K)/I_{\mu}$ with respect to the $\mathrm{L}^1$-norm. Since $I_{\mu}$ is $\EuScript{S}$-invariant, every $S \in \EuScript{S}$ induces an operator on $\mathrm{C}(K)/I_{\mu}$ which then uniquely extends to a bi-Markov operator $S_\mu$ on $\mathrm{L}^1(K,\mu)$, i.e., $S_\mu$ is a positive operator on $\mathrm{L}^1(K,\mu)$ with $S_\mu\mathbbm{1} = \mathbbm{1}$ and $S_\mu'\mathbbm{1} = \mathbbm{1}$. We write $\EuScript{S}_\mu \defeq \{S_\mu\mid S \in \EuScript{S}\}$ for the semigroup on $\mathrm{L}^1(K,\mu)$ induced by $\EuScript{S}$.\color{black}
\begin{definition}
A measure $\mu \in \mathrm{P}_{\EuScript{S}}(K)$ is called \emph{ergodic} if the fixed space $\fix(\EuScript{S}_{\mu})$ in $\mathrm{L}^1(K,\mu)$ is one-dimensional.
\end{definition}
The following characterization of ergodicity generalizes a result of M. Rosenblatt (cf. \cite{Rose1976}) and is well-known for single operators. We give a short proof in case of semigroup actions  inspired by the proof of Proposition 10.4 of \cite{EFHN2015}. Here and in the following we write $\mathrm{ex}\, M$ for the set of extreme points of a convex subset $M$ of a vector space.\color{black}
\begin{proposition}\label{ergodicity}
A measure $\mu \in \mathrm{P}_{\EuScript{S}}(K)$ is ergodic if and only if $\mu \in \mathrm{ex}\,\mathrm{P}_{\EuScript{S}}(K)$.
\end{proposition}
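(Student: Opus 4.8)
The plan is to prove the two implications separately, using the standard duality between ergodicity and extremality. For the direction ``$\mu$ ergodic $\Rightarrow$ $\mu$ extreme'', I would argue by contraposition. Suppose $\mu = \tfrac{1}{2}(\mu_1 + \mu_2)$ with $\mu_1 \neq \mu_2$ in $\mathrm{P}_{\EuScript{S}}(K)$. Then $\mu_1$ is absolutely continuous with respect to $\mu$, so by Radon--Nikodym there is a density $0 \leq h = \tfrac{d\mu_1}{d\mu} \in \mathrm{L}^1(K,\mu)$, and in fact $h \in \mathrm{L}^\infty(K,\mu)$ since $\mu_1 \leq 2\mu$. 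The key step is to show that $h$ is fixed by $\EuScript{S}_\mu$: invariance of $\mu_1$ under each $S'$ translates, via the defining relation of $S_\mu$ on $\mathrm{L}^1(K,\mu)$ and its predual action, into $S_\mu h = h$ (here one uses that $S_\mu$ is bi-Markov, so that its adjoint acts on $\mathrm{L}^\infty$ and the pairing $\langle f, h\,d\mu\rangle$ is preserved). Since $\mu_1 \neq \mu_2$ forces $h$ to be non-constant, $\fix(\EuScript{S}_\mu)$ contains the non-trivial element $h$ in addition to $\mathbbm{1}$, hence is at least two-dimensional, so $\mu$ is not ergodic.

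For the converse ``$\mu$ extreme $\Rightarrow$ $\mu$ ergodic'', again by contraposition: assume $\fix(\EuScript{S}_\mu)$ is more than one-dimensional. Since $\EuScript{S}_\mu$ consists of bi-Markov operators on $\mathrm{L}^1(K,\mu)$ and $\EuScript{S}$ is right amenable, a Cesàro/net averaging argument along a right-invariant mean produces a positive contractive projection onto $\fix(\EuScript{S}_\mu)$ (this is where amenability enters, exactly as in the cited Proposition 10.4 of \cite{EFHN2015}); in particular $\fix(\EuScript{S}_\mu)$ is a sublattice, so it contains a real fixed function $g$ that is not a.e.\ constant. By adding a constant and rescaling I may assume $0 \leq g \leq 1$ and $\int g\,d\mu = \tfrac{1}{2}$, with $g$ not a.e.\ equal to $\tfrac12$. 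Then $\mu_1 \defeq 2g\,\mu$ and $\mu_2 \defeq 2(1-g)\,\mu$ are distinct probability measures with $\mu = \tfrac{1}{2}(\mu_1+\mu_2)$, and each is $\EuScript{S}$-invariant because $g$ (hence $1-g$) is $\EuScript{S}_\mu$-fixed — one checks $\langle f, S'\mu_i\rangle = \langle S f, g\,d\mu\rangle\cdot 2 = \langle f, g\,d\mu\rangle\cdot 2 = \langle f,\mu_i\rangle$ using $S_\mu' g = g$ where appropriate. This exhibits $\mu$ as a non-trivial convex combination, so $\mu \notin \mathrm{ex}\,\mathrm{P}_{\EuScript{S}}(K)$.

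The main obstacle, and the only genuinely non-formal point, is passing correctly between the three levels: invariance of a measure under the adjoints $S'$ on $\mathrm{C}(K)'$, invariance of the corresponding $\mathrm{L}^1$- or $\mathrm{L}^\infty$-density under the induced operators $S_\mu$, and the averaging step that gives the projection onto $\fix(\EuScript{S}_\mu)$. One must be careful that the density $h = d\mu_1/d\mu$ is genuinely $S_\mu$-fixed rather than merely satisfying a weaker averaged identity, which uses that $S_\mu$ is bi-Markov (so $S_\mu'\mathbbm1=\mathbbm1$ guarantees the pairing with the constant function behaves well and no mass is lost); and in the converse direction one needs the amenability-driven projection to land inside a lattice so that an honest non-constant $0\le g\le 1$ can be extracted. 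Once these identifications are in place, both directions are short. I would organize the write-up so that the averaging/projection construction is invoked once and reused, keeping the measure-theoretic bookkeeping for the densities as the technical heart of the argument.
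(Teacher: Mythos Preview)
Your plan is sound and close in spirit to the paper's proof; both argue each implication by contraposition, and both pivot on identifying an invariant measure $\mu_1\ll\mu$ with a bounded density viewed as an element of the dual fixed space. The one place where you diverge from the paper, and where your write-up should be more explicit, is the passage between $\fix(\EuScript{S}_\mu)$ and $\fix(\EuScript{S}_\mu')$. Invariance of $\mu_1$ under $S'$ gives directly only $S_\mu' h = h$ in $\mathrm{L}^\infty$, not $S_\mu h = h$; and dually, to show that $g\mu$ is $\EuScript{S}$-invariant you need $S_\mu' g = g$, whereas you start from $S_\mu g = g$. You flag this as the delicate point but do not name the mechanism: it is justified by restricting the bi-Markov operators to $\mathrm{L}^2(K,\mu)$ and using that a Hilbert-space contraction satisfies $\fix(T)=\fix(T^*)$.

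The paper sidesteps this bridge in two different ways. For ``ergodic $\Rightarrow$ extreme'' it leaves $\tilde\mu_1$ in $\fix(\EuScript{S}_\mu')\subset\mathrm{L}^\infty$ and invokes mean ergodicity of $\EuScript{S}_\mu$ on $\mathrm{L}^1$ to conclude that $\fix(\EuScript{S}_\mu)$ separates $\fix(\EuScript{S}_\mu')$, so the latter is also one-dimensional. For ``not ergodic $\Rightarrow$ not extreme'' it first extracts a \emph{characteristic} function $\mathbbm{1}_A\in\fix(\EuScript{S}_\mu)$ (using that the fixed space is an AL-sublattice) and then proves invariance of the conditioned measures via the lattice inequality $\int_A g\,\mathrm{d}\mu=\int S_\mu(g\wedge\mathbbm{1}_A)\,\mathrm{d}\mu\le\int (Sg)\wedge\mathbbm{1}_A\,\mathrm{d}\mu=\int_A Sg\,\mathrm{d}\mu$ for $0\le g\le\mathbbm{1}$, never touching the adjoint. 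Your route is more uniform (one mechanism handles both directions); the paper's is more self-contained in that it never leaves $\mathrm{L}^1$ and uses only order arguments.
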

%
\color{black}
\begin{proof}
\color{black} Assume that $\fix(\EuScript{S}_\mu)$ is not one-dimensional. Since $\fix(\EuScript{S}_\mu)$ is an AL-sublattice of $\mathrm{L}^1(K,\mu)$ with weak order unit $\mathbbm{1}$, the set
\begin{align*}
B = \{f \in \fix(\EuScript{S}_\mu)\mid f \geq 0 \textrm{ and } \sup(f,\mathbbm{1}-f) = 0\}
\end{align*}
is total in $\fix(\EuScript{S}_\mu)$ (cf. page 115 of \cite{Scha1974}). But $B$ is just the set of characteristic functions in $\fix(\EuScript{S}_\mu)$. Thus there is a measurable set $A \subset K$ with $S_\mu\mathbbm{1}_{A}= \mathbbm{1}_A$ and $0 < \mu(A)<1$.
 Now consider the measures $\mu_1,\mu_2$ defined by
\begin{align*}
\mu_1(g) \defeq \frac{1}{\mu(A)}\int_A g \, \mathrm{d}\mu \textrm{ and } \mu_2(g) \defeq \frac{1}{1-\mu(A)}\int_{K\setminus A} g \, \mathrm{d}\mu
\end{align*}
for $g \in \mathrm{C}(K)$. \color{black} For every $g \in \mathrm{C}(K)$ with $0 \leq g \leq \mathbbm{1}$ and each $S \in \EuScript{S}$ we obtain
\begin{align*}
\int_A g \,\mathrm{d}\mu = \int g \wedge \mathbbm{1}_A \,\mathrm{d}\mu = \int S_\mu(g \wedge \mathbbm{1}_A) \,\mathrm{d}\mu \leq \int Sg \wedge \mathbbm{1}_A \,\mathrm{d}\mu = \int_A Sg \,\mathrm{d}\mu
\end{align*}
and, similarly \color{black}
\begin{align*}
\int_{A^c} g \,\mathrm{d}\mu \leq \int_{A^c} Sg \,\mathrm{d}\mu,
\end{align*}
which implies $\mu_i \in \mathrm{P}_\EuScript{S}(K)$ for $i=1,2$.
Moreover, 
\begin{align*}
\mu= \mu(A)\mu_1 + (1-\mu(A))\mu_2,
\end{align*}
so $\mu \notin \mathrm{ex}\,\mathrm{P}_{\EuScript{S}}(K)$.\smallskip\\
Conversely, take an ergodic measure $\mu \in \mathrm{P}_{\EuScript{S}}(K)$ and suppose that $\mu = \frac{1}{2}(\mu_1 + \mu_2)$ for some $\mu_1,\mu_2 \in \mathrm{P}_{\EuScript{S}}(K)$. Since
\begin{align*}
|\langle f,\mu_1\rangle | \leq 2\langle |f| , \mu \rangle \leq 2 \|f\|_{\mathrm{L}^1(K,\mu)}
\end{align*}
 for each $f \in \mathrm{C}(K)$ and $\mathrm{C}(K)$ is dense in $\mathrm{L}^1(K,\mu)$, we conclude that $\mu_1$ extends uniquely to a continuous functional $\tilde{\mu}_1 \in \mathrm{L}^{\infty}(K,\mu) = \mathrm{L}^1(K,\mu)'$. The semigroup $\EuScript{S}_\mu$ is mean ergodic (in the sense of Definition 8.31 of \cite{EFHN2015}) on $\mathrm{L}^1(K,\mu)$ (see Example 13.24 of \cite{EFHN2015}) and therefore $\fix(\EuScript{S}_{\mu})$ separates $\fix(\EuScript{S}_{\mu}')$ by Theorem 8.33 of \cite{EFHN2015}. Since $\fix(\EuScript{S}_{\mu})$ is one-dimensional, $\fix(\EuScript{S}_{\mu}')$ is also one-dimensional. \color{black} Consequently we obtain $\tilde{\mu}_1 = \mathbbm{1} \in \mathrm{L}^{\infty}(K,\mu)$ which implies $\mu_1 = \mu$. 
\end{proof}
\color{black} We are now ready to introduce primitive $\EuScript{S}$-ideals.
\begin{definition}
An $\EuScript{S}$-ideal $p$ is called \emph{primitive} if there is \color{black} an ergodic measure $\mu \in \mathrm{P}_{\EuScript{S}}(K)$ \color{black} with $p = I_{\mu}$.\\
The set of all primitive $\EuScript{S}$-ideals is called the \emph{primitive spectrum of $\EuScript{S}$}, denoted by $\mathrm{Prim}(\EuScript{S})$.
\end{definition}
\begin{remark}
The supports of primitive $\EuScript{S}$-ideals are precisely the supports of ergodic measures. Instead of looking at the ideal space it is therefore justified (and sometimes helpful) to see the primitive spectrum as a subset of the power set of $K$.
\end{remark}
\begin{remark}
If $\EuScript{S}\subset \mathscr{L}(\mathrm{C}(K))$ is irreducible, i.e., there are no non-trivial $\EuScript{S}$-ideals, then $\mathrm{Prim}(\EuScript{S})$ is a singleton. Other examples are given below (cf. Examples \ref{PrimExamples}).
\end{remark}
We need the following result which relates invariant measures for quotient systems to invariant measures on $K$.
\begin{proposition}\label{subsystem}
Let $L \subset K$ be the support of an $\EuScript{S}$-ideal and consider the semigroup $\EuScript{S}_{L}$ of Markov operators on $\mathrm{C}(L)$ induced by $\EuScript{S}$. The canonical continuous embedding 
\begin{align*}
i \colon \mathrm{C}(L)' \longrightarrow \mathrm{C}(K)'
\end{align*}
with $i(\mu)(f)\defeq \langle f|_L,\mu \rangle$ for each $f \in \mathrm{C}(K)$  and $\mu \in \mathrm{C}(L)'$ \color{black} restricts to continuous embeddings
\begin{align*}
&i \colon \mathrm{P}_{\EuScript{S}_L}(L) \longrightarrow \mathrm{P}_{\EuScript{S}}(K),\\
&i \colon \mathrm{ex}\,\mathrm{P}_{\EuScript{S}_L}(L)  \longrightarrow \mathrm{ex}\,\mathrm{P}_{\EuScript{S}}(K) 
\end{align*}
with
\begin{align*}
&i(\mathrm{P}_{\EuScript{S}_L}(L)) = \{\tilde{\mu} \in \mathrm{P}_{\EuScript{S}}(K) \mid \supp \tilde{\mu} \subset L\} \textrm{ and }\\
&i(\mathrm{ex}\,\mathrm{P}_{\EuScript{S}_L}(L)) = \{\tilde{\mu} \in \mathrm{ex}\,\mathrm{P}_{\EuScript{S}}(K) \mid \supp \tilde{\mu} \subset L\}.\color{black}
\end{align*}
\end{proposition}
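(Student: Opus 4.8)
The plan is to study the map $i$ in three stages — as a map between spaces of probability measures, then together with the semigroup actions, and finally on extreme points — and to reduce the statement about extreme points to the fact that the image of $i$ is a \emph{face}.

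First I would record the basic properties of $i$. The restriction map $\mathrm{C}(K)\to\mathrm{C}(L)$, $F\mapsto F|_L$, is a surjective (by Tietze's theorem) unital lattice homomorphism, and $i$ is precisely its adjoint; hence $i$ is linear, positive, injective and weak*-continuous, and $\langle\mathbbm{1},i(\mu)\rangle=\langle\mathbbm{1},\mu\rangle$. In particular $i$ maps $\mathrm{P}(L)$ into the probability measures on $K$, and since $i(\mu)(F)$ depends on $F$ only through $F|_L$, every $i(\mu)$ is supported in $L$. Conversely, a probability measure $\tilde\mu$ on $K$ with $\supp\tilde\mu\subset L$ pulls back to a well-defined $\mu\in\mathrm{P}(L)$ — two extensions of a given $g\in\mathrm{C}(L)$ differ by a function vanishing on $L$, hence $\tilde\mu$-almost everywhere — and $i(\mu)=\tilde\mu$. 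Thus $i$ is a weak*-continuous bijection of $\mathrm{P}(L)$ onto $\{\tilde\mu\in\mathrm{P}(K)\mid\supp\tilde\mu\subset L\}$, which is a homeomorphism onto its image because $\mathrm{P}(L)$ is weak*-compact and the target is Hausdorff.

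Next I would bring in the actions via the intertwining identity $S'\circ i=i\circ S_L'$, valid for every $S\in\EuScript{S}$. This follows directly from the definition of $S_L$: for $F\in\mathrm{C}(K)$ one may use $F$ itself as an extension of $F|_L$ — legitimate because $I_L$ is $\EuScript{S}$-invariant — so $S_L(F|_L)=(SF)|_L$, whence for every $F\in\mathrm{C}(K)$
\begin{align*}
\langle F,i(S_L'\mu)\rangle=\langle S_L(F|_L),\mu\rangle=\langle(SF)|_L,\mu\rangle=\langle F,S'i(\mu)\rangle .
\end{align*}
Since $i$ is injective, $\mu$ is $\EuScript{S}_L$-invariant if and only if $i(\mu)$ is $\EuScript{S}$-invariant. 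Combined with the first paragraph, and using that $\mathrm{P}_{\EuScript{S}_L}(L)$ is weak*-compact (being weak*-closed in $\mathrm{P}(L)$), this shows that $i$ restricts to a homeomorphism of $\mathrm{P}_{\EuScript{S}_L}(L)$ onto $\{\tilde\mu\in\mathrm{P}_{\EuScript{S}}(K)\mid\supp\tilde\mu\subset L\}$, which is the first image identity and the first embedding statement.

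For the extreme points, the key observation is that $F_L\defeq\{\tilde\mu\in\mathrm{P}_{\EuScript{S}}(K)\mid\supp\tilde\mu\subset L\}$ is a face of $\mathrm{P}_{\EuScript{S}}(K)$: if $\tilde\mu=t\nu_1+(1-t)\nu_2$ with $t\in(0,1)$ and $\nu_1,\nu_2\in\mathrm{P}_{\EuScript{S}}(K)$, then $\tilde\mu\geq t\nu_1$ as measures, so $\supp\tilde\mu\subset L$ forces $\nu_1(K\setminus L)=0$, i.e. $\supp\nu_1\subset L$, and likewise $\supp\nu_2\subset L$. Since $i$ is an affine homeomorphism of $\mathrm{P}_{\EuScript{S}_L}(L)$ onto $F_L$, it carries $\mathrm{ex}\,\mathrm{P}_{\EuScript{S}_L}(L)$ bijectively onto $\mathrm{ex}\,F_L$; and because $F_L$ is a face, $\mathrm{ex}\,F_L=\mathrm{ex}\,\mathrm{P}_{\EuScript{S}}(K)\cap F_L$. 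This yields the second image identity, and restricting the homeomorphism from the previous step to these subsets yields the second embedding statement.

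I do not expect a serious obstacle here; the two points that need genuine care are the well-definedness of pulling a measure on $K$ supported in $L$ back to a measure on $L$ (which rests on the surjectivity of the restriction map $\mathrm{C}(K)\to\mathrm{C}(L)$) and the verification that $F_L$ is a face, which is the real engine behind the extreme-point statement. I would also note that for the homeomorphism arguments only the weak*-compactness of $\mathrm{P}_{\EuScript{S}_L}(L)$ is needed, not any amenability of $\EuScript{S}_L$.
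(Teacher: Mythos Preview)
Your proof is correct and follows essentially the same route as the paper: the paper's short argument observes that if $i(\mu)=\tfrac{1}{2}(\tilde\mu_1+\tilde\mu_2)$ then $\supp\tilde\mu_i\subset\supp\mu\subset L$, so the $\tilde\mu_i$ restrict to $L$ and $\mu$ being extreme forces equality --- which is exactly your face argument carried out inline. Your treatment is more explicit (the intertwining identity, the compactness/Hausdorff homeomorphism step, and naming $F_L$ as a face), but the underlying idea is identical.
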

\begin{proof}
It is obvious that images of invariant measures remain invariant. Now assume that $\mu \in \mathrm{ex}\, \mathrm{P}_\EuScript{S}(L)$ and suppose that $i(\mu) = \frac{1}{2}(\tilde{\mu}_1 + \tilde{\mu}_2)$ for measures $\tilde{\mu}_1,\tilde{\mu}_2 \in \mathrm{P}_{\EuScript{S}}(K)$. Then $\supp \tilde{\mu}_i \subset \supp \mu$ for $i=1,2$ and therefore $\tilde{\mu}_1$ and $\tilde{\mu}_2$ restrict to measures $\mu_1, \mu_2 \in \mathrm{P}_{\EuScript{S}_L}(L)$ with $\mu = \frac{1}{2}(\mu_1 + \mu_2)$, so $\mu_1 = \mu = \mu_2$ since $\mu$ is ergodic. 
\end{proof}
\begin{corollary}\label{maxprim}
Each maximal $\EuScript{S}$-ideal is primitive.
\end{corollary}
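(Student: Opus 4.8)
The plan is to exhibit the maximal $\EuScript{S}$-ideal $I$ as the absolute kernel $I_\nu$ of an ergodic measure $\nu$, which I obtain by producing an invariant measure for the subsystem sitting on $L \defeq \supp I$ and then pushing it back to $K$. Recall that $L$ is a \emph{minimal} self-supporting set and, in particular, the support of an $\EuScript{S}$-ideal.

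First I would check that the induced semigroup $\EuScript{S}_L$ of Markov operators on $\mathrm{C}(L)$ is again right amenable: the restriction map $S \mapsto S_L$ is a surjective homomorphism $\EuScript{S} \to \EuScript{S}_L$ which is continuous for the strong operator topologies, and pushing a right invariant mean on $\mathrm{C}_{\mathrm{b}}(\EuScript{S})$ forward along this map yields a right invariant mean on $\mathrm{C}_{\mathrm{b}}(\EuScript{S}_L)$. Consequently $\mathrm{P}_{\EuScript{S}_L}(L)$ is a nonempty compact convex set, and the Krein--Milman theorem provides an extreme point $\mu \in \mathrm{ex}\,\mathrm{P}_{\EuScript{S}_L}(L)$; by Proposition \ref{ergodicity} this $\mu$ is ergodic.

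Next I would transport $\mu$ to $K$ via Proposition \ref{subsystem}: since $L$ is the support of an $\EuScript{S}$-ideal, the embedding $i$ restricts to a map $\mathrm{ex}\,\mathrm{P}_{\EuScript{S}_L}(L) \to \mathrm{ex}\,\mathrm{P}_{\EuScript{S}}(K)$, so $\nu \defeq i(\mu)$ is an extreme point of $\mathrm{P}_{\EuScript{S}}(K)$ with $\supp \nu \subset L$, and applying Proposition \ref{ergodicity} once more shows $\nu$ is ergodic. It then remains to identify $I_\nu$ with $I$. Because $\nu$ is invariant, $I_\nu$ is an $\EuScript{S}$-ideal, and it is proper since $\langle \mathbbm{1}, \nu \rangle = 1$. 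Moreover $\supp \nu \subset L$ forces $I = I_L \subset I_\nu$, for any $f$ with $f|_L = 0$ also vanishes on $\supp \nu$ and hence satisfies $\langle |f|, \nu \rangle = 0$. Maximality of $I$ now yields $I = I_\nu$, so $I$ is primitive.

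The only step that is not a direct combination of the quoted results with the definition of the absolute kernel is the first one, namely that right amenability descends to $\EuScript{S}_L$ so that $\mathrm{P}_{\EuScript{S}_L}(L)$ is nonempty. Alternatively one can bypass $\EuScript{S}_L$ entirely and build $\nu$ directly on $K$: fix $x \in L$, apply a right invariant mean of $\EuScript{S}$ to the bounded continuous function $S \mapsto \langle f, S'\delta_x \rangle$ to define a measure $\nu$, check invariance using right invariance of the mean, and use that $L$ is self-supporting to conclude $\supp \nu \subset L$; an extreme point of the (then nonempty) compact convex set of invariant measures with support in $L$ is automatically extreme in $\mathrm{P}_{\EuScript{S}}(K)$, hence ergodic, and the argument concludes as above.
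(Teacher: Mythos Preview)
Your proof is correct and follows essentially the same route as the paper: pass to the subsystem on $L=\supp I$, obtain an ergodic measure there, push it to $K$ via Proposition~\ref{subsystem}, and identify its absolute kernel with $I$. The only cosmetic difference is in the last step: the paper uses that $\EuScript{S}_I$ is irreducible (equivalent to maximality of $I$) to conclude that any $\EuScript{S}_I$-invariant measure has full support $L$, hence $I_\nu=I_L$ directly, whereas you argue $I_L\subset I_\nu$ and invoke maximality; these are the same observation phrased on different sides of the quotient. Your care in justifying that $\EuScript{S}_L$ is again right amenable (so that $\mathrm{P}_{\EuScript{S}_L}(L)\neq\emptyset$) fills in a point the paper leaves implicit.
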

\begin{proof}
Take a maximal $\EuScript{S}$-ideal $I =I_L$. Then the induced semigroup $\EuScript{S}_I$ on $\mathrm{C}(L)$ is irreducible and consequently every $\mathrm{S}_I$-invariant measure $\mu$ is strictly positive, i.e., $\supp \mu = L$.
 
\end{proof}
We give two simple examples showing that the converse of \cref{maxprim} does not hold.
\begin{example}\label{primax}
\begin{enumerate}[(i)]
\item If $K = \T \defeq \{z \in \C\mid |z| = 1\}$ and $\varphi(z) \defeq z^2$ for $z \in \T$, then the Haar measure of $\T$ is ergodic by Proposition 2.17 of \cite{EiWa2011}. However, $1 \in \T$ is a fixed point of $\varphi$ and therefore $I_{\{1\}}$ is a non-trivial $\EuScript{S}_\varphi$-ideal. Therefore, the zero ideal is a primitive, but not maximal $\EuScript{S}_\varphi$-ideal.
\item Consider $K = \{0,1\}^{\N}$ and $\varphi((x_n)_{n \in \N}) \defeq (x_{n+1})_{n \in \N}$ for $(x_n)_{n \in \N} \in K$. Clearly, $\varphi$ has fixed points whence the zero ideal is not a maximal $\EuScript{S}_\varphi$-ideal.\\
Let $\nu \defeq \frac{1}{2}(\delta_0 + \delta_1) \in \mathrm{C}(\{0,1\})'$. Then the product measure $\mu \defeq \prod_{n \in \N} \nu \in \mathrm{C}(K)'$ on $K$ is ergodic by Proposition 6.20 of \cite{EFHN2015} and has full support. Therefore $I_{\mu} = \{0\}$ is a primitive $\EuScript{S}_\varphi$-ideal.
\end{enumerate}
\end{example}
\begin{remark}
In view of Examples \ref{primax} considering all primitive ideals instead of maximal ideals yields more information on the semigroup action.
\end{remark}
\color{black}
\section{Radical Ideals}
\label{sec:3}
The Jacobson topology on the primitive spectrum of C*-algebras can be defined using the notions of \emph{hull} and \emph{kernel} (see Section 4.3 of \cite{Pede1979} or Section II.6.5 of \cite{Blac2006}). In our context they also yield a natural correspondence between closed subsets of $\mathrm{Prim}(\EuScript{S})$ and so-called \emph{radical $\EuScript{S}$-ideals}.
\begin{definition}\label{hullker}
For subsets $A \subset \mathrm{Prim}(\EuScript{S})$ and $I \subset \mathrm{C}(K)$ we set
\begin{align*}
\ker(A)&\defeq\bigcap_{p \in A} p,\\
\mathrm{hull}(I)&\defeq \left\{p \in \mathrm{Prim}(\EuScript{S})\mid I \subset p\right\}.
\end{align*}
\begin{enumerate}[(i)]
\item For a subset $I \subset \mathrm{C}(K)$ the \emph{$\EuScript{S}$-radical} of $I$ is 
\begin{align*}
\mathrm{rad}_{\EuScript{S}}(I):= \mathrm{ker}(\mathrm{hull}(I)) )=  \bigcap_{\substack{p \in \mathrm{Prim}(\EuScript{S})\\I\subset p}}p.
\end{align*}
\item An $\EuScript{S}$-ideal $I$ is a \emph{radical $\EuScript{S}$-ideal} if $I = \mathrm{rad}_{\EuScript{S}}(I)$.
\item The semigroup $\EuScript{S}$ is \emph{radical free} if the zero ideal is a radical $\EuScript{S}$-ideal, i.e., if $\mathrm{rad}_{\EuScript{S}}(0)=0$.
\end{enumerate}
\end{definition}
We denote the set of all radical $\EuScript{S}$-ideals by $\mathrm{Rad}(\EuScript{S})$.
\begin{remark}
\color{black} We point out that our definition of a radical free semigroup does not coincide with the one of Schaefer (using maximal $\EuScript{S}$-ideals, see \cite{Scha1968}). Every radical free semigroup in the sense of Schaefer is also radical free in our terminology. However, the converse does not hold (see Examples \ref{primax}).\color{black}
\end{remark}
\begin{remark}\label{capinv}
\color{black} By the Krein-Milman theorem $\mathrm{P}_{\EuScript{S}}(K)$ is the closed convex hull of $\mathrm{ex}\,\mathrm{P}_{\EuScript{S}}(K)$ with respect to the weak* topology and therefore
\begin{align*}
\mathrm{rad}_{\EuScript{S}}(0) = \bigcap_{\mu \in \mathrm{P}_{\EuScript{S}}(K)} I_\mu.
\end{align*}	
\color{black} 
\end{remark}
\begin{remark}
The $\EuScript{S}$-radical of a subset $I \subset \mathrm{C}(K)$ is either $\mathrm{C}(K)$ or \color{black} a radical $\EuScript{S}$-ideal\color{black}. Moreover, we always have $\mathrm{hull}(I) = \mathrm{hull}(\mathrm{rad}_{\EuScript{S}}(I))$.
\end{remark}
\begin{remark}

Just as primitive ideals correspond to the supports of ergodic measures, radical ideals correspond to the closures of unions of supports of ergodic measures. Therefore \color{black} $\EuScript{S}$ is radical free if and only if the union of all supports of invariant ergodic measures is dense in $K$. Note that the latter set is \color{black} not closed in general \color{black} (see Example \ref{PrimExamples} (iii) below). For the Markov semigroup induced by the right shift on $K = \beta \N\setminus \N$ this set is nowhere dense (cf. Corollary 1.5 in \cite{Chou1967}).\\
%
\end{remark}
We need the following \color{black} result \color{black} which relates radical and primitive ideals of quotient systems to the corresponding $\EuScript{S}$-ideals of $\mathrm{C}(K)$.\color{black} 
\begin{proposition}\label{lemmaquotient}
\color{black}Let $I =I_L\subset \mathrm{C}(K)$ be an $\EuScript{S}$-ideal and $\EuScript{S}_I$ the semigroup of Markov operators on $\mathrm{C}(L)$ induced by $\EuScript{S}$. Then the mappings 
\begin{align*}
&\{p \in \mathrm{Prim}(\EuScript{S})\mid I \subseteq p\} \longrightarrow \mathrm{Prim}(\EuScript{S}_I),\quad 
p \mapsto p|_L,\\
&\{J \in \mathrm{Rad}(\EuScript{S})\mid I \subseteq J\} \longrightarrow \mathrm{Rad}(\EuScript{S}_I),\quad 
J \mapsto J|_L,
\end{align*}
where $J|_L \defeq \{f|_L \mid f \in J\}$ for $J \subset \mathrm{C}(K)$, are inclusion preserving bijections  with inclusion preserving inverses\color{black}. Moreover, \color{black} $\mathrm{rad}_{\EuScript{S}_I}(0) = \mathrm{rad}_{\EuScript{S}}(I)|_L$.\color{black}
\end{proposition}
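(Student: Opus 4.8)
The plan is to transfer everything through the bijection between $\EuScript{S}$-ideals and self-supporting sets, with Proposition~\ref{subsystem} as the only non-formal ingredient. First I would set up a ``restriction dictionary''. If $J$ is an $\EuScript{S}$-ideal with $I \subseteq J$, write $J = I_M$; then $M \subseteq L$, and since $S'\delta_x$ is carried by $L$ for every $x \in L$, the closed set $M$ is self-supporting for $\EuScript{S}$ if and only if it is self-supporting for $\EuScript{S}_I$. Using Tietze's extension theorem one gets $J|_L = \{\,g \in \mathrm{C}(L)\mid g|_M = 0\,\}$, i.e.\ $J|_L$ is precisely the $\EuScript{S}_I$-ideal with support $M$, and conversely every $\EuScript{S}_I$-ideal has this form. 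So $J \mapsto J|_L$ is a bijection from the set of $\EuScript{S}$-ideals containing $I$ onto the set of all $\EuScript{S}_I$-ideals; it is inclusion preserving with inclusion-preserving inverse because $J_1 \subseteq J_2 \iff \supp J_2 \subseteq \supp J_1 \iff J_1|_L \subseteq J_2|_L$. It then remains to check that this bijection restricts to bijections between the primitive (resp.\ radical) $\EuScript{S}$-ideals containing $I$ and the primitive (resp.\ radical) $\EuScript{S}_I$-ideals.

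For the primitive ideals, let $p = I_\mu$ with $\mu$ ergodic and $I \subseteq p$, which by the dictionary means $\supp \mu \subseteq L$. By Proposition~\ref{subsystem} and Proposition~\ref{ergodicity} there is a unique ergodic $\nu \in \mathrm{ex}\,\mathrm{P}_{\EuScript{S}_I}(L)$ with $\mu = i(\nu)$, and a short computation with absolute kernels gives $p|_L = \{\,g \in \mathrm{C}(L) \mid \langle |g|, \nu\rangle = 0\,\} = I_\nu$, a primitive $\EuScript{S}_I$-ideal. Running this argument backwards --- starting from an ergodic measure on $L$ and pushing it forward by $i$ --- shows that every primitive $\EuScript{S}_I$-ideal is hit, while injectivity is already contained in the first paragraph. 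Hence $p \mapsto p|_L$ restricts to the asserted bijection $\{p \in \mathrm{Prim}(\EuScript{S}) \mid I \subseteq p\} \to \mathrm{Prim}(\EuScript{S}_I)$.

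For the radical ideals, the point to isolate is that restriction commutes with intersections of $\EuScript{S}$-ideals that all contain $I$: if each $J_\alpha \supseteq I$, then $\big(\bigcap_\alpha J_\alpha\big)|_L = \bigcap_\alpha (J_\alpha|_L)$, since with $J_\alpha = I_{M_\alpha}$ both sides equal $\{\,g \in \mathrm{C}(L) \mid g|_{\bigcup_\alpha M_\alpha} = 0\,\}$ --- for the non-obvious inclusion one extends such a $g$ to $K$ and notes that the extension vanishes on $\overline{\bigcup_\alpha M_\alpha} = \supp\big(\bigcap_\alpha J_\alpha\big)$. Since a radical $\EuScript{S}$-ideal $J \supseteq I$ equals $\bigcap_{p \in \mathrm{hull}(J)} p$, an intersection of primitive ideals all containing $I$, combining this with the previous paragraph gives $J|_L = \mathrm{rad}_{\EuScript{S}_I}(J|_L)$, so $J|_L$ is radical. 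Conversely an intersection of primitive $\EuScript{S}$-ideals is automatically radical, so the preimage of a radical $\EuScript{S}_I$-ideal under the primitive-ideal bijection is a radical $\EuScript{S}$-ideal containing $I$, uniquely determined by injectivity. This yields the bijection $\mathrm{Rad}(\EuScript{S}_I) \leftrightarrow \{J \in \mathrm{Rad}(\EuScript{S})\mid I \subseteq J\}$, monotone in both directions as a restriction of the first bijection. Applying the intersection formula to the family $\mathrm{hull}(I)$ finally gives
\[
\mathrm{rad}_{\EuScript{S}}(I)|_L \;=\; \Big(\bigcap_{p \in \mathrm{hull}(I)} p\Big)\Big|_L \;=\; \bigcap_{p \in \mathrm{hull}(I)} (p|_L) \;=\; \bigcap_{q \in \mathrm{Prim}(\EuScript{S}_I)} q \;=\; \mathrm{rad}_{\EuScript{S}_I}(0),
\]
which is the ``moreover'' part.

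The main obstacle is bookkeeping rather than a deep idea: one has to make sure that $J|_L$ is genuinely a closed $\EuScript{S}_I$-ideal with the expected support (here Tietze's theorem and the identity $I_M = I_{\overline{M}}$ enter), that ``self-supporting subset of $L$'' does not depend on whether one works with $\EuScript{S}$ or $\EuScript{S}_I$, and that restriction is compatible with arbitrary intersections of ideals containing $I$. Everything structural is concentrated in Proposition~\ref{subsystem}, which identifies the invariant and the ergodic measures carried by $L$ with those of the induced system on $\mathrm{C}(L)$.
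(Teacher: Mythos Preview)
Your proof is correct and follows essentially the same approach as the paper. The paper phrases the basic bijection via the projection $P\colon \mathrm{C}(K)\to\mathrm{C}(L)$ and its preimage, while you phrase it via the support dictionary and Tietze, but these are equivalent; both then invoke Proposition~\ref{subsystem} for the primitive case, establish that restriction commutes with intersections, and deduce the radical case. The only cosmetic difference is in the ``moreover'' clause: the paper argues that $\mathrm{rad}_{\EuScript{S}_I}(0)$ and $\mathrm{rad}_{\EuScript{S}}(I)|_L$ are each the smallest element on their respective sides of an order-isomorphism, whereas you compute directly with the intersection formula---either way is immediate.
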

\begin{proof}
\color{black} We first recall that the \color{black} natural projection $P\colon \mathrm{C}(K)\longrightarrow \mathrm{C}(L)$ is a \color{black} surjetive Banach lattice homomorphism\color{black}. Thus, if $\tilde{J} \subset \mathrm{C}(L)$ is a closed ideal, then $J \defeq P^{-1}(\tilde{J})$ is a closed ideal containing $I$ with $\tilde{J} = P(P^{-1}(\tilde{J})) = J|_L$. It is readily checked that $J$ is the unique closed ideal $H$ containing $I$ with $H|_L = \tilde{J}$\color{black}. Clearly $\tilde{J}$ is $\EuScript{S}_I$-invariant if and only if $J$ is $\EuScript{S}$-invariant.\\
We therefore obtain mutually inverse and inclusion preserving mappings
\begin{align*}
\{J \subset \mathrm{C}(K)\mid J \,\EuScript{S}\textrm{-ideal with } I \subseteq J\} &\leftrightarrow \{\tilde{J} \subset \mathrm{C}(L)\mid \tilde{J}\, \EuScript{S}_I\textrm{-ideal}\},\\
J &\mapsto J|_L\\
P^{-1}(\tilde{J}) &\mapsfrom \tilde{J}.
\end{align*}
\color{black} We now prove that
\begin{align*}
\{p \in \mathrm{Prim}(\EuScript{S})\mid I \subseteq p\} \longrightarrow \mathrm{Prim}(\EuScript{S}_I),\quad 
p \mapsto p|_L
\end{align*}
is a bijective map. Assume \color{black} that $I \subset J = I_\mu$ for some $\mu \in \mathrm{ex}\, \mathrm{P}\color{black}_\EuScript{S}(K)$. Then $\supp \mu \subset L$ and we thus find $\nu\in \mathrm{ex}\,\mathrm{P}_{\EuScript{S}_I}(L)$ with $i(\nu) = \mu$ (see \color{black} Proposition \ref{subsystem}). Moreover we obtain for every $f \in \mathrm{C}(L)$ that
\begin{align}\label{(1)}
\langle |f|,\nu \rangle = \int_L |F| \,\mathrm{d}\mu,
\end{align}
for each extension $F \in \mathrm{C}(K)$ of $f$ to $K$. Thus $f \in I_{\nu}$ if and only if $f \in I_\mu|_L$. If, on the other hand, $\tilde{J} = I_{\nu}$ for some $\nu\in \mathrm{ex}\,\mathrm{P}_{\EuScript{S}_I}(L)$, then Equation \color{black} (\ref{(1)}) \color{black} holds for $\mu = i(\nu)$ and thus $\tilde{J} = I_{\mu}|_L$.\\
Before proceeding with the remaining assertions, we make the following two observations.  
\begin{itemize}
\item For a familiy $(J_\alpha)_{\alpha \in A}$ of $\EuScript{S}$-ideals with $I \subset J_\alpha$ for every $\alpha \in A$
\begin{align*}
\left(\bigcap_{\alpha \in A}J_\alpha\right)|_L = \bigcap_{\alpha \in A}(J_\alpha|_L).
\end{align*}
\item For two $\EuScript{S}$-ideals $J_1, J_2$ with $I \subset J_1,J_2$ the inclusion $J_1|_L \subset J_2|_L$ implies $J_1 \subset J_2$.
\end{itemize}
\color{black} We use these facts to show that
\begin{align*}
\{J \in \mathrm{Rad}(\EuScript{S})\mid I \subseteq J\} \longrightarrow \mathrm{Rad}(\EuScript{S}_I),\quad 
J \mapsto J|_L
\end{align*}
is a well-defined bijection. Take \color{black} an $\EuScript{S}$-ideal $J \subset \mathrm{C}(K)$ with $I \subset J$. Then $J$ is radical if and only if 
\begin{align*}
J = \bigcap_{\substack{p \in \mathrm{Prim}(\EuScript{S})\\ J \subset p}} p 
\end{align*}
which is---\color{black}by the observations above---\color{black}equivalent to 
\begin{align*}
J|_L = \bigcap_{\substack{p \in \mathrm{Prim}(\EuScript{S})\\ J \subset p}} p|_L = \bigcap_{\substack{p \in \mathrm{Prim}(\EuScript{S})\\ J|_L \subset p|_L}} p|_L = \bigcap_{\substack{p \in \mathrm{Prim}(\EuScript{S}_I)\\ J|_L \subset p}} p,
\end{align*}
i.e., $J|_L$ being a radical $\EuScript{S}_I$-ideal.\\
\color{black} Finally, the identity $\mathrm{rad}_{\EuScript{S}_I}(0)= \mathrm{rad}_{\EuScript{S}}(I)|_L$ follows \color{black} from the fact that \color{black} $\mathrm{rad}_{\EuScript{S}_I}(0)$ \color{black} is the smallest radical $\EuScript{S}_I$-ideal and $\mathrm{rad}_{\EuScript{S}}(I)$ is the smallest radical $\EuScript{S}$-ideal containing $I$. 
\end{proof}
\color{black}Our main class of examples for radical ideals are the absolute kernels of (possibly non-ergodic) invariant measures. The following result generalizes Proposition 12 of \cite{Scha1968} using similar arguments.
\begin{proposition}\label{radid1}
The following assertions are valid.
\begin{enumerate}[(i)]
\item For each $\mu \in \mathrm{P}_{\EuScript{S}}(K)$ the $\EuScript{S}$-ideal $I_\mu$ is a radical $\EuScript{S}$-ideal.
\item If $K$ is metrizable, then for each radical $\EuScript{S}$-ideal $I$ there is $\mu \in \mathrm{P}_{\EuScript{S}}(K)$ with $I=I_\mu$.
\end{enumerate}
\end{proposition}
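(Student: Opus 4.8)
The plan is to reduce both statements to the semigroup induced on the support of the ideal, where the relevant measure becomes strictly positive, and then to feed this into Proposition~\ref{subsystem}, Proposition~\ref{lemmaquotient} and Remark~\ref{capinv}.

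For (i), set $I \defeq I_\mu$, let $L \defeq \supp \mu$ (which equals $\supp I$) and pass to the induced Markov semigroup $\EuScript{S}_I$ on $\mathrm{C}(L)$. By Proposition~\ref{subsystem} the measure $\mu$ is the image $i(\mu_L)$ of an invariant probability measure $\mu_L \in \mathrm{P}_{\EuScript{S}_I}(L)$, and since $L = \supp \mu$ this $\mu_L$ has full support, so $I_{\mu_L} = \{0\}$ in $\mathrm{C}(L)$. Now Proposition~\ref{lemmaquotient} gives $\mathrm{rad}_{\EuScript{S}_I}(0) = \mathrm{rad}_{\EuScript{S}}(I)|_L$, Remark~\ref{capinv} applied to $\EuScript{S}_I$ gives $\mathrm{rad}_{\EuScript{S}_I}(0) = \bigcap_{\nu \in \mathrm{P}_{\EuScript{S}_I}(L)} I_\nu$, and this intersection is contained in $I_{\mu_L} = \{0\}$. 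Hence $\mathrm{rad}_{\EuScript{S}}(I)|_L = \{0\} = I|_L$, and since $J \mapsto J|_L$ is injective on $\EuScript{S}$-ideals containing $I$ (Proposition~\ref{lemmaquotient}), we conclude $\mathrm{rad}_{\EuScript{S}}(I) = I$. (Note that $\mathrm{hull}(I)\neq\emptyset$ here, since $\mathrm{P}_{\EuScript{S}_I}(L)$ is a nonempty compact convex set and thus has an extreme point, which by Proposition~\ref{ergodicity} and Proposition~\ref{subsystem} yields a primitive $\EuScript{S}$-ideal containing $I$; so $\mathrm{rad}_{\EuScript{S}}(I)$ really is a radical $\EuScript{S}$-ideal and not $\mathrm{C}(K)$.)

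For (ii), write $I = I_L$ with $L \defeq \supp I$, which is now compact metrizable, hence second countable; fix a countable base $(U_n)_{n \in \N}$ of nonempty open subsets of $L$. Since $I$ is radical, Proposition~\ref{lemmaquotient} gives $\mathrm{rad}_{\EuScript{S}_I}(0) = \mathrm{rad}_{\EuScript{S}}(I)|_L = I|_L = \{0\}$, so Remark~\ref{capinv} applied to $\EuScript{S}_I$ yields $\bigcap_{\nu \in \mathrm{P}_{\EuScript{S}_I}(L)} I_\nu = \{0\}$. In particular, for every nonempty open $U \subseteq L$ there is some $\nu \in \mathrm{P}_{\EuScript{S}_I}(L)$ with $\nu(U) > 0$, for otherwise any nonzero $f \in \mathrm{C}(L)$ with support in $U$ would belong to $\bigcap_{\nu} I_\nu$. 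Choose such measures $\nu_n \in \mathrm{P}_{\EuScript{S}_I}(L)$ with $\nu_n(U_n) > 0$ and set $\mu \defeq \sum_{n \in \N} 2^{-n}\nu_n$. After checking that this norm-convergent series is again a probability measure fixed by every $S'$ --- hence lies in the weak*-closed convex set $\mathrm{P}_{\EuScript{S}_I}(L)$ --- we have $\mu(U_n) \geq 2^{-n}\nu_n(U_n) > 0$ for all $n$, so $\supp \mu = L$. Transporting $\mu$ to $K$ via the embedding $i$ of Proposition~\ref{subsystem} gives an invariant probability measure on $K$ with $I_{i(\mu)} = I_L = I$.

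The remaining points are routine: that $\sum_{n} 2^{-n}\nu_n$ is invariant and a probability measure follows at once from norm convergence and continuity of each $S'$, and the injectivity of restriction to $L$ is part of Proposition~\ref{lemmaquotient}. The one place where the hypothesis is genuinely used --- and the only step I expect to cause difficulty --- is part (ii): metrizability is precisely what makes $L$ second countable and lets us amalgamate countably many invariant measures into one of full support while staying inside $\mathrm{P}_{\EuScript{S}_I}(L)$. Without it no such $\mu$ need exist, so the restriction to metrizable $K$ is essential and not merely technical.
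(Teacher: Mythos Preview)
Your proof is correct and follows essentially the same route as the paper: for (i) you reduce to the support via Proposition~\ref{lemmaquotient} and then invoke Remark~\ref{capinv} to see the induced semigroup is radical free, and for (ii) you pass to $L=\supp I$, use a countable base and build a fully supported invariant measure as a convergent convex series $\sum 2^{-n}\nu_n$. The only cosmetic difference is that the paper picks the $\nu_n$ to be ergodic whereas you take arbitrary invariant measures; both choices work for the same reason.
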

\begin{proof}
For (i) let $\mu \in \mathrm{P}_{\EuScript{S}}(K)$. By Lemma \ref{lemmaquotient} we may assume that $K = \supp \mu$ and it then suffices to show that
$\EuScript{S}$ is radical free. But this directly follows from Remark \ref{capinv}.\smallskip\\
\color{black} We now prove (ii) and assume \color{black} that $K$ is metrizable and $I$ \color{black} is \color{black} a radical $\EuScript{S}$-ideal. We may assume that $I = 0$ (otherwise we pass to $\mathrm{C}(\supp I)$, cf. Lemma \ref{lemmaquotient}). Take a countable base of the topology consisting of nonempty open sets $U_n$, $n \in \N$. Since the supports of ergodic measures are dense in $K$ we find $\mu_n \in \mathrm{ex}\,\mathrm{P}_{\EuScript{S}}(K)$ with $\supp \mu_n \cap U_n \neq \emptyset$ for each $n \in \N$. For
\begin{align*}
\mu \defeq \sum_{n=1}^\infty 2^{-n}\mu_n \in \mathrm{P}_{\EuScript{S}}(K)
\end{align*}
we obtain $\mu(U_n) > 0$ for each $n \in \N$, hence $\mu(U) > 0$ for each non-empty open set $U \subset K$. 
\end{proof}
\begin{remark}\label{nondynamical}
\color{black} Taking $\EuScript{S} = \{\mathrm{Id}\}$ in Proposition \ref{radid1} (ii) yields the probably well-known fact that every compact metric space has a fully supported regular Borel probability measure.\footnotemark[1]\color{black}
\end{remark}
\color{black} The following examples show that part (ii) of Proposition \ref{radid1} is wrong in the non-metric case.\color{black}
\begin{example}\label{nonmetrizable}
\begin{enumerate}[(i)]
\item \color{black}If $K = \Omega \cup \{\infty\}$ is the one-point compactification of an uncountable discrete space $\Omega$, then $\mathrm{C}(K)'$ can be identified with $\ell^1(K)$. Thus there is no fully supported probability measure $\mu \in \mathrm{C}(K)'$.\footnotemark[1]
\footnotetext[1]{\color{black} Remark \ref{nondynamical} and Example \ref{nonmetrizable} (i) were kindly suggested by the referee.\color{black}}
\item \color{black}If $K = \beta \N\setminus \N$ and $\EuScript{S}_\varphi$ is the Markov semigroup induced by the right shift $\varphi$, then 
\begin{align*}
\bigcap_{n \in \N} I_{\mu_n} \not\subset \mathrm{rad}_{\EuScript{S}_\varphi}(0) 
\end{align*}
for every sequence of probability measures $(\mu_n)_{n \in \N} \subset \mathrm{C}(K)'$ (see Corollary 1.10 of \cite{Chou1967}).\color{black}
\end{enumerate}
\end{example}
\section{Centers of Attraction}
\label{sec:4}
Radical $\EuScript{S}$-ideals can also be described via an ergodic stability condition. To formulate our theorem we  write $\overline{\mathrm{co}}\, \EuScript{S}$ for the closed convex hull of $\EuScript{S}$ with respect to the strong operator topology and recall that a net $(T_{\alpha})_{\alpha \in A} \subset \overline{\mathrm{co}}\, \EuScript{S} \subset \mathscr{L}(\mathrm{C}(K))$ \color{black} of operators is \emph{right ergodic} if
\begin{align*} 
\lim_\alpha T_\alpha(\mathrm{Id}-S) = 0
\end{align*}
for each $S \in \EuScript{S}$ with respect to the strong operator topology. We note that there always are right ergodic operator nets for $\EuScript{S}$ (see Corollary 1.5 of \cite{Schr2013a}). \color{black} We give some examples (see Examples 1.2 of \cite{Schr2013a}).
\begin{example}
\begin{enumerate}[(i)]
\item If $\EuScript{S} = \{S^n \mid n \in \N_0\}$ for some Markov operator $S \in \mathscr{L}(\mathrm{C}(K))$, then the \emph{Ces\`{a}ro means}
	\begin{align*}
		C_N \defeq \frac{1}{N}\sum_{n=0}^{N-1} S^n \textrm{ for } N \in \N
	\end{align*}
	define a right ergodic operator sequence $(C_N)_{N \in \N}$ for $\EuScript{S}$. Likewise, the net of \emph{Abel means} $(A_r)_{r \in (0,1)}$ defined by
	\begin{align*}
		A_r \defeq (1-r) \sum_{n=0}^\infty (rS)^n \textrm{ for } r \in (0,1)
	\end{align*}
	is right ergodic for $\EuScript{S}$.
\item If $\EuScript{S} = \{S(t)\mid t \geq 0\}$ is a strongly continuous one-parameter semigroup of Markov operators on $\mathrm{C}(K)$, then the \emph{Ces\`{a}ro means}
	\begin{align*}
		C_Tf \defeq \frac{1}{T}\int_0^T S(t)f\, \mathrm{d}t \textrm{ for } f \in \mathrm{C}(K) \textrm{ and } T > 0
	\end{align*}
	define a right ergodic operator net $(C_T)_{T > 0}$ for $\EuScript{S}$.
\end{enumerate}
\end{example}
\color{black}
The following result generalizes Theorem 4 of \cite{Scha1968}.
\begin{theorem}\label{maintheorem}
For each support $L \subset K$ of an $\EuScript{S}$-ideal
\begin{align*}
\mathrm{rad}_{\EuScript{S}}(I_L) &= \left\{f \in \mathrm{C}(K)\mmid \lim_{\alpha}\int_L T_\alpha |f|\,\mathrm{d}\mu  = 0 \textrm{ for each } \mu \in \mathrm{C}(L)'\right\}\\
&= \left\{f \in \mathrm{C}(K)\mmid \lim_{\alpha} (T_\alpha |f|)|_L = 0 \textrm{ in the norm of } \mathrm{C}(L)\right\}
\end{align*}
where $(T_\alpha)_{\alpha \in A}$ is any right ergodic operator net for $\EuScript{S}$.\\ 
In particular, if $(T_\alpha)_{\alpha \in A}$ is any right ergodic operator net for $\EuScript{S}$, then an $\EuScript{S}$-ideal $I_L$ is a radical $\EuScript{S}$-ideal if and only if every $f \in \mathrm{C}(K)$ satisfying
\begin{align*}
\lim_{\alpha} (T_\alpha |f|)|_L = 0
\end{align*}
vanishes on $L$.
\end{theorem}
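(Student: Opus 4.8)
The plan is to reduce the statement to the case $L = K$ by passing to the quotient system, and then to identify the three sets through a short cycle of inclusions whose only non-formal link is a weak*-compactness argument. For the reduction, set $I = I_L$, let $P\colon\mathrm{C}(K)\to\mathrm{C}(L)$ be the restriction homomorphism and let $\EuScript{S}_I$ be the induced semigroup on $\mathrm{C}(L)$. By Proposition~\ref{lemmaquotient}, $\mathrm{rad}_{\EuScript{S}_I}(0) = \mathrm{rad}_{\EuScript{S}}(I_L)|_L$, and the bijection provided there identifies $\mathrm{rad}_{\EuScript{S}}(I_L)$ as $P^{-1}\bigl(\mathrm{rad}_{\EuScript{S}_I}(0)\bigr)$. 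Now $\overline{\mathrm{co}}\,\EuScript{S}$ preserves the $\EuScript{S}$-ideal $I_L$, so any right ergodic net $(T_\alpha)_{\alpha\in A}$ for $\EuScript{S}$ induces an operator net $\bigl((T_\alpha)_I\bigr)_{\alpha\in A}$ on $\mathrm{C}(L)$; using that $T\mapsto T_I$ is linear and strong operator continuous together with the identity $(T_\alpha)_I(\mathrm{Id}-S_I) = \bigl(T_\alpha(\mathrm{Id}-S)\bigr)_I$, one checks that this induced net is right ergodic for $\EuScript{S}_I$ and that $(T_\alpha|f|)|_L = (T_\alpha)_I\bigl((|f|)|_L\bigr)$ for $f\in\mathrm{C}(K)$. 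Hence the two sets on the right-hand side of the claimed identity depend on $f$ only through $Pf$ and coincide with $P^{-1}$ of the corresponding sets for $\EuScript{S}_I$, so it suffices to prove the theorem when $L = K$: for a right ergodic net $(T_\alpha)$ of $\EuScript{S}$ on $\mathrm{C}(K)$ I must show
\begin{align*}
\mathrm{rad}_{\EuScript{S}}(0) = \bigl\{f\mid \lim_\alpha\langle T_\alpha|f|,\mu\rangle = 0\ \textrm{for all }\mu\in\mathrm{C}(K)'\bigr\} = \bigl\{f\mid \lim_\alpha\|T_\alpha|f|\|_{\mathrm{C}(K)} = 0\bigr\},
\end{align*}
where $\mathrm{rad}_{\EuScript{S}}(0) = \bigcap_{\mu\in\mathrm{P}_{\EuScript{S}}(K)}I_\mu$ by Remark~\ref{capinv}. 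Denote the three sets by $A$, $B$ and $C$.

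The inclusion $C\subseteq B$ is immediate from $|\langle T_\alpha|f|,\mu\rangle|\leq\|T_\alpha|f|\|_{\mathrm{C}(K)}\,\|\mu\|$. For $B\subseteq A$, observe that every operator in $\overline{\mathrm{co}}\,\EuScript{S}$ fixes each $\mu\in\mathrm{P}_{\EuScript{S}}(K)$ (take convex combinations and weak* limits of the relations $S'\mu = \mu$), so $\langle T_\alpha|f|,\mu\rangle = \langle|f|,T_\alpha'\mu\rangle = \langle|f|,\mu\rangle$ is independent of $\alpha$; hence $f\in B$ forces $\langle|f|,\mu\rangle = 0$ for every $\mu\in\mathrm{P}_{\EuScript{S}}(K)$, i.e.\ $f\in A$. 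The remaining inclusion $A\subseteq C$ is the heart of the proof. Assume $f\in A$ but $T_\alpha|f|\not\to 0$ in $\mathrm{C}(K)$; then there are $\varepsilon>0$, a subnet $(\alpha_j)$ and points $x_j\in K$ with $(T_{\alpha_j}|f|)(x_j)\geq\varepsilon$. The measures $\nu_j\defeq T_{\alpha_j}'\delta_{x_j}$ are probability measures since $T_{\alpha_j}$ is Markov, so by weak*-compactness of $\mathrm{P}(K)$ we may pass to a subnet with $\nu_j\to\nu$, giving $\langle|f|,\nu\rangle\geq\varepsilon$. The decisive point is that $\nu$ is $\EuScript{S}$-invariant: for $h\in\mathrm{C}(K)$ and $S\in\EuScript{S}$,
\begin{align*}
\langle h-Sh,\nu_j\rangle = \langle T_{\alpha_j}(h-Sh),\delta_{x_j}\rangle = \bigl(T_{\alpha_j}(\mathrm{Id}-S)h\bigr)(x_j),
\end{align*}
whose modulus is at most $\|T_{\alpha_j}(\mathrm{Id}-S)h\|_{\mathrm{C}(K)}\to 0$ by right ergodicity; letting $j\to\infty$ yields $\langle h-Sh,\nu\rangle = 0$ for all $h$, hence $S'\nu = \nu$. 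Thus $\nu\in\mathrm{P}_{\EuScript{S}}(K)$, so $f\in A\subseteq I_\nu$ forces $\langle|f|,\nu\rangle = 0$, a contradiction. Therefore $A = B = C$, and undoing the reduction gives the two displayed descriptions of $\mathrm{rad}_{\EuScript{S}}(I_L)$.

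The "in particular" clause is then purely formal: one always has $I_L\subseteq\mathrm{rad}_{\EuScript{S}}(I_L)$, so $I_L$ is a radical $\EuScript{S}$-ideal exactly when $\mathrm{rad}_{\EuScript{S}}(I_L)\subseteq I_L$, which by the identity just proved means precisely that every $f\in\mathrm{C}(K)$ with $\lim_\alpha(T_\alpha|f|)|_L = 0$ lies in $I_L$, i.e.\ vanishes on $L$. I expect the real obstacle to be the inclusion $\mathrm{rad}_{\EuScript{S}}(0)\subseteq C$: one must manufacture, out of push-forwards of point masses along the ergodic net, a genuine $\EuScript{S}$-invariant probability measure that still detects $|f|$, and this is exactly where both the compactness of $\mathrm{P}(K)$ (to obtain a cluster point at all) and the defining property of a right ergodic net (to force that cluster point to be invariant) are indispensable; everything else is the quotient bookkeeping and two elementary estimates.
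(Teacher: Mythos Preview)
Your argument is correct. The reduction to $L=K$ via Proposition~\ref{lemmaquotient} and the inclusions $C\subseteq B\subseteq A$ match the paper's proof essentially verbatim. The genuine difference lies in the step $A\subseteq C$. The paper first establishes $A=B$ by applying weak* compactness to $(T_\alpha'\mu)_\alpha$ for a \emph{fixed} functional $\mu\in\mathrm{C}(K)'$, and then upgrades weak convergence to norm convergence by invoking an external mean ergodic theorem (Theorem~1.7 of \cite{Schr2013a}), which yields that $\EuScript{S}$ is mean ergodic on the invariant ideal $\mathrm{rad}_{\EuScript{S}}(0)$ with projection $0$. You instead let the base point vary with the index, choosing $\mu=\delta_{x_j}$ at a point where $T_{\alpha_j}|f|$ attains (at least) $\varepsilon$, so that the very same compactness-plus-right-ergodicity argument already produces an invariant probability measure $\nu$ with $\langle|f|,\nu\rangle\geq\varepsilon$, giving $A\subseteq C$ directly. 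Your route is more self-contained, since it avoids the black-box appeal to the mean ergodic theorem; the paper's route has the minor advantage of isolating $A=B$ as a separate statement and tying the norm convergence explicitly to the general theory of mean ergodic semigroups.
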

\begin{proof}
By Lemma \ref{lemmaquotient} we may assume $L=K$. 
Take $f \in \mathrm{rad}_{\EuScript{S}}(0)$ and any right ergodic operator net $(T_\alpha)_{\alpha \in A}$ for $\EuScript{S}$.\\
Let $\mu \in \mathrm{C}(K)'$ and observe that each subnet of $(T_\alpha'\mu)_{\alpha \in A}$ has a subnet converging to some $\nu \in \mathrm{P}_{\EuScript{S}}(K)$. \color{black} Since $\langle |f|,\nu \rangle = 0$ (see Remark \ref{capinv}), we \color{black} obtain that each subnet of $(\langle T_\alpha |f|,\mu \rangle)_{\alpha \in A}$ has a subnet converging to zero which implies
\begin{align*}
\lim_{\alpha} \langle T_\alpha |f|,\mu \rangle = 0.
\end{align*}
Now let $f \in \mathrm{C}(K)$ with $\lim_{\alpha} T_\alpha |f| = 0$ weakly for some right ergodic operator net $(T_\alpha)_{\alpha \in A}$ for $\EuScript{S}$. Then for $\mu \in \mathrm{P}_{\EuScript{S}}(K)$
\begin{align*}
0 = \lim_{\alpha} \langle T_\alpha |f|,\mu \rangle = \langle |f|,\mu \rangle
\end{align*}
which proves $f \in \mathrm{rad}_{\EuScript{S}}(0)$ and thus the first equation.\\
By Theorem 1.7 of \cite{Schr2013a} the semigroup $\EuScript{S}$ is mean ergodic on $\mathrm{rad}_{\EuScript{S}}(0)$ with mean ergodic projection $P=0$ and therefore
\begin{align*}
\mathrm{rad}_{\EuScript{S}}(0) \subset \left\{f \in \mathrm{C}(K)\mmid \lim_{\alpha} T_\alpha |f| = 0 \textrm{ in the norm of } \mathrm{C}(K)\right\}.
\end{align*}
The converse inclusion is obvious. 
\end{proof}
If $\EuScript{S}$ has a right ergodic operator sequence (for example if it has a F\o lner sequence a\color{black}s defined in Assumption \ref{assumption1} below), then Lebesgue's Theorem yields the following result.
\begin{corollary}\label{cor1}
Suppose that $(T_n)_{n \in \N}$ is a right ergodic operator sequence for $\EuScript{S}$. Then 
\begin{align*}
\mathrm{rad}_{\EuScript{S}}(I) = \left\{f \in \mathrm{C}(K)\mmid \lim_{n \rightarrow \infty} T_n |f|(x)  = 0 \textrm{ for each } x \in \supp I\right\}
\end{align*}
for each $\EuScript{S}$-ideal $I$.
\end{corollary}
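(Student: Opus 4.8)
The plan is to derive the corollary directly from Theorem~\ref{maintheorem}, exploiting that for a \emph{sequence} $(T_n)_{n\in\N}$ the dominated convergence theorem becomes available. Write $L \defeq \supp I$. Applying Theorem~\ref{maintheorem} with the right ergodic operator net taken to be the sequence $(T_n)_{n\in\N}$, the ideal $\mathrm{rad}_{\EuScript{S}}(I)$ equals both $\{f\in\mathrm{C}(K)\mid \lim_n\langle T_n|f|,\mu\rangle=0 \textrm{ for all }\mu\in\mathrm{C}(L)'\}$ and $\{f\in\mathrm{C}(K)\mid \|(T_n|f|)|_L\|_\infty\to 0\}$.

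First I would dispose of the inclusion ``$\subseteq$'': if $f\in\mathrm{rad}_{\EuScript{S}}(I)$, then by the second description $(T_n|f|)|_L$ tends to $0$ uniformly on $L$, and in particular $T_n|f|(x)\to 0$ for every $x\in\supp I$, so $f$ lies in the right-hand side.

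For the reverse inclusion ``$\supseteq$'', suppose $\lim_n T_n|f|(x)=0$ for each $x\in L$. I would fix $\mu\in\mathrm{C}(L)'$ and, after decomposing $\mu$ into real/imaginary and positive/negative parts (equivalently, passing to $|\mu|$), reduce to the case $0\le\mu$. The key observation is that every operator in $\overline{\mathrm{co}}\,\EuScript{S}$ is again Markov, since convex combinations and strong-operator limits of Markov operators are positive and fix $\mathbbm{1}$; hence $0\le T_n|f|\le\|f\|_\infty\mathbbm{1}$ for all $n$, so the continuous functions $(T_n|f|)|_L$ are dominated on $L$ by the $\mu$-integrable constant $\|f\|_\infty$. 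Lebesgue's dominated convergence theorem then gives $\int_L T_n|f|\,\mathrm{d}\mu\to 0$, and since $\mu\in\mathrm{C}(L)'$ was arbitrary, Theorem~\ref{maintheorem} yields $f\in\mathrm{rad}_{\EuScript{S}}(I)$.

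I do not expect a genuine obstacle here; the point to keep in mind is that the argument really uses that $(T_n)$ is a sequence — this is exactly what makes dominated convergence applicable — whereas for an arbitrary right ergodic net one only has the norm characterisation of Theorem~\ref{maintheorem}. A minor point worth stating explicitly is that $T_n|f|$ is automatically nonnegative and bounded by $\|f\|_\infty$, which supplies the integrable majorant.
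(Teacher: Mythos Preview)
Your proof is correct and follows essentially the same route as the paper: the paper simply remarks that Lebesgue's dominated convergence theorem yields the corollary from Theorem~\ref{maintheorem}, and you have spelled out precisely this argument, including the Markov bound $0\le T_n|f|\le\|f\|_\infty\mathbbm{1}$ that supplies the integrable majorant.
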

 The next corollary shows that if $\EuScript{S}$ is the semigroup generated by a Markov lattice homomorphism $T \in \mathscr{L}(\mathrm{C}(K))$ (i.e., a Koopman operator), the radical $\mathrm{rad}_{\EuScript{S}}(0)$ of the zero ideal coincides with the almost weakly stable part of $\mathrm{C}(K)$ with respect to $T$ as defined in (9.4) on page 176 of \cite{EFHN2015}.\color{black}
\begin{corollary}\label{cor}
Assume that $\varphi\colon K \longrightarrow K$ is a continuous mapping and $\EuScript{S} = \EuScript{S}_\varphi$. Then
\begin{align*}
\mathrm{rad}_{\EuScript{S}}(0)= \left\{f \in \mathrm{C}(K)\mmid \lim_{N \rightarrow \infty}\frac{1}{N}\sum_{n=0}^{N-1} |\langle T_\varphi^nf,\mu \rangle| = 0 \textrm{ for each } \mu \in \mathrm{C}(K)'\right\}.
\end{align*}
\end{corollary}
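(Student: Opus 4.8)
The plan is to reduce the statement to Corollary~\ref{cor1}. Since $\EuScript{S} = \EuScript{S}_\varphi = \{T_\varphi^n \mid n \in \N_0\}$ is generated by the single Markov operator $T_\varphi$, the Ces\`aro means $C_N \defeq \frac{1}{N}\sum_{n=0}^{N-1} T_\varphi^n$, $N \in \N$, form a right ergodic operator sequence for $\EuScript{S}$ (as recalled in the example preceding Theorem~\ref{maintheorem}). Applying Corollary~\ref{cor1} to the zero ideal, whose support is all of $K$, we obtain
\begin{align*}
\mathrm{rad}_{\EuScript{S}}(0) = \left\{f \in \mathrm{C}(K) \mid \lim_{N \to \infty} (C_N|f|)(x) = 0 \textrm{ for all } x \in K\right\}.
\end{align*}
I would then rewrite the inner quantity using that $T_\varphi$ is a Koopman operator, hence a lattice homomorphism: $T_\varphi^n|f| = |f|\circ\varphi^n = |f\circ\varphi^n| = |T_\varphi^n f|$, while evaluation at a point $x \in K$ is the pairing with $\delta_x$. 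This gives
\begin{align*}
(C_N|f|)(x) = \frac{1}{N}\sum_{n=0}^{N-1} \bigl|(T_\varphi^n f)(x)\bigr| = \frac{1}{N}\sum_{n=0}^{N-1} \bigl|\langle T_\varphi^n f, \delta_x\rangle\bigr|,
\end{align*}
so that $\mathrm{rad}_{\EuScript{S}}(0)$ is already identified with the set of $f \in \mathrm{C}(K)$ for which the Ces\`aro averages in the statement vanish for every point mass $\mu = \delta_x$, $x \in K$.

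It then remains to pass from point masses to arbitrary $\mu \in \mathrm{C}(K)'$. The inclusion "$\supseteq$" is immediate, since if $\frac{1}{N}\sum_{n=0}^{N-1}|\langle T_\varphi^n f, \mu\rangle| \to 0$ holds for every $\mu \in \mathrm{C}(K)'$ it holds in particular for every $\mu = \delta_x$, whence $f \in \mathrm{rad}_{\EuScript{S}}(0)$ by the previous paragraph. For "$\subseteq$", fix $f \in \mathrm{rad}_{\EuScript{S}}(0)$ and $\mu \in \mathrm{C}(K)'$. Using $|\langle T_\varphi^n f, \mu\rangle| \leq \langle |T_\varphi^n f|, |\mu|\rangle$ for the total variation measure $|\mu|$ together with $\frac{1}{N}\sum_{n=0}^{N-1}|T_\varphi^n f| = C_N|f|$, I would estimate
\begin{align*}
\frac{1}{N}\sum_{n=0}^{N-1} \bigl|\langle T_\varphi^n f, \mu\rangle\bigr| \leq \frac{1}{N}\sum_{n=0}^{N-1} \bigl\langle |T_\varphi^n f|, |\mu|\bigr\rangle = \int_K (C_N|f|)\,\mathrm{d}|\mu|
\end{align*}
and then let $N \to \infty$. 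By the first part $(C_N|f|)(x) \to 0$ for each $x \in K$; moreover $0 \leq C_N|f| \leq \|f\|_{\mathrm{C}(K)}$ since each $C_N$ is a Markov operator, and $|\mu|$ is a finite measure. Lebesgue's dominated convergence theorem thus yields $\int_K (C_N|f|)\,\mathrm{d}|\mu| \to 0$, hence $\frac{1}{N}\sum_{n=0}^{N-1}|\langle T_\varphi^n f, \mu\rangle| \to 0$, which is what we wanted.

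I do not expect a real obstacle, as Theorem~\ref{maintheorem} and Corollary~\ref{cor1} carry the substantive work; two small points deserve attention. First, the elementary identity $\langle |T_\varphi^n f|, \delta_x\rangle = |\langle T_\varphi^n f, \delta_x\rangle|$ is precisely where the Koopman (lattice-homomorphism) structure of $\EuScript{S}_\varphi$ is used, placing the modulus outside the pairing; for a general Markov semigroup only the inequality "$\geq$" holds, which explains why the statement is specific to $\EuScript{S} = \EuScript{S}_\varphi$. Second, the dominated convergence step upgrading the characterisation from point evaluations to arbitrary regular Borel measures $\mu$ is the only place where the finiteness of $|\mu|$ and the uniform bound on $C_N|f|$ are needed.
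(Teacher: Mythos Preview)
Your proof is correct and follows the same idea as the paper's. The only cosmetic difference is that for the inclusion ``$\subseteq$'' the paper invokes Theorem~\ref{maintheorem} directly (which already gives $\langle C_N|f|,|\mu|\rangle \to 0$ for every measure), whereas you pass through Corollary~\ref{cor1} and then re-derive the measure statement via dominated convergence; the estimate $|\langle T_\varphi^n f,\mu\rangle|\le \langle T_\varphi^n|f|,|\mu|\rangle$ and the use of Corollary~\ref{cor1} for the converse inclusion are identical to the paper's argument.
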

\begin{proof}
If $f \in \mathrm{rad}_{\EuScript{S}}(0)$ and $\mu \in \mathrm{C}(K)'$, we obtain
\begin{align*}
	\frac{1}{N}\sum_{n=0}^{N-1}|\langle T_\varphi^nf,\mu \rangle| \leq \frac{1}{N}\sum_{n=0}^{N-1}\langle T_\varphi^n|f|,|\mu| \rangle
\end{align*}
for every $N \in \N$ and therefore $\lim_{N \rightarrow \infty} \frac{1}{N}\sum_{n=0}^{N-1}|\langle T_\varphi^nf,\mu \rangle| =0$ by Theorem \ref{maintheorem}. The converse inclusion follows directly from Corollary \ref{cor1}.
\end{proof}
For Koopman semigroups $\EuScript{S}$ \color{black} we also obtain a further \color{black} dynamical characterization of $\mathrm{rad}_{\EuScript{S}}(0)$. \color{black} For the rest of this section we make the following assumption (cf. Examples 1.2 (e) of \cite{Schr2013a}). \color{black}
\begin{assumption}\label{assumption1}
\color{black} Let $\mathcal{S}$ \color{black} be a closed subsemigroup of a locally compact group \color{black} $\mathcal{G}$ \color{black} with left-invariant Haar measure $\lambda$ acting on $K$ such that
\begin{align*}
\color{black} \mathcal{S} \color{black} \times K \longrightarrow K, \quad (s,x) \mapsto sx
\end{align*}
is continuous. Let $\EuScript{S}$ be the associated Koopman semigroup, i.e., \color{black}
\begin{align*}
\EuScript{S} =\{T_s\mid s \in \mathcal{S}\}
\end{align*}
\color{black} with $T_s f(x) \defeq f(sx)$ for $f \in \mathrm{C}(K)$\color{black}, $s \in \mathcal{S}$ \color{black} and $x \in K$, which is strongly continuous by Theorem 4.17 of \cite{EFHN2015}. 
Moreover, we assume that $(F_n)_{n \in \N}$ is a \emph{F\o lner sequence} for \color{black} $\mathcal{S}$\color{black}, i.e., each $F_n$ is a compact subset of \color{black} $\mathcal{S}$ \color{black} with positive measure satisfying
\begin{align*}
\lim_{n \rightarrow \infty} \frac{\lambda(F_n \Delta sF_n)}{\lambda(F_n)} = 0
\end{align*}
for each $s \in \color{black} \mathcal{S} \color{black}$.
\end{assumption}
\color{black}
\begin{example}
\begin{enumerate}[(i)]
\item If $\color{black} \mathcal{S} \color{black}$ is the additive semigroup $\N_0$, then the sequence $(F_n)_{n \in \N}$ defined by $F_n \defeq \{0,...,n-1\}$ for $n \in \N$ is a F\o lner sequence for $\color{black} \mathcal{S} \color{black}$.
\item If $\color{black} \mathcal{S} \color{black}$ is the additive semigroup $\R_{\geq 0}$ and $(t_n)_{n \in \N}$ is any sequence in $(0,\infty)$ with $\lim_{n \rightarrow \infty} t_n = \infty$ then $(F_n)_{n \in \N}$ defined by $F_n \defeq [0,t_n]$ for $n \in \N$ is a F\o lner sequence for $\color{black} \mathcal{S} \color{black}$.
\end{enumerate}
\end{example}
\color{black}

\begin{lemma}
Under Assumption \ref{assumption1} \color{black} $\mathcal{S}$ \color{black} is left amenable and thus $\EuScript{S}$ is right amenable.
Moreover we obtain an ergodic operator sequence $(\EuScript{F}_n)_{n \in \N}$ for $\EuScript{S}$ by setting
\begin{align*}
\EuScript{F}_n f \defeq  \frac{1}{\lambda(F_n)}\int_{F_n}T_{s}f\,\mathrm{d}\lambda(s)
\end{align*}
for $f \in \mathrm{C}(K)$ and $n \in \N$.
\end{lemma}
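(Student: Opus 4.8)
The plan is to construct the invariant mean directly from the F\o lner sequence, transport it to $\EuScript{S}$, and then check by hand the two defining properties of a right ergodic net for $(\EuScript{F}_n)_{n\in\N}$.

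\emph{Left amenability.} For $n\in\N$ I would put $m_n(g)\defeq\lambda(F_n)^{-1}\int_{F_n}g\,\mathrm{d}\lambda$ for $g\in\mathrm{C}_{\mathrm{b}}(\mathcal{S})$; each $m_n$ is a positive functional with $m_n(\mathbbm{1})=1$, and the set of such means on $\mathrm{C}_{\mathrm{b}}(\mathcal{S})$ is weak* compact, so $(m_n)_{n\in\N}$ has a weak* cluster point $m$. For $s\in\mathcal{S}$ and $g\in\mathrm{C}_{\mathrm{b}}(\mathcal{S})$, left-invariance of $\lambda$ gives $\int_{F_n}g(st)\,\mathrm{d}\lambda(t)=\int_{sF_n}g\,\mathrm{d}\lambda$, whence $|m_n(L_sg)-m_n(g)|\leq\|g\|_\infty\,\lambda(F_n)^{-1}\lambda(F_n\Delta sF_n)$ with $L_sg(t)\defeq g(st)$, and this tends to $0$ by the F\o lner property. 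Passing to the cluster point yields $m(L_sg)=m(g)$, so $m$ is left-invariant and $\mathcal{S}$ is left amenable. Since the action is strongly continuous, $t\mapsto T_t$ is a continuous map $\mathcal{S}\to\EuScript{S}$, and as $T_sT_t=T_{ts}$ it is an \emph{anti}-homomorphism; composing $m$ with $f\mapsto\bigl(t\mapsto f(T_t)\bigr)$ gives a mean on $\mathrm{C}_{\mathrm{b}}(\EuScript{S})$ which is right-invariant because $R_{T_s}f(T_t)=f(T_tT_s)=f(T_{st})$, so $\EuScript{S}$ is right amenable.

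\emph{The operators $\EuScript{F}_n$.} Since $s\mapsto T_sf$ is continuous from the compact set $F_n$ into $(\mathrm{C}(K),\|\cdot\|_\infty)$, the integral defining $\EuScript{F}_nf$ exists as a Bochner integral, $\|\EuScript{F}_nf\|_\infty\leq\|f\|_\infty$, $\EuScript{F}_n\mathbbm{1}=\mathbbm{1}$ and $\EuScript{F}_n\geq 0$, so $\EuScript{F}_n$ is a Markov operator. To show $\EuScript{F}_n\in\overline{\mathrm{co}}\,\EuScript{S}$ I would fix $f_1,\dots,f_k\in\mathrm{C}(K)$ and $\varepsilon>0$ and, using continuity of $s\mapsto(T_sf_1,\dots,T_sf_k)$ and compactness of $F_n$, pick Borel sets $A_1,\dots,A_m$ of positive measure partitioning $F_n$ up to a $\lambda$-null set, together with $s_j\in A_j$, such that $\|T_sf_i-T_{s_j}f_i\|_\infty<\varepsilon$ for $s\in A_j$ and $1\le i\le k$; then $R\defeq\sum_j\lambda(F_n)^{-1}\lambda(A_j)\,T_{s_j}$ is a convex combination of elements of $\EuScript{S}$ with $\|\EuScript{F}_nf_i-Rf_i\|_\infty<\varepsilon$ for every $i$, which gives the claim.

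\emph{Right ergodicity.} For $S=T_s\in\EuScript{S}$ and $f\in\mathrm{C}(K)$, using $T_tT_s=T_{st}$, the substitution $u=st$, and left-invariance of $\lambda$ once more,
\begin{align*}
\EuScript{F}_n(\mathrm{Id}-T_s)f=\frac{1}{\lambda(F_n)}\left(\int_{F_n}T_uf\,\mathrm{d}\lambda(u)-\int_{sF_n}T_uf\,\mathrm{d}\lambda(u)\right),
\end{align*}
so $\|\EuScript{F}_n(\mathrm{Id}-T_s)f\|_\infty\leq\|f\|_\infty\,\lambda(F_n)^{-1}\lambda(F_n\Delta sF_n)\to 0$. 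Hence $\lim_n\EuScript{F}_n(\mathrm{Id}-S)=0$ in the strong operator topology for each $S\in\EuScript{S}$, and $(\EuScript{F}_n)_{n\in\N}$ is a right ergodic operator sequence for $\EuScript{S}$. The main point to watch is the order reversal in $s\mapsto T_s$, which is precisely what turns the \emph{left} F\o lner condition on $\mathcal{S}$ into \emph{right} ergodicity (and right amenability) for $\EuScript{S}$; the second is the upgrade from pointwise membership in closed convex hulls to $\EuScript{F}_n\in\overline{\mathrm{co}}\,\EuScript{S}$. Everything else is routine.
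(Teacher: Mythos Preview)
Your proof is correct and follows essentially the same route as the paper: you build the invariant mean as a weak* cluster point of the averaging functionals $m_n$ and use the F\o lner estimate $\lambda(F_n\Delta sF_n)/\lambda(F_n)\to 0$ both for invariance of $m$ and for the vanishing of $\EuScript{F}_n(\mathrm{Id}-T_s)$. The paper simply declares the second assertion ``obvious'' and omits the passage from left amenability of $\mathcal{S}$ to right amenability of $\EuScript{S}$, whereas you spell out the anti-homomorphism $s\mapsto T_s$, the Riemann-sum approximation showing $\EuScript{F}_n\in\overline{\mathrm{co}}\,\EuScript{S}$, and the right-ergodicity computation; these are welcome details but not a different argument.
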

\begin{proof}
For each $n \in \N$ set 
\begin{align*}
m_n(f) \defeq \frac{1}{\lambda(F_n)}\int_{F_n}f(s)\,\mathrm{d}\lambda(s)
\end{align*}
for $f \in \mathrm{C}_{\mathrm{b}}(\color{black}\mathcal{S}\color{black})$. Then $m_n \in \mathrm{C}_{\mathrm{b}}(\color{black}\mathcal{S}\color{black})'$ with $m_n(\mathbbm{1}) =1$ and $m_n \geq 0$ for each $n \in \N$. Let $m$ be any weak* limit point of $(m_n)_{n \in \N}$. Since
\begin{align*}
\left|\frac{1}{\lambda(F_n)}\int_{F_n}f(s)\,\mathrm{d}\lambda(s) - \frac{1}{\lambda(F_n)}\int_{F_n}f(ts)\,\mathrm{d}\lambda(s)\right| \leq \frac{\lambda(F_n \Delta tF_n)}{\lambda(F_n)} \cdot \|f\|
\end{align*}
for each $f \in \mathrm{C}_{\mathrm{b}}(\color{black}\mathcal{S}\color{black})$, $n \in \N$ and $t \in \color{black}\mathcal{S}\color{black}$, $m$ is an invariant mean. The second assertion is obvious.
\end{proof}
\color{black} We now introduce certain ``attractors'' of the dynamical system $(K;S)$  with respect to the F\o lner sequence $(F_n)_{n \in \N}$.
\begin{definition}
A closed non-empty set $L \subset K$ is a \emph{(global) center of attraction} if for each open set $U \supset L$ we have
\begin{align*}
\lim_{n \rightarrow \infty} \frac{1}{\lambda(F_n)}\lambda(\{s \in F_n\mid sx \in U\}) =1
\end{align*}
for every $x \in K$.
\end{definition}
This type of attraction is quite weak. Loosely speaking, orbits of points may move arbitarily far away from a center of attraction as long as they come back \enquote{often enough} with respect to the F\o lner sequence.\color{black}\\
Global as well as point-dependent centers of attraction for $\mathbb{N}_0$- and $\mathbb{R}_{\geq 0}$-actions have been examined by several authors (see, e.g., \cite{Hilm1936}, \cite{Bern1951}, \cite{JaRo1972}, \cite{Sigm1977}, Exercise I.8.3 in \cite{Mane1987} and \cite{Dai2016}). In a recent paper Z. Chen and X. Dai study the chaotic behavio\color{black}r of minimal centers of attraction with respect to a point for discrete amenable group actions (see \color{black} \cite{ChDa2017}).\\
It is known that in case of $\mathbb{N}_0$-actions on metric compact spaces there always is a unique minimal (global) center of attraction given by the closure of the union of the supports of ergodic measures (see Exercises I.8.3 and II.1.5 in \cite{Mane1987}). The following result shows that this still holds in our more general situation.\color{black}
\begin{theorem}\label{center}
Under Assumption \ref{assumption1} the definition of a center of attraction does not depend on the F\o lner sequence. Moreover, for a closed non-empty set $L \subset K$ the following assertions are equivalent.
\begin{enumerate}[(a)]
\item $L$ is a center of attraction.
\item $I_L\defeq \{f \in \mathrm{C}(K) \mid f|_L = 0\}\subset \mathrm{rad}_{\EuScript{S}}(0)$.
\end{enumerate}
In particular there is a unique minimal center of attraction $\mathrm{M}(\EuScript{S})$ given by the closure of the union of the supports of ergodic measures, i.e.,
\begin{align*}
\mathrm{M}(\EuScript{S}) = \supp \mathrm{rad}_{\EuScript{S}}(0).
\end{align*}
\end{theorem}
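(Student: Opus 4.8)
The plan is to reduce everything to the description of $\mathrm{rad}_{\EuScript{S}}(0)$ provided by Corollary \ref{cor1}, applied to the right ergodic operator sequence $(\EuScript{F}_n)_{n\in\N}$ coming from the F\o lner sequence. The key dictionary is: for $f\in\mathrm{C}(K)$ with $0\le f\le\mathbbm 1$ and an open set $U$, the function $\EuScript{F}_n f(x)=\tfrac1{\lambda(F_n)}\int_{F_n}f(sx)\,\mathrm d\lambda(s)$ compares with the ``sojourn frequency'' $\tfrac1{\lambda(F_n)}\lambda(\{s\in F_n\mid sx\in U\})$. Concretely I would first establish the equivalence (a)$\Leftrightarrow$(b) for a \emph{fixed} F\o lner sequence, and then observe that since Corollary \ref{cor1} characterizes $\mathrm{rad}_{\EuScript{S}}(0)$ independently of which right ergodic operator sequence is used (Theorem \ref{maintheorem} gives the same set for every right ergodic net), condition (b) does not refer to $(F_n)_n$ at all; hence neither does (a), which gives the first assertion of the theorem for free. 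The last assertion is then immediate: by (a)$\Leftrightarrow$(b), a closed nonempty $L$ is a center of attraction iff $I_L\subset\mathrm{rad}_{\EuScript{S}}(0)=I_{\supp\mathrm{rad}_{\EuScript{S}}(0)}$, i.e. iff $\supp\mathrm{rad}_{\EuScript{S}}(0)\subset L$; so $\mathrm M(\EuScript{S})\defeq\supp\mathrm{rad}_{\EuScript{S}}(0)$ is the smallest one, and by the Remark following Proposition \ref{radid1} it equals the closure of the union of the supports of ergodic measures.

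For (b)$\Rightarrow$(a): assume $I_L\subset\mathrm{rad}_{\EuScript{S}}(0)$ and let $U\supset L$ be open. Using compactness of $K$ and $L$, pick $g\in\mathrm{C}(K)$ with $0\le g\le\mathbbm 1$, $g\equiv 0$ on $L$ (so $g\in I_L$) and $g\equiv 1$ on $K\setminus U$; such a Urysohn function exists since $L$ and $K\setminus U$ are disjoint closed sets. Then $\mathbbm 1_{K\setminus U}\le g$ pointwise, so for every $x\in K$
\begin{align*}
\frac{\lambda(\{s\in F_n\mid sx\notin U\})}{\lambda(F_n)}=\frac1{\lambda(F_n)}\int_{F_n}\mathbbm 1_{K\setminus U}(sx)\,\mathrm d\lambda(s)\le \EuScript{F}_n g(x).
\end{align*}
Since $g\in I_L\subset\mathrm{rad}_{\EuScript{S}}(0)$, Corollary \ref{cor1} gives $\lim_{n}\EuScript{F}_n g(x)=\lim_n \EuScript F_n|g|(x)=0$ for every $x\in K$ (note $K=\supp 0$), hence the sojourn frequencies tend to $1$ and $L$ is a center of attraction.

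For (a)$\Rightarrow$(b): assume $L$ is a center of attraction and let $f\in I_L$; we must show $f\in\mathrm{rad}_{\EuScript{S}}(0)$, i.e. $\lim_n\EuScript{F}_n|f|(x)=0$ for all $x$. Fix $\varepsilon>0$ and set $U\defeq\{y\in K\mid |f(y)|<\varepsilon\}$, which is open and contains $L$ since $f|_L=0$. Split the integral over $F_n$ according to whether $sx\in U$: on $\{s\in F_n\mid sx\in U\}$ the integrand $|f(sx)|$ is $<\varepsilon$, and on the complementary set it is $\le\|f\|_\infty$, whose relative measure tends to $0$ by hypothesis. Thus $\limsup_n\EuScript{F}_n|f|(x)\le\varepsilon$, and letting $\varepsilon\to0$ gives $\lim_n\EuScript{F}_n|f|(x)=0$; by Corollary \ref{cor1}, $f\in\mathrm{rad}_{\EuScript{S}}(0)$. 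The main point to be careful about — the only place where anything beyond routine estimation happens — is the interface between the measure-theoretic sojourn frequencies and the operators $\EuScript{F}_n$, i.e. that $\EuScript F_n$ genuinely integrates the Koopman orbit $s\mapsto f(sx)$ against normalized Haar measure on $F_n$ (this is exactly the definition in the Lemma preceding the statement) and that Corollary \ref{cor1} applies with $\supp 0 = K$; once this is in place both implications are short sandwich arguments with Urysohn functions.
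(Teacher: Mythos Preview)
Your argument is correct. Both directions are clean sandwich arguments via Urysohn functions and Corollary \ref{cor1}, and your reasoning for the independence of the F\o lner sequence and for the identification of the minimal center is sound.

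The paper takes a different route. Instead of working pointwise through Corollary \ref{cor1}, it identifies $I_L$ with $\mathrm{C}_0(K\setminus L)$, reformulates the center-of-attraction condition in terms of compact sets $A\subset K\setminus L$, checks Borel measurability of $(s,x)\mapsto\mathbbm 1_A(sx)$, and then uses Lebesgue's and Fubini's theorems together with the density of $\mathrm{C}_{\mathrm c}(K\setminus L)$ in $\mathrm{C}_0(K\setminus L)$ to convert the condition into weak convergence $\EuScript F_n|f|\to 0$ for every $f\in I_L$; the conclusion then comes from the weak formulation in Theorem \ref{maintheorem} rather than the pointwise one in Corollary \ref{cor1}. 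Your approach is more elementary: it avoids Fubini, dual-space considerations, and any appeal to regularity or density, replacing them with a direct Urysohn comparison between $\mathbbm 1_{K\setminus U}$ and a continuous $g\in I_L$. The paper's approach, on the other hand, makes the link with the weak stability description of $\mathrm{rad}_{\EuScript{S}}(0)$ in Theorem \ref{maintheorem} more explicit. Either way, the logical content is the same equivalence.
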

\begin{proof}

Take a non-empty and closed set $L \subset K$. The mapping
\begin{align*}
I_L \longrightarrow \mathrm{C}_0(K\setminus L),\quad f \mapsto f|_{K\setminus L}
\end{align*} 
is an isomorphism of Banach lattices. Now $L$ is a center of attraction if and only if
\begin{align*}
\lim_{n \rightarrow \infty} \frac{1}{\lambda(F_n)}\lambda(\{s \in F_n\mid sx \in A\}) =0,
\end{align*}
i.e.,
\begin{align}\label{equ2}
\lim_{n \rightarrow \infty} \frac{1}{\lambda(F_n)}\int_{F_n} \mathbbm{1}_A(sx)\,\mathrm{d}\lambda(s) =0
\end{align}
for each compact set $A \subset K\setminus L$ and each $x \in K$. Since the mapping 
\begin{align*}
\color{black}\mathcal{S}\color{black} \times K \longrightarrow K, \quad (s,x) \mapsto sx
\end{align*}
is continuous, the function $f \colon \color{black}\mathcal{S}\color{black} \times K \longrightarrow \color{black}\C\color{black}, (s,x) \mapsto \mathbbm{1}_A(sx)$ is Borel measurable.
By Lebesgue's and Fubini's theorems Equation \color{black}(\ref{equ2}) \color{black} is thus equivalent to 
\begin{align*}
\lim_{n \rightarrow \infty} \int_K \int_{F_n} \mathbbm{1}_A(sx) \,\mathrm{d}\lambda(s) \, \mathrm{d}\mu(x) = \lim_{n \rightarrow \infty} \frac{1}{\lambda(F_n)}\int_{F_n} T_s'\mu(A)\, \mathrm{d}\lambda(s) =0
\end{align*}
for each $\mu \in \mathrm{C}_0(K\setminus L)'$ and each compact set $A \subset K \setminus L$.
Since the space of compactly supported continuous functions $\mathrm{C}_\mathrm{c}(K\setminus L)$ is dense in $\mathrm{C}_0(K\setminus L)$, this is the case if and only if
\begin{align*}
\lim_{n \rightarrow \infty} \frac{1}{\lambda(F_n)}\int_{F_n} \langle T_s|f|,\mu\rangle \mathrm{d}\lambda(s) =0
\end{align*}
for each $f \in I_L$ and every $\mu \in I_L'$. This means
\begin{align*}
\lim_{n \rightarrow \infty} \EuScript{F}_n|f|=  0
\end{align*}
with respect to the weak topology for each $f \in I_L$, i.e., $I_L \subset \mathrm{rad}_{\EuScript{S}}(0)$.  
\end{proof}

\section{The Primitive Spectrum as a Topological Space}
\label{sec:5}
In this section we return to a general right amenable Markov semigroup $\EuScript{S} \subset \mathscr{L}(\mathrm{C}(K))$ and analyze the topology of $\mathrm{Prim}(\EuScript{S})$. It turns out that it basically has the same properties as the (non-dynamical) primitive spectrum of C*-algebras and the topology of affine schemes of algebraic geometry (cf. Section (2.2) of \cite{GoeWed2010}). We employ methods as in Chapter IV of \cite{Dixm1977} and Section 4.3 of \cite{Pede1979} and first prove two technical lemmas before introducing a topology on $\mathrm{Prim}(\EuScript{S})$. Recall that given $\mu \in \mathrm{P}_\EuScript{S}(K)$ we write $\EuScript{S}_\mu$ for the induced semigroup on $\mathrm{L}^1(K,\mu)$.
\begin{lemma}\label{lemmafixedfunct}
If $\mu \in \mathrm{P}_\EuScript{S}(K)$ and $L \subset K$ is the support of an $\EuScript{S}$-ideal, then $\mathbbm{1}_L \in \fix(\EuScript{S}_{\mu})$.
\end{lemma}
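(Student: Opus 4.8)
The plan is to verify directly that $S_\mu \mathbbm{1}_L = \mathbbm{1}_L$ in $\mathrm{L}^1(K,\mu)$ for every $S \in \EuScript{S}$. Since $S_\mu$ is bi-Markov we have $S_\mu\mathbbm{1} = \mathbbm{1}$, so this is equivalent to $S_\mu\mathbbm{1}_{K\setminus L} = \mathbbm{1}_{K\setminus L}$, and the latter reformulation is the one I would work with: the point of passing to $K\setminus L$ is that this set is open, so $\mathbbm{1}_{K\setminus L}$ can be approximated in $\mathrm{L}^1(K,\mu)$ from below by continuous functions vanishing on $L$, which then lie in the ideal $I_L=\{f\in\mathrm{C}(K)\mid f|_L=0\}$.

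First I would use inner regularity of $\mu$ to pick compact sets $A_n\subset K\setminus L$ with $\mu(A_n)\to\mu(K\setminus L)$, and then Urysohn's lemma for the disjoint closed sets $A_n$ and $L$ to obtain $f_n\in\mathrm{C}(K)$ with $\mathbbm{1}_{A_n}\le f_n\le\mathbbm{1}_{K\setminus L}$ pointwise on $K$. Then $f_n\in I_L$ and $\|f_n-\mathbbm{1}_{K\setminus L}\|_{\mathrm{L}^1(K,\mu)}\le\mu(K\setminus L)-\mu(A_n)\to 0$, so $f_n\to\mathbbm{1}_{K\setminus L}$ in $\mathrm{L}^1(K,\mu)$, and hence $S_\mu f_n\to S_\mu\mathbbm{1}_{K\setminus L}$ since $S_\mu$ is an $\mathrm{L}^1$-contraction (being positive with $S_\mu'\mathbbm{1}=\mathbbm{1}$).

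Next I would push the order bounds through $S$. Because $L$ is the support of an $\EuScript{S}$-ideal, $I_L$ is $\EuScript{S}$-invariant, so $Sf_n\in I_L$, i.e.\ $Sf_n$ vanishes on $L$; together with $0\le Sf_n\le S\mathbbm{1}=\mathbbm{1}$ this gives $0\le Sf_n\le\mathbbm{1}_{K\setminus L}$ pointwise on $K$, hence $0\le S_\mu f_n\le\mathbbm{1}_{K\setminus L}$ in $\mathrm{L}^1(K,\mu)$ (here $S_\mu f_n$ denotes the class of $Sf_n$). Letting $n\to\infty$ and using that the positive cone of $\mathrm{L}^1(K,\mu)$ is closed, I obtain $0\le S_\mu\mathbbm{1}_{K\setminus L}\le\mathbbm{1}_{K\setminus L}$. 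Finally, since $S_\mu'\mathbbm{1}=\mathbbm{1}$ we have $\int_K S_\mu\mathbbm{1}_{K\setminus L}\,\mathrm{d}\mu=\int_K\mathbbm{1}_{K\setminus L}\,S_\mu'\mathbbm{1}\,\mathrm{d}\mu=\int_K\mathbbm{1}_{K\setminus L}\,\mathrm{d}\mu$, so the nonnegative function $\mathbbm{1}_{K\setminus L}-S_\mu\mathbbm{1}_{K\setminus L}$ has zero integral and therefore vanishes $\mu$-a.e. Thus $S_\mu\mathbbm{1}_{K\setminus L}=\mathbbm{1}_{K\setminus L}$, whence $S_\mu\mathbbm{1}_L=\mathbbm{1}-S_\mu\mathbbm{1}_{K\setminus L}=\mathbbm{1}_L$ and $\mathbbm{1}_L\in\fix(\EuScript{S}_\mu)$.

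I do not expect a genuine obstacle here; the only thing to get right is the initial reduction — realizing that $\mathbbm{1}_L$ is best reached through $\mathbbm{1}_{K\setminus L}$ and an increasing Urysohn approximation inside $I_L$, after which the conclusion is forced by invariance of the integral ($S_\mu'\mathbbm{1}=\mathbbm{1}$) together with the order bounds. The one technical care point is that the inequality $0\le S_\mu f_n\le\mathbbm{1}_{K\setminus L}$ must survive the $\mathrm{L}^1$-limit, which is immediate from closedness of the positive cone (or by passing to an a.e.\ convergent subsequence).
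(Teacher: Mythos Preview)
Your proof is correct and follows essentially the same approach as the paper: Urysohn-type approximation of the indicator by continuous functions, regularity of $\mu$ to control the $\mathrm{L}^1$-error, the self-supporting/$\EuScript{S}$-invariance property to obtain a one-sided inequality, and the bi-Markov property to upgrade it to equality. The only cosmetic difference is that the paper approximates $\mathbbm{1}_L$ from above (via open $O\supset L$) to get $\mathbbm{1}_L\le S_\mu\mathbbm{1}_L$ and then invokes a cited lemma for equality, whereas you dually approximate $\mathbbm{1}_{K\setminus L}$ from below inside $I_L$ and carry out the integral argument explicitly.
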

\begin{proof}
We fix $S \in \EuScript{S}$. For each open set $O \supset L$ take a continuous function $f_O$ with $f_O(K) \subset [0,1]$, $f|_L = \mathbbm{1}$ and $f_O|_{(K\setminus O)}=0$. The set $\Lambda$ of open sets containing $L$ is directed  with respect to converse set inclusion \color{black} and thus we obtain a net $(f_O)_{O \in \Lambda}$ with
\begin{align*}
\|\mathbbm{1}_L - f_O\|_{\mathrm{L}^1(K,\mu)} \leq \mu(O\setminus L) \rightarrow 0 
\end{align*}
by regularity of $\mu$. By definition of $\EuScript{S}_\mu$
\begin{align*}
S_{\mu}\mathbbm{1}_L = \lim_{O} Sf_O.
\end{align*}
in $\mathrm{L}^1(K,\mu)$.
Moreover,
\begin{align*}
Sf_O(x) = \langle Sf_O,\delta_x\rangle = \langle f_O,S'\delta_x\rangle = 1
\end{align*}
for each $x \in L$ since $L$ is the support of an $\EuScript{S}$-ideal. This implies
\begin{align*}
0 = \lim_O (\color{black}(\mathbbm{1}-Sf_O)\cdot \mathbbm{1}_L)\color{black} = \mathbbm{1}_L - S_{\mu}\mathbbm{1}_L \cdot \mathbbm{1}_L,
\end{align*}
where the limit is taken in $\mathrm{L}^1(K,\mu)$. Thus $\mathbbm{1}_L = S_{\mu}\mathbbm{1}_L\cdot \mathbbm{1}_L$ which shows $\mathbbm{1}_L \leq S_{\mu}\mathbbm{1}_L$ and consequently $\mathbbm{1}_L \in \fix(\EuScript{S}_\mu)$ by Theorem 13.2 (d) of \cite{EFHN2015}. 
\end{proof}
\begin{lemma}\label{firstlem}
Consider two $\EuScript{S}$-ideals $I_1,I_2$. If $p$ is a primitive $\EuScript{S}$-ideal with $I_1 \cap I_2 \subset p$, then $I_1 \subset p$ or $I_2 \subset p$.
\end{lemma}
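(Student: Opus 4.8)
The statement is a "primeness" property of primitive ideals, exactly parallel to the fact that in a commutative ring a prime ideal containing an intersection of two ideals contains one of them. The plan is to translate everything into the measure-theoretic picture via the ergodic measure $\mu$ with $p = I_\mu$, and then use that the supports $L_i = \supp I_i$ are self-supporting sets together with \cref{lemmafixedfunct}.

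First I would fix an ergodic $\mu \in \mathrm{P}_\EuScript{S}(K)$ with $p = I_\mu$ and write $L_i = \supp I_i$ for $i = 1,2$, so that $I_i = I_{L_i}$ and $I_1 \cap I_2 = I_{L_1 \cup L_2}$ (the intersection of the two function ideals vanishes exactly on $L_1 \cup L_2$, and $L_1 \cup L_2$ is again self-supporting). The hypothesis $I_1 \cap I_2 \subset p = I_\mu$ says precisely that every continuous function vanishing on $L_1 \cup L_2$ has $\mu$-integral zero of its absolute value, i.e. $\mu(K \setminus (L_1 \cup L_2)) = 0$, equivalently $\mu(L_1 \cup L_2) = 1$. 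Hence $\mathbbm{1}_{L_1} + \mathbbm{1}_{L_2} \geq \mathbbm{1}_{L_1 \cup L_2} = \mathbbm{1}$ in $\mathrm{L}^1(K,\mu)$.

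Now the key point: by \cref{lemmafixedfunct}, since each $L_i$ is the support of an $\EuScript{S}$-ideal, $\mathbbm{1}_{L_1}, \mathbbm{1}_{L_2} \in \fix(\EuScript{S}_\mu)$. Because $\mu$ is ergodic, $\fix(\EuScript{S}_\mu)$ is one-dimensional, hence equals $\C\mathbbm{1}$; as $\mathbbm{1}_{L_i}$ is a characteristic function lying in $\fix(\EuScript{S}_\mu)$, we must have $\mathbbm{1}_{L_i} = 0$ or $\mathbbm{1}_{L_i} = \mathbbm{1}$ in $\mathrm{L}^1(K,\mu)$, i.e. $\mu(L_i) \in \{0,1\}$. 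If $\mu(L_1 \cup L_2) = 1$ then $\mu(L_1) = 1$ or $\mu(L_2) = 1$ (they cannot both be $0$, else $\mu(L_1 \cup L_2) = 0$). Say $\mu(L_1) = 1$; then every $f \in I_1 = I_{L_1}$ vanishes on $L_1$, so $\langle |f|, \mu\rangle = 0$, i.e. $f \in I_\mu = p$. Thus $I_1 \subset p$, and symmetrically $\mu(L_2) = 1$ gives $I_2 \subset p$, completing the proof.

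The main obstacle — really the only nontrivial input — is the dichotomy $\mu(L_i) \in \{0,1\}$, which rests on \cref{lemmafixedfunct} (that $\mathbbm{1}_{L_i}$ is a fixed vector of $\EuScript{S}_\mu$) combined with one-dimensionality of $\fix(\EuScript{S}_\mu)$; once that is in hand the rest is bookkeeping about supports and absolute kernels. One should be slightly careful that $\mathbbm{1}_{L_i}$ being a characteristic function in a one-dimensional fixed space forces it to be $\mu$-a.e. $0$ or $1$ (a scalar multiple of $\mathbbm{1}$ that is $\{0,1\}$-valued), and that the identification $I_{L_1} \cap I_{L_2} = I_{L_1 \cup L_2}$ and $L_1 \cup L_2$ being self-supporting are both immediate from the definitions.
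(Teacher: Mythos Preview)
Your proof is correct and follows essentially the same route as the paper: write $p = I_\mu$ for an ergodic $\mu$, use \cref{lemmafixedfunct} to get $\mathbbm{1}_{L_i} \in \fix(\EuScript{S}_\mu)$, deduce $\mu(L_i) \in \{0,1\}$ from one-dimensionality of the fixed space, and combine with $\mu(L_1 \cup L_2) = 1$ to conclude. The only cosmetic difference is that you spell out the identifications $I_1 \cap I_2 = I_{L_1 \cup L_2}$ and $\mu(L_i)=1 \Rightarrow I_i \subset p$ a bit more explicitly than the paper does.
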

\begin{proof}
Let $p = I_\mu$ for some $\mu \in \mathrm{ex}\, \mathrm{P}_{\EuScript{S}}(K)$ and let $L_j \defeq \supp I_j$ for $j=1,2$. By Lemma \ref{lemmafixedfunct}, $\mathbbm{1}_{L_j} \in \fix(\EuScript{S}_\mu)$ for $j=1,2$ and therefore $\mu(L_j) \in \{0,1\}$  since $\mu$ is ergodic. Now $\supp \mu \subset L_1 \cup L_2$ implies \color{black} $\mu(L_1 \cup L_2) = 1$, so there is $j\in \{1,2\}$ with $\mu(L_j) = 1$. But this means $\supp \mu \subset L_j$ and consequently $I_j \subset p$. 
\end{proof}
We are now ready to equip $\mathrm{Prim}(\EuScript{S})$ with a topology \color{black} by defining a Kuratowski closure operator (see page 43 of \cite{Kell1975} for this notion)\color{black}. Recall that the concepts of \emph{hull} and \emph{kernel} have been introduced in Definition \ref{hullker}.\color{black}
\begin{proposition}\label{closureop}
The mapping
\begin{align*}
\overline{\phantom{A}}\colon \mathscr{P}(\mathrm{Prim}(\EuScript{S})) \longrightarrow \mathscr{P}(\mathrm{Prim}(\EuScript{S})), \quad A \mapsto \overline{A}:=\mathrm{hull}(\mathrm{ker}(A))
\end{align*}
defines a Kuratowski closure operator.
\end{proposition}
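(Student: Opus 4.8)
The plan is to verify the four Kuratowski axioms
($\overline{\emptyset}=\emptyset$, $A\subseteq\overline{A}$, $\overline{\overline{A}}=\overline{A}$, and $\overline{A\cup B}=\overline{A}\cup\overline{B}$) one by one, the only nontrivial one being finite additivity, for which Lemma \ref{firstlem} is exactly the tool needed. The underlying formal fact is that $\mathrm{hull}$ and $\mathrm{ker}$ form a Galois connection: both maps reverse inclusions, hence the composite $\mathrm{hull}\circ\mathrm{ker}$ preserves inclusions, and one always has the ``unit'' inequalities $A\subseteq\mathrm{hull}(\mathrm{ker}(A))$ for $A\subseteq\mathrm{Prim}(\EuScript{S})$ and $I\subseteq\mathrm{ker}(\mathrm{hull}(I))$ for $I\subseteq\mathrm{C}(K)$, both immediate from the definitions. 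I also record that $\mathrm{ker}$ of the empty family is $\mathrm{C}(K)$.

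For the three formal axioms: since every primitive $\EuScript{S}$-ideal is proper, $\overline{\emptyset}=\mathrm{hull}(\mathrm{ker}(\emptyset))=\mathrm{hull}(\mathrm{C}(K))=\emptyset$; extensivity $A\subseteq\overline{A}$ is just the first unit inequality. For idempotency, apply the antitone map $\mathrm{ker}$ to $A\subseteq\mathrm{hull}(\mathrm{ker}(A))$ to obtain $\mathrm{ker}(\mathrm{hull}(\mathrm{ker}(A)))\subseteq\mathrm{ker}(A)$, and apply the second unit inequality with $I=\mathrm{ker}(A)$ for the reverse inclusion; hence $\mathrm{ker}\circ\mathrm{hull}\circ\mathrm{ker}=\mathrm{ker}$, and composing with $\mathrm{hull}$ gives $\overline{\overline{A}}=\overline{A}$.

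It remains to prove $\overline{A\cup B}=\overline{A}\cup\overline{B}$. The inclusion ``$\supseteq$'' follows from $A,B\subseteq A\cup B$ and monotonicity of $\overline{\phantom{A}}$. For ``$\subseteq$'', if $A=\emptyset$ (respectively $B=\emptyset$) both sides reduce to $\overline{B}$ (respectively $\overline{A}$), so we may assume $A$ and $B$ nonempty. Then $\mathrm{ker}(A)=\bigcap_{p\in A}p$ and $\mathrm{ker}(B)=\bigcap_{p\in B}p$ are intersections of nonempty families of closed $\EuScript{S}$-invariant ideals, hence are closed $\EuScript{S}$-invariant ideals, and they are proper since each is contained in some primitive ideal; thus both are $\EuScript{S}$-ideals. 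Given $p\in\overline{A\cup B}=\mathrm{hull}(\mathrm{ker}(A\cup B))$ we have $\mathrm{ker}(A)\cap\mathrm{ker}(B)=\mathrm{ker}(A\cup B)\subseteq p$, so Lemma \ref{firstlem} forces $\mathrm{ker}(A)\subseteq p$ or $\mathrm{ker}(B)\subseteq p$, i.e. $p\in\mathrm{hull}(\mathrm{ker}(A))=\overline{A}$ or $p\in\overline{B}$.

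The substance of the proof is thus concentrated entirely in the ``$\subseteq$'' direction of additivity, and this is precisely what Lemma \ref{firstlem} (resting on Lemma \ref{lemmafixedfunct} and ergodicity) delivers; the remaining verifications are routine manipulations with the Galois connection between $\mathrm{hull}$ and $\mathrm{ker}$, exactly as in the C*-algebraic case.
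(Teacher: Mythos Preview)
Your proof is correct and follows essentially the same approach as the paper: the three easy axioms are dispatched formally, and finite additivity is reduced to Lemma \ref{firstlem} applied to $\mathrm{ker}(A)$ and $\mathrm{ker}(B)$. You are in fact slightly more careful than the paper, since you explicitly handle the case where $A$ or $B$ is empty (so that $\mathrm{ker}(A)$ or $\mathrm{ker}(B)$ would fail to be a proper ideal and Lemma \ref{firstlem} would not literally apply), and you spell out the Galois-connection mechanics behind idempotency rather than declaring it readily checked.
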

\begin{proof}
It is readily checked that 
\begin{align*}
\overline{\emptyset} = \emptyset, \, A \subset \overline{A} \textrm{ and } \overline{\overline{A}} = \overline{A}
\end{align*}
for each $A \subset \mathrm{Prim}(\EuScript{S})$. It remains to show that $\overline{A_1 \cup A_2} = \overline{A_1}\cup \overline{A_2}$ for all $A_1,A_2 \subset \mathrm{Prim}(\EuScript{S})$.
Applying Lemma \ref{firstlem} to the ideals $I_j \defeq \mathrm{ker}(A_j)$ for $j=1,2$ yields
\begin{align*}
\overline{A_1 \cup A_2} &= \mathrm{hull}(\mathrm{ker}(A_1 \cup A_2)) = \mathrm{hull}(\mathrm{ker}(A_1)\cap \mathrm{ker}(A_2)) \\
&= \{p \in \mathrm{Prim}(\EuScript{S})\mid \mathrm{ker}(A_1)\cap \mathrm{ker}(A_2) \subset p\}\\
&=  \{p \in \mathrm{Prim}(\EuScript{S})\mid \mathrm{ker}(A_1) \subset p \textrm{ or } \mathrm{ker}(A_2) \subset p\} = \overline{A_1}\cup \overline{A_2}\color{black},
\end{align*}
\color{black} which shows the claim. \color{black}  
\end{proof}
\begin{definition}
The topology on $\mathrm{Prim}(\EuScript{S})$ induced by the closure operator of Proposition \ref{closureop} is called the \emph{hull-kernel-topology}.
\end{definition}
We from now on equip $\mathrm{Prim}(\EuScript{S})$ with the hull-kernel-topology.
\begin{proposition}\label{propertiestop}
The following assertions are valid.
\begin{enumerate}[(i)]
\item The mappings
\begin{align*}
\{\emptyset \neq A \subset \mathrm{Prim}(\EuScript{S})\, \mathrm{closed}\} &\leftrightarrow \mathrm{Rad}(\EuScript{S})\\
A &\mapsto \ker(A)\\
\mathrm{hull}(I) &\mapsfrom I
\end{align*}
are mutually inverse bijections.
\item The sets 
\begin{align*}
U_f\defeq \{p \in \mathrm{Prim}(\EuScript{S})\mid f \notin p\}
\end{align*}
for $f \in \mathrm{C}(K)$ define a base for the hull-kernel-topology of $\mathrm{Prim}(\EuScript{S})$.
\item If $K$ is metrizable, then $\mathrm{Prim}(\EuScript{S})$ has a countable base.
\item The space $\mathrm{Prim}(\EuScript{S})$ is $\mathrm{T}_0$. Given $p \in \mathrm{Prim}(\EuScript{S})$, the set $\{p\}$ is closed if and only if $p$ is a maximal $\EuScript{S}$-ideal.
\item The space $\mathrm{Prim}(\EuScript{S})$ is quasi-compact.
\item The mapping 
\begin{align*}
\pi \colon \mathrm{ex}\,\mathrm{P}_{\EuScript{S}}(K) \longrightarrow \mathrm{Prim}(\EuScript{S}),\quad \mu \mapsto I_{\mu}
\end{align*}
is continuous and surjective. 
\end{enumerate}
\end{proposition}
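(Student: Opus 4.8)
The plan is to dispatch the six items in turn; everything rests on the formal hull--kernel calculus of Definition~\ref{hullker} and the closure operator $\overline A=\mathrm{hull}(\ker A)$ of Proposition~\ref{closureop}, together with two facts recalled in Section~\ref{sec:2}: every $\EuScript{S}$-ideal lies in a maximal $\EuScript{S}$-ideal, and every maximal $\EuScript{S}$-ideal is primitive (Corollary~\ref{maxprim}). For (i), if $\emptyset\neq A$ is closed then $\ker A=\bigcap_{p\in A}p$ is proper, closed and $\EuScript{S}$-invariant, hence an $\EuScript{S}$-ideal, and it is radical since $\mathrm{rad}_{\EuScript{S}}(\ker A)=\ker(\mathrm{hull}(\ker A))=\ker(\overline A)=\ker A$; conversely, for $I\in\mathrm{Rad}(\EuScript{S})$ the set $\mathrm{hull}(I)$ is nonempty (otherwise $I=\mathrm{rad}_{\EuScript{S}}(I)=\mathrm{C}(K)$) and closed (its closure is $\mathrm{hull}(\mathrm{rad}_{\EuScript{S}}(I))=\mathrm{hull}(I)$), while $\ker(\mathrm{hull}(I))=\mathrm{rad}_{\EuScript{S}}(I)=I$; so the two maps are mutually inverse.

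For (ii), each $U_f$ is open because its complement $\{p\mid f\in p\}=\mathrm{hull}(\{f\})$ is a hull, hence closed, while for open $V$ with closed complement $A=\mathrm{hull}(\ker A)$ one has $V=\{p\mid\ker A\not\subseteq p\}=\bigcup_{f\in\ker A}U_f$; thus the $U_f$ form a base. For (iii), when $K$ is metrizable I would fix a countable base $(O_m)_{m\in\N}$ of $K$ by nonempty open sets and pick $f_m\in\mathrm{C}(K)$ with $\{f_m\neq0\}=O_m$; using that $p\in U_g$ iff $\supp p\cap\{g\neq0\}\neq\emptyset$ depends monotonically on the cozero set of $g$, every $U_f$ is the union of those $U_{f_m}$ with $O_m\subseteq\{f\neq0\}$, so $(U_{f_m})_{m\in\N}$ is a countable base.

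For (iv), the space is $\mathrm{T}_0$ because two distinct primitive ideals differ in some $f\in\mathrm{C}(K)$, which separates them via $U_f$; moreover $\overline{\{p\}}=\mathrm{hull}(p)=\{q\in\mathrm{Prim}(\EuScript{S})\mid p\subseteq q\}$, and this equals $\{p\}$ exactly when no $\EuScript{S}$-ideal lies strictly above $p$ --- since such an ideal would sit inside a maximal, hence primitive, $\EuScript{S}$-ideal strictly above $p$ --- i.e.\ exactly when $p$ is a maximal $\EuScript{S}$-ideal. For (v) I would use the finite intersection property: each nonempty closed $A\subseteq\mathrm{Prim}(\EuScript{S})$ has the form $\{p\mid\supp p\subseteq L_A\}$ with $L_A\defeq\supp\ker A$ self-supporting; given such sets with the finite intersection property, the finite intersections $\{p\mid\supp p\subseteq\bigcap L_A\}$ are nonempty, so the $L_A$ have the finite intersection property in the compact space $K$ and $L\defeq\bigcap_A L_A\neq\emptyset$; an arbitrary intersection of self-supporting sets is again self-supporting, so $I_L$ is a proper $\EuScript{S}$-ideal, contained in a maximal --- hence, by Corollary~\ref{maxprim}, primitive --- $\EuScript{S}$-ideal $M$, and $\supp M\subseteq L\subseteq L_A$ places $M$ in every $A$; hence $\bigcap_A A\neq\emptyset$.

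Finally, for (vi), surjectivity of $\pi$ is exactly the definition of a primitive ideal, and continuity follows from the base in (ii) since $\pi^{-1}(U_f)=\{\mu\in\mathrm{ex}\,\mathrm{P}_{\EuScript{S}}(K)\mid\langle|f|,\mu\rangle>0\}$ is relatively weak*-open, the map $\mu\mapsto\langle|f|,\mu\rangle$ being weak*-continuous. I expect part (v) to be the only real obstacle: a direct argument for quasi-compactness via the cozero sets $\{f\neq0\}$ is not obviously available, and the point is to realize that an arbitrary intersection of self-supporting sets is again self-supporting (a probability measure carried by each of the sets is carried by their intersection), which is precisely what lets $I_L$ be fed into the existence of a maximal --- therefore primitive --- $\EuScript{S}$-ideal.
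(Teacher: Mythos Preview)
Your proof is correct. Parts (i)--(iv) and (vi) follow the same path as the paper, though you supply more detail: the paper dismisses (i) as obvious, reduces (iii) to a one-line remark that it follows from (ii), and argues (iv) exactly as you do via $\overline{\{p\}}=\mathrm{hull}(p)$ and Corollary~\ref{maxprim}.

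Part (v) is genuinely different. The paper argues algebraically: given closed $A_j$ with $\bigcap_j A_j=\emptyset$ and $I_j=\ker A_j$, it shows $\sum_j I_j=\mathrm{C}(K)$ (otherwise the sum sits inside a maximal, hence primitive, $\EuScript{S}$-ideal lying in every $A_j$), so $\mathbbm{1}$ is a finite sum from $I_{j_1}+\cdots+I_{j_k}$, forcing $\bigcap_{m=1}^k A_{j_m}=\emptyset$. Your route is topological: you pass to the self-supporting supports $L_A\subset K$, use the finite intersection property there and compactness of $K$ to get $L=\bigcap_A L_A\neq\emptyset$, observe that an intersection of self-supporting sets is self-supporting, and then feed $I_L$ into Corollary~\ref{maxprim}. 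Both arguments hinge on the same existence result (a proper $\EuScript{S}$-ideal lies in a maximal, hence primitive, one); the paper's version never leaves the ideal lattice and yields the finite subfamily directly from the decomposition of $\mathbbm{1}$, while yours makes explicit use of the compactness of $K$ and the stability of self-supporting sets under intersection. Either is perfectly adequate here.
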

\begin{proof}
Assertion (i) is obvious. \color{black} For (ii) observe that \color{black} $\mathrm{Prim}(\EuScript{S}) = U_{\mathbbm{1}}$. Now take a closed set $\emptyset \neq A \subset \mathrm{Prim}(\EuScript{S})$. Then $A = \mathrm{ker}(I)$ for some $\EuScript{S}$-ideal $I$ and we obtain
\begin{align*}
\mathrm{Prim}(\EuScript{S})\setminus A = \bigcup_{f \in I} \{p \in \mathrm{Prim}(\EuScript{S})\mid f \notin p\} =  \bigcup_{f \in I} U_f.
\end{align*}
Moreover, each $U_f$ is open since $\mathrm{Prim}(\EuScript{S}) \setminus U_f = \mathrm{hull}(\{f\})$.
This proves (ii)\color{black}.  Assertion (iii) is a direct consequence of (ii).\color{black}\smallskip\\
\color{black} We proceed with part (iv) and prove that $\mathrm{Prim}(\EuScript{S})$ is a $\mathrm{T}_0$-space. \color{black} If $p_1, p_2 \in \mathrm{Prim}(\EuScript{S})$ with $p_1\neq  p_2$, then  $M_1 \neq M_2$ for the supports $M_i \defeq \supp p_i$, $i=1,2$. We may assume that there is $x \in M_2 \setminus M_1$ and find $f \in \mathrm{C}(K)$ with $f|_{M_1} = 0$ and $f(x) =1$. Then $p_1 \notin U_{f}$ and $p_2 \in U_{f}$.\\
\color{black} For the second part of (iv) take \color{black} a maximal $\EuScript{S}$-ideal and assume that $p \in \overline{\{m\}}$. Then $m \subset p$ and thus $m=p$ by maximality of $m$.\\
Conversely, suppose that $\{m\}$ is closed and take a maximal $\EuScript{S}$-ideal $p$ with $m \subset p$. Then $\mathrm{ker}(\{m\}) = m \subset p$ and thus
\begin{align*}
p \in \mathrm{hull}(\mathrm{ker}(\{m\})) = \overline{\{m\}},
\end{align*}
i.e., $p = m$.\smallskip\\
\color{black} For the proof of (v) take \color{black} closed subsets $A_j \subset \mathrm{Prim}(\EuScript{S})$ for $j \in J$ with
\begin{align*}
\bigcap_{j \in J} A_j = \emptyset
\end{align*}
and let $I_j\defeq \mathrm{ker}(A_j)$ be the corresponding radical ideals for $j\in J$. We show that
\begin{align*}
\sum_{j \in J} I_j = \mathrm{C}(K).
\end{align*}
Denote the ideal on the left side by $I$ and assume that it is a proper invariant ideal. Since there are no dense ideals in $\mathrm{C}(K)$, the closure $\overline{I}$ is contained in a maximal $\EuScript{S}$-ideal $p$. But then $p \in A_j$ for each $j \in J$ since the sets $A_j$ are closed, a contradiction.\\
Take $j_1,...,j_k$ with $1 \in I_{j_1} + ... + I_{j_k}$ for some $k \in \N$. Then
\begin{align*}
\sum_{m =1}^k I_{j_m} = \mathrm{C}(K)
\end{align*}
and consequently 
\begin{align*}
\bigcap_{m=1}^k A_{j_m} = \emptyset.
\end{align*}
\smallskip\\
\color{black} Finally, assertion (vi) follows from the fact that the set
\begin{align*}
\pi^{-1}(U_f) = \{\mu \in \mathrm{ex}\,\mathrm{P}_{\EuScript{S}}(K)\mid \langle |f|,\mu\rangle \neq 0\}
\end{align*} 
is open in $\mathrm{ex}\,\mathrm{P}_{\EuScript{S}}(K)$ for each $f \in \mathrm{C}(K)$.\color{black} 
\end{proof}
\color{black}
\begin{corollary}\label{easyconv}
A net $(p_\alpha)_{\alpha \in A}$ in $\mathrm{Prim}(\EuScript{S})$ converges to $p \in \mathrm{Prim}(\EuScript{S})$ if and only if for each open set $U \subset K$ with $\supp p \cap U \neq \emptyset$ there is $\alpha_0 \in A$ with $\supp p_\alpha \cap U \neq \emptyset$ for every $\alpha \geq \alpha_0$.
\end{corollary}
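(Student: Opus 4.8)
The plan is to unwind the definition of convergence in the hull-kernel topology by means of the basis $\{U_f \mid f \in \mathrm{C}(K)\}$ provided by Proposition \ref{propertiestop}~(ii), and to rewrite membership in $U_f$ in terms of supports. The crucial observation is that for $f \in \mathrm{C}(K)$ and $p = I_\mu \in \mathrm{Prim}(\EuScript{S})$ we have $f \notin p$ if and only if $\langle |f|,\mu\rangle \neq 0$, and since $\{f = 0\}$ is closed and $\supp p = \supp \mu$, this happens exactly when $\supp p \cap \{f \neq 0\} \neq \emptyset$. Hence
\begin{align*}
U_f = \{p \in \mathrm{Prim}(\EuScript{S}) \mid \supp p \cap \{f \neq 0\} \neq \emptyset\}.
\end{align*}
Because a net converges to $p$ precisely when it is eventually contained in every basic neighborhood $U_f$ of $p$, the corollary will follow once these basic neighborhoods are matched against the open sets $U \subset K$ meeting $\supp p$.

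For the ``only if'' direction I would assume $p_\alpha \to p$ and take an open set $U \subset K$ with $\supp p \cap U \neq \emptyset$; choose $x \in \supp p \cap U$. Since $K$ is compact Hausdorff, Urysohn's lemma applied to the disjoint closed sets $\{x\}$ and $K \setminus U$ yields $f \in \mathrm{C}(K)$ with $f(x) = 1$ and $f \equiv 0$ on $K \setminus U$, so $\{f \neq 0\} \subseteq U$ and $x \in \supp p \cap \{f \neq 0\}$, i.e.\ $p \in U_f$. Convergence then gives $\alpha_0$ with $p_\alpha \in U_f$, that is $\supp p_\alpha \cap \{f \neq 0\} \neq \emptyset$, for all $\alpha \geq \alpha_0$; since $\{f \neq 0\} \subseteq U$ this forces $\supp p_\alpha \cap U \neq \emptyset$ for $\alpha \geq \alpha_0$.

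For the ``if'' direction I would assume the support condition and let $U_f$ be any basic neighborhood of $p$. Then $f \notin p$, so by the description above $U \defeq \{f \neq 0\}$ is an open subset of $K$ with $\supp p \cap U \neq \emptyset$. The hypothesis supplies $\alpha_0$ with $\supp p_\alpha \cap \{f \neq 0\} \neq \emptyset$ for all $\alpha \geq \alpha_0$, i.e.\ $p_\alpha \in U_f$ for $\alpha \geq \alpha_0$. As the sets $U_f$ form a base for the hull-kernel topology, this shows $p_\alpha \to p$.

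The only slightly delicate point, and the place where some care is needed, is the translation of $f \notin p$ into the support language: one uses that $\langle |f|,\mu\rangle = 0$ is equivalent to $\supp \mu \subseteq \{f = 0\}$ (since $f$ is continuous, $\{f = 0\}$ is closed, and the support is the smallest closed set of full measure), together with the identification $\supp p = \supp \mu$ for $p = I_\mu$. Everything else is a routine manipulation of the basis $\{U_f\}$ and the fact that cozero sets form a base for the topology of the completely regular space $K$.
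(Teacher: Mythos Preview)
Your proposal is correct and follows essentially the same route as the paper: both translate membership in the basic open sets $U_f$ into the condition $\supp p \cap \{f \neq 0\} \neq \emptyset$ and then use that the cozero sets $\{f \neq 0\}$ form a base for the topology of $K$. The paper's proof is just a more compressed version of yours, invoking the base property directly rather than spelling out the Urysohn argument.
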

\begin{proof}
Consider the sets $V_f \defeq \{x \in K \mid f(x) \neq 0\}$ for $f \in \mathrm{C}(K)$. Proposition \ref{propertiestop} (ii) shows that a net $(p_\alpha)_{\alpha \in A}$ in $\mathrm{Prim}(\EuScript{S})$ converges to $p \in \mathrm{Prim}(\EuScript{S})$ if and only if for each $f \in \mathrm{C}(K)$ with $f|_{\supp p} \neq 0$ there is $\alpha_0 \in A$ with $f|_{\supp(p_\alpha)} \neq 0$ for every $\alpha \geq \alpha_0$, i.e., for each $f \in \mathrm{C}(K)$ with $V_f \cap \supp p \neq \emptyset$ there is $\alpha_0$ with $V_f \cap \supp p_\alpha \neq \emptyset$ for every $\alpha \geq \alpha_0$. Since the sets $V_f$ are a base of the topology of $K$, this shows the claim.
\end{proof}
\color{black}
\begin{example}\label{PrimExamples}
\begin{enumerate}[(i)]
\item For the trivial semigroup $\EuScript{S} = \{\mathrm{Id}\}$ every ideal is invariant and $\mathrm{Prim}(\EuScript{S})$ coincides with the maximal ideal space of the commutative $\mathrm{C}^*$-algebra $\mathrm{C}(K)$, i.e., it is homeomorphic to $K$.
\item Consider the torus $K = \T =\color{black} \{z \in \C\mid |z| = 1\}$ and the rotation $\varphi_a(z):= az$ for $z \in \T$ and some fixed $a \in \T$ with $a^k = 1$  for some $k \in \N$. \color{black} Denote the group of $k$th roots of unity by $G_k$. \color{black} The ergodic measures are then precisely the measures $\mu_b \in \mathrm{P}_{\EuScript{S}_\varphi}(K)$ with $\mu_b \defeq \frac{1}{k}\sum_{j=0}^{k-1}\delta_{a^jb}$ for $b \in \T$. Their supports are clearly the sets
\begin{align*}
M_b\defeq bG_k=\{bz  \mid z \in \T, \, z^k = 1\} 
\end{align*}
for $b \in \T$. Using Corollary \ref{easyconv} a moment's thought reveals that 
\begin{align*}
\T/G_k \longrightarrow \mathrm{Prim}(\EuScript{S}_\varphi), \quad bG_k \mapsto I_{M_b}
\end{align*}
is a homeomorphism \color{black} if we endow the factor group $\T/G_k$ with the quotient topology.
\item Consider the space $L\defeq \{0,1\}^\N$ and the shift $\varphi\colon L \longrightarrow L$ given by $\varphi((x_n)_{n \in \N})\defeq (x_{n+1})_{n \in \N}$ for each $(x_n)_{n \in \N} \in L$.
For each $k \in \N$ consider the minimal non-empty closed invariant \color{black} set
\begin{align*}
M_k \defeq \{\varphi^n(x^k)\mid n \in \{0,...,2k-1\}\}
\end{align*}
with $x^k=(x_m^k)_{m \in \N}$ defined by
\begin{align*}
x_m^k \defeq \begin{cases} 0 & \textrm{if } m \in \{1,...,k\}+ 2k \N_0,\\
1 & \textrm{else}.
\end{cases}
\end{align*}
Now  if
\begin{align*}
K \defeq \overline{\bigcup_{k \in \N}M_k},
\end{align*}
then it is readily seen that $K$ is the invariant set
\begin{align*}
\bigcup_{k \in \N}M_k \cup \{(x_m)_{m \in \N} \in L\mid (x_m)_{m \in \N} \textrm{ increasing or decreasing}\}.
\end{align*}
We restrict $\varphi$ to $K$ and \color{black} claim that $\mathrm{Prim}(\EuScript{S}_\varphi\color{black})$ is not Hausdorff. It suffices to show that the sequence $(m_n)_{n \in \N}$ in $\mathrm{Prim}(\EuScript{S}_\varphi\color{black})$ with $m_n \defeq I_{M_n}$ converges to two different points.\\
To this end, consider $k \in \N$ and the open subset
\begin{align*}
U \defeq \left(\prod_{i=1}^k \{1\} \times \prod_{i=k+1}^\infty \{0,1\}\right) \cap K.
\end{align*}
\color{black} Then $M_l \cap U \neq \emptyset$ for each $l \geq k$. \color{black} By Remark \ref{easyconv} \color{black} this implies $m_l \rightarrow I_{\{(1)_{n \in \N}\}}\color{black}$ and a similar argument shows $m_l \rightarrow I_{\{(0)_{n \in \N}\}}\color{black}$.
\end{enumerate}
\end{example}

\section{Continuous Functions on the Primitive Spectrum}
\label{sec:6}
It is our goal to describe the continuous functions on $\mathrm{Prim}(\EuScript{S})$. As above we write $\mathrm{M}(\EuScript{S})$ for the support of $\mathrm{rad}_{\EuScript{S}}(0)$, i.e., the closure of the union of all supports of invariant ergodic measures, and recall that the semigroup on $\mathrm{C}(\mathrm{M}(\EuScript{S}))$ induced by $\EuScript{S}$ is denoted by $\EuScript{S}_{\mathrm{rad}_{\EuScript{S}}(0)}$. Now consider the following functions.
\begin{definition}
For a function $f \in \fix(\EuScript{S}_{\mathrm{rad}_{\EuScript{S}}(0)})$ we define 
\begin{align*}
\hat{f}\colon \mathrm{Prim}(\EuScript{S}) \longrightarrow \C, \quad I_\mu \mapsto \int_{\mathrm{M}(\EuScript{S})} f\, \mathrm{d}\mu.
\end{align*}
\end{definition}

Note that each $f \in \fix(\EuScript{S}_{\mathrm{rad}_{\EuScript{S}}(0)})$ is constant on supports of ergodic measures and therefore $\int_{\mathrm{M}(\EuScript{S})} f\, \mathrm{d}\mu$ only depends on $I_\mu$ and \color{black} not on $\mu$ itself\color{black}. Thus $\hat{f}$ is in fact well-defined for each $f \in \fix(\EuScript{S}_{\mathrm{rad}_{\EuScript{S}}(0)})$ and the next lemma shows that it is even continuous.\color{black}
\begin{lemma}
If $f \in \fix(\EuScript{S}_{\mathrm{rad}_{\EuScript{S}}(0)})$, \color{black} then $\hat{f} \in \mathrm{C}(\mathrm{Prim}(\EuScript{S}))$.
\end{lemma}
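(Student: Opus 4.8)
The plan is to show directly that $\hat f$ is continuous at each point $p_0 = I_{\mu_0} \in \mathrm{Prim}(\EuScript{S})$ by testing continuity against nets, using the convergence criterion of Corollary \ref{easyconv}. So I would take a net $(p_\alpha)_{\alpha \in A}$ in $\mathrm{Prim}(\EuScript{S})$ with $p_\alpha \to p_0$, write $p_\alpha = I_{\mu_\alpha}$ for ergodic measures $\mu_\alpha \in \mathrm{ex}\,\mathrm{P}_{\EuScript{S}}(K)$ (and similarly $p_0 = I_{\mu_0}$), and aim to prove $\int_{\mathrm{M}(\EuScript{S})} f\,\mathrm{d}\mu_\alpha \to \int_{\mathrm{M}(\EuScript{S})} f\,\mathrm{d}\mu_0$. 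The key observation is that $f \in \fix(\EuScript{S}_{\mathrm{rad}_{\EuScript{S}}(0)})$ is constant on the support of every ergodic measure: if $\nu \in \mathrm{ex}\,\mathrm{P}_{\EuScript{S}}(K)$, then by Proposition \ref{subsystem} and Proposition \ref{ergodicity} the restriction of $\nu$ to $\mathrm{M}(\EuScript{S})$ (its support lies in $\mathrm{M}(\EuScript{S})$) is ergodic for $\EuScript{S}_{\mathrm{rad}_{\EuScript{S}}(0)}$, and $f$ being fixed in $\mathrm{L}^1$ of an ergodic measure forces $f$ to be $\nu$-a.e.\ constant; since $f$ is continuous and $\supp\nu$ is the topological support, $f$ is genuinely constant on $\supp\nu$, with value $\hat f(I_\nu)$.

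Using this, I would translate the problem to the quotient $\mathrm{C}(\mathrm{M}(\EuScript{S}))$: by Lemma \ref{lemmaquotient} (applied to $I = \mathrm{rad}_{\EuScript{S}}(0)$) the supports of the $p_\alpha$ all lie in $\mathrm{M}(\EuScript{S})$, and restricting $\mathrm{Prim}(\EuScript{S})$ to $\{p \in \mathrm{Prim}(\EuScript{S}) \mid \mathrm{rad}_{\EuScript{S}}(0) \subseteq p\} = \mathrm{Prim}(\EuScript{S})$ is a homeomorphism onto $\mathrm{Prim}(\EuScript{S}_{\mathrm{rad}_{\EuScript{S}}(0)})$. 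So I may assume $\EuScript{S}$ is radical free, i.e.\ $K = \mathrm{M}(\EuScript{S})$, and that $f \in \fix(\EuScript{S})$. Now for a radical free semigroup any $f \in \fix(\EuScript{S})$ is already continuous on $K$ and constant on each $\supp p$; the cleanest way to finish is to express $\hat f$ via the base $U_g$ of Proposition \ref{propertiestop} (ii). Concretely, I would show that for every $c \in \C$ and $\varepsilon > 0$ the set $\{p \mid |\hat f(p) - c| < \varepsilon\}$ is open: this set equals $\{p \mid$ the constant value of $f$ on $\supp p$ lies in $B(c,\varepsilon)\}$, which I can write as a union of basic open sets $U_g$ by choosing, for each relevant $p$, a function $g \in \mathrm{C}(K)$ supported in the open set $\{x : |f(x) - c| < \varepsilon\}$ with $g$ not vanishing on $\supp p$; then $U_g$ is contained in the level set because any $p' \in U_g$ has a point of its support in $\{|f - c| < \varepsilon\}$, and $f$ is constant on $\supp p'$, forcing $\hat f(p') \in B(c,\varepsilon)$.

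The main obstacle is the last step: verifying that the level sets $\{p : |\hat f(p) - c| < \varepsilon\}$ are genuinely open, which hinges on the fact that "$\supp p'$ meets the open set $\{|f - c| < \varepsilon\}$" upgrades, via constancy of $f$ on $\supp p'$, to "$f \equiv \hat f(p')$ on $\supp p'$ with $\hat f(p') \in B(c,\varepsilon)$". One has to be slightly careful that $\{x : |f(x) - c| < \varepsilon\}$ need not be all of $\mathrm{M}(\EuScript{S})$ and that we only control $f$ up to a.e.\ equality a priori — this is why the preliminary reduction to continuity of the representative $f$ (using that in the radical free case $f \in \fix(\EuScript{S}) \subset \mathrm{C}(K)$ literally) matters. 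Everything else — well-definedness (already noted before the lemma), the homeomorphism reduction, and the net/base manipulations — is routine.
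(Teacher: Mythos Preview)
Your argument is correct and shares the paper's core idea: use that $f$ is constant on each $\supp p$, and exhibit, for each $p$ and each $\varepsilon>0$, a basic open set $U_g$ containing $p$ on which $|\hat f - \hat f(p)| < \varepsilon$. The paper, however, does this in one stroke without any reduction to the radical free case: it simply sets
\[
f_\varepsilon \defeq \sup\bigl(\varepsilon\cdot\mathbbm{1} - |f - \hat f(p)\cdot\mathbbm{1}|,\,0\bigr)\big|_{\mathrm{M}(\EuScript{S})} \in \mathrm{C}(\mathrm{M}(\EuScript{S})),
\]
which is exactly your Urysohn-type function $g$ written down explicitly, and checks that $U_{f_\varepsilon}$ works. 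Your detour through the homeomorphism $\mathrm{Prim}(\EuScript{S}) \cong \mathrm{Prim}(\EuScript{S}_{\mathrm{rad}_{\EuScript{S}}(0)})$ is logically sound (its proof uses only Lemma~\ref{lemmaquotient} and Proposition~\ref{propertiestop}, not the present lemma), but in the paper that homeomorphism is Lemma~\ref{lemmahomeo} and is stated only \emph{after} the present result, so invoking it here would reorder the exposition. The net-based opening of your plan is never actually used; you can drop it and go straight to the level-set argument.
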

\begin{proof}
Let $p = I_{\mu} \in \mathrm{Prim}(\EuScript{S})$ and $\varepsilon > 0$. We set
\begin{align*}
f_{\varepsilon}\defeq \sup\left(\varepsilon \cdot \mathbbm{1} - \left|f-\int_{\mathrm{M}(\EuScript{S})} f\, \mathrm{d}\mu \cdot \mathbbm{1}\right|,0\right)\Bigr|_{\mathrm{M}(\EuScript{S})} \in \mathrm{C}(\mathrm{M}(\EuScript{S})).
\end{align*}
Then $U\defeq U_{f_{\varepsilon}}$ is an open neighborhood of $p$. Moreover, for each $q = I_{\nu} \in U$
\begin{align*}
\varepsilon - \left|\int_{\mathrm{M}(\EuScript{S})} f\, \mathrm{d}\nu - \int_{\mathrm{M}(\EuScript{S})} f\, \mathrm{d}\mu\right| > 0
\end{align*}
which means $|\hat{f}(p) - \hat{f}(q)| < \varepsilon$. 
\end{proof}
In the radical free case (i.e., $\mathrm{M}(\EuScript{S}) = K$) we now obtain a linear mapping from the fixed space $\fix(\EuScript{S})$ to $\mathrm{C}(\mathrm{Prim}(\EuScript{S}))$. It turns out that this is actually an isomorphism.
\begin{theorem}\label{contfunct1}
If $\EuScript{S}$ is radical free, then the fixed space $\fix(\EuScript{S})$ is a Banach sublattice of $\mathrm{C}(K)$ and the mapping
\begin{align*}
\hat{\,}\colon\fix(\EuScript{S}) \longrightarrow \mathrm{C}(\mathrm{Prim}(\EuScript{S})), \quad f \mapsto \hat{f}
\end{align*}
is an isometric  Markov \color{black} lattice isomorphism. 
\end{theorem}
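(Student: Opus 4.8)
The plan is to prove the statement in four steps: reduce to the radical-free situation, show $\fix(\EuScript{S})$ is a Banach sublattice, show $\hat{\,}$ is an isometric unital lattice homomorphism (hence injective), and — the substantial part — show it is onto. First note that radical freeness means $\mathrm{M}(\EuScript{S}) = \supp\mathrm{rad}_{\EuScript{S}}(0) = K$, so the domain of $\hat{\,}$ is genuinely $\fix(\EuScript{S})$, and by Remark \ref{capinv} we have $\bigcap_{\mu\in\mathrm{P}_{\EuScript{S}}(K)}I_\mu = \mathrm{rad}_{\EuScript{S}}(0) = 0$. For the sublattice claim, given $f\in\fix(\EuScript{S})$ and $S\in\EuScript{S}$, positivity of $S$ gives $|f| = |Sf|\le S|f|$, while $\langle S|f|-|f|,\mu\rangle = \langle|f|,S'\mu-\mu\rangle = 0$ for every invariant $\mu$; hence the nonnegative function $S|f|-|f|$ lies in every $I_\mu$, thus in $\mathrm{rad}_{\EuScript{S}}(0) = 0$, and $|f|\in\fix(\EuScript{S})$. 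So $\fix(\EuScript{S})$ is a closed sublattice of $\mathrm{C}(K)$ containing $\mathbbm{1}$. Next, for $f\in\fix(\EuScript{S})$ and ergodic $\mu$ the class of $f$ in $\mathrm{C}(K)/I_\mu$ lies in $\fix(\EuScript{S}_\mu) = \C\mathbbm{1}$, so $f$ is constant on $\supp\mu$ with value $\int f\,\mathrm{d}\mu = \hat f(I_\mu)$; this makes $\hat{\,}$ a well-defined linear unital lattice homomorphism (for instance $\widehat{|f|}(I_\mu) = \int|f|\,\mathrm{d}\mu = |\hat f(I_\mu)|$). It is isometric because $\bigcup_{\mu\in\mathrm{ex}\,\mathrm{P}_{\EuScript{S}}(K)}\supp\mu$ is dense in $K$: then $\|\hat f\| = \sup_\mu|\int f\,\mathrm{d}\mu| = \sup\{\,|f(x)| : x\in\supp\mu\,\} = \|f\|$. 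In particular $\hat{\,}$ is injective with closed range.

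For surjectivity I would argue directly, producing a preimage of a given $g\in\mathrm{C}(\mathrm{Prim}(\EuScript{S}))$. Using $\mathbbm{1}\in\fix(\EuScript{S})$ and treating real and imaginary parts separately, it suffices to handle a real $g$ with $\min g = 0$ and $\max g = 1$ (the bounds are attained since $\mathrm{Prim}(\EuScript{S})$ is quasi-compact). For $t\in[0,1]$ set $L^-_t := \supp\ker\{g\le t\}$ and $L^+_t := \supp\ker\{g\ge t\}$; these are closed self-supporting sets, with $L^-_t$ increasing and $L^+_t$ decreasing in $t$. The two structural facts I would establish are that $L^-_t\cup L^+_t = K$ for every $t\in[0,1]$, and that $L^-_s\cap L^+_t = \emptyset$ whenever $s<t$. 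For the first, $\{g\le t\}\cup\{g\ge t\} = \mathrm{Prim}(\EuScript{S})$ gives $I_{L^-_t\cup L^+_t} = I_{L^-_t}\cap I_{L^+_t} = \ker(\mathrm{Prim}(\EuScript{S})) = \mathrm{rad}_{\EuScript{S}}(0) = 0$, so $L^-_t\cup L^+_t$ is dense, hence all of $K$. For the second, if $L^-_s\cap L^+_t\neq\emptyset$ then the $\EuScript{S}$-ideal $I_{L^-_s\cap L^+_t}$ contains $\ker\{g\le s\} + \ker\{g\ge t\}$ and lies in some maximal, hence by Corollary \ref{maxprim} primitive, $\EuScript{S}$-ideal $p$, so $p\in\mathrm{hull}(\ker\{g\le s\} + \ker\{g\ge t\}) = \{g\le s\}\cap\{g\ge t\} = \emptyset$ — a contradiction. (It is in the first identity that radical freeness is used.)

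Now define $f(x) := \inf\{t\in[0,1] : x\in L^-_t\}$. Using the two facts above one verifies that also $f(x) = \sup\{t\in[0,1] : x\in L^+_t\}$, and from this twofold description the three remaining points follow. $f\in\fix(\EuScript{S})$: for $S\in\EuScript{S}$ and $x\in K$, whenever $x\in L^-_s$ the probability measure $S'\delta_x$ is supported in $L^-_s$, on which $f\le s$, so $Sf(x) = \int f\,\mathrm{d}(S'\delta_x)\le s$; letting $s\downarrow f(x)$ gives $Sf\le f$, and the symmetric argument with the $L^+_s$ gives $Sf\ge f$. $f$ is continuous, since $\{f\le t\} = \bigcap_{s>t}L^-_s$ and $\{f\ge t\} = \bigcap_{s<t}L^+_s$ are intersections of closed sets. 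Finally $\hat f = g$: for ergodic $\nu$ every $x\in\supp\nu$ lies in $L^-_{g(I_\nu)}\cap L^+_{g(I_\nu)}$, so $f\equiv g(I_\nu)$ on $\supp\nu$ and $\hat f(I_\nu) = \int f\,\mathrm{d}\nu = g(I_\nu)$; since every primitive ideal is such an $I_\nu$, $\hat f = g$.

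The hard part is exactly this last step: pinning down the correct candidate $f$ and showing simultaneously that it is $\EuScript{S}$-invariant and continuous. Both properties rest on the pair of identities for the level-set supports $L^\pm_t$, and it is there — specifically in $L^-_t\cup L^+_t = K$ — that radical freeness is essential. Everything else (the sublattice property, the homomorphism and isometry statements, and the evaluation $\hat f = g$ on ergodic measures) is routine once one has the constancy of fixed functions on supports of ergodic measures and the density of the union of those supports.
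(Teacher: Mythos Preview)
Your proof is correct. The sublattice and isometry steps differ only cosmetically from the paper's: the paper deduces $S|f|=|f|$ from one-dimensionality of $\fix(\EuScript{S}_p)$ on each $\supp p$ and obtains the norm equality via Sine's theorem on the maximum set $\{|f|=\|f\|\}$, whereas you use $S|f|-|f|\in\bigcap_\mu I_\mu=\mathrm{rad}_{\EuScript{S}}(0)=0$ and density of $\bigcup_\mu\supp\mu$ directly. Both are immediate from radical freeness.

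The surjectivity argument, however, is genuinely different. The paper follows the classical Dauns--Hofmann route: for each $n\in\N$ it covers $\mathrm{Prim}(\EuScript{S})$ by the open sets $U_{k,n}=\{(k-1)/n<g<(k+1)/n\}$, shows that the corresponding ideals $I_{k,n}$ (with $U_{k,n}^c=\mathrm{hull}(I_{k,n})$) satisfy $\sum_k I_{k,n}=\mathrm{C}(K)$, extracts a positive partition of unity $\sum_k f_{k,n}=\mathbbm{1}$ with $f_{k,n}\in I_{k,n}$, and sets $g_n=\sum_k(k/n)f_{k,n}$; the sequence $(g_n)$ is uniformly Cauchy on the dense set $\bigcup_p\supp p$, and its limit is the desired preimage. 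Your layer-cake construction $f(x)=\inf\{t:x\in L^-_t\}=\sup\{t:x\in L^+_t\}$ produces the preimage in one stroke, with invariance and continuity read off from the monotone self-supporting families $L^\pm_t$ and the two structural identities $L^-_t\cup L^+_t=K$ and $L^-_s\cap L^+_t=\emptyset$ for $s<t$. Your approach is more direct and exploits the order structure of $\mathrm{C}(K)$; the paper's approximation scheme is the standard C*-algebraic argument and is what transfers verbatim to the noncommutative Dauns--Hofmann theorem. One minor presentational point: your invariance computation $Sf(x)=\int f\,\mathrm{d}(S'\delta_x)$ treats $f$ as a bounded Borel function before continuity has been shown; since the continuity and invariance arguments are independent, simply swapping their order removes any ambiguity.
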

\begin{proof}
We first show that $\fix(\EuScript{S})$ is a sublattice of $\mathrm{C}(K)$. Take $f \in \fix(\EuScript{S})$ and $S \in \EuScript{S}$. Since $\mathrm{M}(\EuScript{S})= K$, it suffices to prove that $S_p|f||_{\supp p} = |f||_{\supp p}$ for each $p \in \mathrm{Prim}(\EuScript{S})$. However, this is true since the fixed space $\fix(\EuScript{S}_p)$ consists only of constant functions for every $p \in \mathrm{Prim}(\EuScript{S})$.\medskip\\
The mapping $\,\hat{\,}\,$ is clearly linear  and $\hat{\mathbbm{1}} = \mathbbm{1}$\color{black}. \color{black} Next we show that $\,\hat{\,}\,$ is an isometric Markov lattice homomorphism. \color{black} For $f \in \fix(\EuScript{S})$
\begin{align*}
\|\hat{f}\|= \sup_{p \in \mathrm{Prim}(\EuScript{S})} |\hat{f}(p)| = \sup_{\mu \in \mathrm{ex}\,\mathrm{P}_{\EuScript{S}}(K)} |\langle f, \mu \rangle| \leq \|f\|.
\end{align*}
The set $\{x \in K\mid |f(x)| = \|f\|\}$ is the support of an $\EuScript{S}$-ideal \color{black}(\color{black}see \color{black} Theorem 1.2 in \cite{Sine1968}) and thus contains a minimal support of an $\EuScript{S}$-ideal $M$ which in turn supports an ergodic measure $\mu$. This implies
\begin{align*}
|\hat{f}(I_M)| = |\langle f,\mu \rangle| = |f(x)| = \|f\|
\end{align*}
for each $x \in M$ and consequently $\|\hat{f}\| = \|f\|$.\\
Now take $f \in \fix(\EuScript{S})$ and $p = I_\mu \in \mathrm{Prim}(\EuScript{S})$. For each $x \in \supp(\mu)$
\begin{align*}
|\hat{f}(p)| = |f(x)| = |f|(x) = \widehat{|f|}(p).\color{black}
\end{align*}
It remains to show that  $\,\hat{\,}\,$ is surjective. Take $f \in \mathrm{C}(\mathrm{Prim}(\EuScript{S}))$ with $0 \leq f \leq \mathbbm{1}$. We fix $n \in \N$ and consider the open sets
\begin{align*}
U_{k,n} \defeq \left\{p \in \mathrm{Prim}(\EuScript{S})\mmid \frac{k-1}{n} < f(p) < \frac{k+1}{n}\right\}
\end{align*}
for $k \in \{0,...,n\}$. Then $U_{k,n}^c = \mathrm{hull}(I_{k,n}\color{black})$ for invariant ideals $I_{k,n} \subset \mathrm{C}(K)$ and $k \in \{0,...,n\}$. Assume
\begin{align*}
I\defeq \sum_{k=0}^n I_{k,n}\color{black} \neq \mathrm{C}(K).
\end{align*}
Then $I$ is contained in a maximal $\EuScript{S}$-ideal $p$. Since $I_{k,n}\color{black} \subset p$ for all $k \in \{0,...,n\}$,
\begin{align*}
p \in \bigcap_{k=0}^n \mathrm{hull}(I_{k,n}\color{black}) = \left(\bigcup_{k=0}^n U_{k,n}\right)^c = \emptyset,
\end{align*}
a contradiction.\\
We thus find $0 \leq f_{k,n}\color{black} \in I_{n,k}$ for $k\in \{0,...,n\}$ with $\mathbbm{1} = \sum_{k=0}^n f_{k,n}\color{black}$ ( see \color{black} II.5.1.4 in \cite{Blac2006}). Now set $g_n \defeq \sum_{k=1}^n\frac{k}{n}f_{k,n}\color{black}$.\\
Take $p \in \mathrm{Prim}(\EuScript{S})$. If $k \in \{0,...,n\}$ with $p \notin U_{k,n}$,  then $I_{k,n}\color{black} \subset p$ and therefore $f_{k,n}\color{black} \in p$, i.e., $f_{k,n}\color{black}|_{\supp p} = 0$. This implies 
\begin{align*}
|g_n(x) - f(p)| &= \left| \sum_{k\colon p \in U_{k,n}} \frac{k}{n}f_{k,n}\color{black}(x) - \sum_{k\colon p \in U_{k,n}} f_{k,n}\color{black}(x)f(p)\right|\\
&\leq \sum_{k\colon p \in U_{k,n}}\left|\frac{k}{n} - f(p)\right||f_{k,n}\color{black}(x)| \leq \frac{1}{n}
\end{align*}
for $x \in \mathrm{supp}\, p$. In particular we obtain
\begin{align*}
|g_n(x) - g_m(x)| \leq \frac{1}{n}+\frac{1}{m}
\end{align*}
for all $x \in M\defeq \bigcup_{p \in \mathrm{Prim}(\EuScript{S})}\supp\,p$ and all $n,m \in \N$. Since $\EuScript{S}$ is radical free, $M$ is dense whence $(g_n)_{n \in \N}$ is a Cauchy sequence in $\mathrm{C}(K)$. Denoting its limit by $g$ we obtain $g(x) = f(p)$ for $x \in \supp \, p$, $p \in \mathrm{Prim}(\EuScript{S})$ and, since $M$ is dense, $g \in \fix(\EuScript{S})$. Moreover, we clearly have $\hat{g} = f$. 
\end{proof}
The first part of the proof of Theorem \ref{contfunct1} is based on the proof of Theorem 5 in \cite{Scha1968}, while the second part uses arguments from the Dauns-Hofmann Theorem II.6.5.10 in \cite{Blac2006}.\\
We now focus on the general case, i.e., $\EuScript{S}$ not being radical free.
\begin{lemma}\label{lemmahomeo}
The mapping
\begin{align*}
\theta \colon \mathrm{Prim}(\EuScript{S}) \longrightarrow \mathrm{Prim}(\EuScript{S}_{\mathrm{rad}_{\EuScript{S}}(0)}),\quad p \mapsto p|_{\mathrm{M}(\EuScript{S})}
\end{align*}
is a homeomorphism.
\end{lemma}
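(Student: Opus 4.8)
The plan is to reduce everything to Proposition \ref{lemmaquotient} applied to the $\EuScript{S}$-ideal $I \defeq \mathrm{rad}_{\EuScript{S}}(0)$, whose support is $L \defeq \mathrm{M}(\EuScript{S})$ by definition of $\mathrm{M}(\EuScript{S})$. The point is that every primitive $\EuScript{S}$-ideal, and in fact every $J \in \mathrm{Rad}(\EuScript{S})$, contains $\mathrm{rad}_{\EuScript{S}}(0)$: indeed $\mathrm{rad}_{\EuScript{S}}(0) = \ker(\mathrm{Prim}(\EuScript{S})) \subseteq \ker(\mathrm{hull}(J)) = J$. Hence the restriction conditions ``$I \subseteq p$'' and ``$I \subseteq J$'' in Proposition \ref{lemmaquotient} hold automatically, and that proposition already tells us that $\theta$ is a bijection and that $J \mapsto J|_{\mathrm{M}(\EuScript{S})}$ is an inclusion-preserving bijection from $\mathrm{Rad}(\EuScript{S})$ onto $\mathrm{Rad}(\EuScript{S}_{\mathrm{rad}_{\EuScript{S}}(0)})$ with inclusion-preserving inverse. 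It then remains to see that $\theta$ and $\theta^{-1}$ are continuous.

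First I would record two compatibility identities, both immediate from the two ``observations'' in the proof of Proposition \ref{lemmaquotient} and its inclusion-preservation statement. For every $J \in \mathrm{Rad}(\EuScript{S})$ and every $p \in \mathrm{Prim}(\EuScript{S})$ one has $J \subseteq p$ iff $J|_{\mathrm{M}(\EuScript{S})} \subseteq p|_{\mathrm{M}(\EuScript{S})}$ (the forward direction is trivial, the converse is the second observation, applicable since $\mathrm{rad}_{\EuScript{S}}(0) \subseteq J, p$), whence
\begin{align*}
\theta(\mathrm{hull}(J)) = \mathrm{hull}\bigl(J|_{\mathrm{M}(\EuScript{S})}\bigr).
\end{align*}
Moreover, since restriction to $\mathrm{M}(\EuScript{S})$ commutes with intersections of families of $\EuScript{S}$-ideals all containing $\mathrm{rad}_{\EuScript{S}}(0)$, for every subset $A \subseteq \mathrm{Prim}(\EuScript{S})$ one has
\begin{align*}
\ker(\theta(A)) = \Bigl(\bigcap_{p \in A} p\Bigr)\Big|_{\mathrm{M}(\EuScript{S})} = \bigl(\ker A\bigr)\big|_{\mathrm{M}(\EuScript{S})}.
\end{align*}

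With these at hand, the conclusion follows by showing that $\theta$ and $\theta^{-1}$ both carry closed sets to closed sets; as $\theta$ is bijective, this makes $\theta$ a homeomorphism. By Proposition \ref{propertiestop} (i) a nonempty $A \subseteq \mathrm{Prim}(\EuScript{S})$ is closed iff $A = \mathrm{hull}(\ker A)$ with $\ker A \in \mathrm{Rad}(\EuScript{S})$; applying $\theta$ and the two identities gives $\theta(A) = \mathrm{hull}(\ker(\theta(A)))$, with $\ker(\theta(A)) = (\ker A)|_{\mathrm{M}(\EuScript{S})} \in \mathrm{Rad}(\EuScript{S}_{\mathrm{rad}_{\EuScript{S}}(0)})$, so $\theta(A)$ is closed (the empty set being trivial). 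Conversely, a nonempty closed $B \subseteq \mathrm{Prim}(\EuScript{S}_{\mathrm{rad}_{\EuScript{S}}(0)})$ equals $\mathrm{hull}(\ker B)$ with $\ker B \in \mathrm{Rad}(\EuScript{S}_{\mathrm{rad}_{\EuScript{S}}(0)})$; writing $\ker B = J|_{\mathrm{M}(\EuScript{S})}$ for the unique $J \in \mathrm{Rad}(\EuScript{S})$ furnished by Proposition \ref{lemmaquotient}, the first identity yields $\theta^{-1}(B) = \mathrm{hull}(J)$, which is closed.

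The argument is essentially bookkeeping, and the one point I would treat as the main obstacle is checking that the hypotheses of Proposition \ref{lemmaquotient} genuinely apply here, i.e., that \emph{every} primitive and \emph{every} radical $\EuScript{S}$-ideal contains $\mathrm{rad}_{\EuScript{S}}(0)$, so that the inclusion- and intersection-compatibility statements from its proof are available globally on $\mathrm{Prim}(\EuScript{S})$ rather than only on a proper subset. As an alternative to this hull--kernel bookkeeping, one can argue with nets: $\theta$ preserves supports, and since $\supp p \subseteq \mathrm{M}(\EuScript{S})$ for every $p \in \mathrm{Prim}(\EuScript{S})$ while the open subsets of $\mathrm{M}(\EuScript{S})$ are exactly the traces on $\mathrm{M}(\EuScript{S})$ of open subsets of $K$, Corollary \ref{easyconv} shows directly that $p_\alpha \to p$ in $\mathrm{Prim}(\EuScript{S})$ if and only if $\theta(p_\alpha) \to \theta(p)$ in $\mathrm{Prim}(\EuScript{S}_{\mathrm{rad}_{\EuScript{S}}(0)})$, which again gives that $\theta$ is a homeomorphism.
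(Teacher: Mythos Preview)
Your proof is correct and follows essentially the same route as the paper: both invoke Proposition~\ref{lemmaquotient} with $I=\mathrm{rad}_{\EuScript{S}}(0)$ to obtain bijectivity of $\theta$ and of $J\mapsto J|_{\mathrm{M}(\EuScript{S})}$ on radical ideals, and then use Proposition~\ref{propertiestop}~(i) to translate the identity $\theta(\mathrm{hull}(J))=\mathrm{hull}(J|_{\mathrm{M}(\EuScript{S})})$ into the statement that $\theta$ and $\theta^{-1}$ send closed sets to closed sets. Your additional net-based alternative via Corollary~\ref{easyconv} is not in the paper but is a valid shortcut.
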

\begin{proof}
Note first that $\theta$ is well-defined and bijective by Lemma \ref{lemmaquotient} since $\mathrm{rad}_{\EuScript{S}}(0) \subset p$ for each $p \in \mathrm{Prim}(\EuScript{S})$. Lemma \ref{lemmaquotient} also implies that 
\begin{align*}
\mathrm{Rad}(\EuScript{S}) \longrightarrow \mathrm{Rad}(\EuScript{S}_{\mathrm{rad}_{\EuScript{S}}(0)}),\quad I \mapsto I|_{\mathrm{M}(\EuScript{S})}
\end{align*}
is bijective. By Proposition \ref{propertiestop}  (i) \color{black} we obtain that $A \subset \mathrm{Prim}(\EuScript{S})$ is closed if and only if $A= \mathrm{hull}(I)$ for some $I \in \mathrm{Rad}(\EuScript{S})$. If $A$ is closed, we therefore obtain \color{black}
\begin{align*}
\theta(A) &= \theta(\{p \in \mathrm{Prim}(\EuScript{S})\mid I \subset p\})\\
&= \{q \in \mathrm{Prim}(\EuScript{S}_{\mathrm{rad}_{\EuScript{S}}(0)})\mid I \subset \theta^{-1}(q)\}\\
&= \{q \in \mathrm{Prim}(\EuScript{S}_{\mathrm{rad}_{\EuScript{S}}(0)})\mid I|_{\mathrm{M}(\EuScript{S})} \subset q\}\\
&= \mathrm{hull}(I|_{\mathrm{M}(\EuScript{S})}),
\end{align*}
 and therefore $\theta(A)$ is closed. Conversely, if $\theta(A)$ is closed, then by Lemma \ref{lemmaquotient} and Proposition \ref{propertiestop} (i) there is $I \in \mathrm{Rad}(\EuScript{S})$ with $\theta(A) = \mathrm{hull}(I|_{\mathrm{M}(\EuScript{S})})$ and, by the above, we obtain $A= \mathrm{hull}(I)$. 
\end{proof}
By combining Lemma \ref{lemmahomeo} with Theorem \ref{contfunct1} we obtain  the main result of this section.\color{black}
\begin{theorem}\label{contfunct}
The fixed space $\fix(\EuScript{S}_{\mathrm{rad}_{\EuScript{S}}(0)})$ is a Banach sublattice of $\mathrm{C}(\mathrm{M}(\EuScript{S}))$ and the mapping
\begin{align*}
\hat{\,}\colon\fix(\EuScript{S}_{\mathrm{rad}_{\EuScript{S}}(0)}) \longrightarrow \mathrm{C}(\mathrm{Prim}(\EuScript{S})), \quad f \mapsto \hat{f}
\end{align*}
is an isometric Markov \color{black} lattice isomorphism. 
\end{theorem}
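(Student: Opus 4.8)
The plan is to reduce everything to the radical free situation already settled in Theorem \ref{contfunct1}, applied to the induced semigroup $\EuScript{S}_{\mathrm{rad}_{\EuScript{S}}(0)}$ on $\mathrm{C}(\mathrm{M}(\EuScript{S}))$. The first step is to observe that this induced semigroup is itself radical free: applying the final assertion of Proposition \ref{lemmaquotient} with $I = \mathrm{rad}_{\EuScript{S}}(0)$ and $L = \mathrm{M}(\EuScript{S}) = \supp \mathrm{rad}_{\EuScript{S}}(0)$ gives
\begin{align*}
\mathrm{rad}_{\EuScript{S}_{\mathrm{rad}_{\EuScript{S}}(0)}}(0) = \mathrm{rad}_{\EuScript{S}}\!\left(\mathrm{rad}_{\EuScript{S}}(0)\right)\big|_{\mathrm{M}(\EuScript{S})} = \mathrm{rad}_{\EuScript{S}}(0)\big|_{\mathrm{M}(\EuScript{S})},
\end{align*}
and the right-hand side is the zero ideal of $\mathrm{C}(\mathrm{M}(\EuScript{S}))$ precisely because $\mathrm{M}(\EuScript{S})$ is the support of $\mathrm{rad}_{\EuScript{S}}(0)$.

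Now Theorem \ref{contfunct1} applies verbatim to $\EuScript{S}_{\mathrm{rad}_{\EuScript{S}}(0)}$ on $\mathrm{C}(\mathrm{M}(\EuScript{S}))$: it yields both the first claim, namely that $\fix(\EuScript{S}_{\mathrm{rad}_{\EuScript{S}}(0)})$ is a Banach sublattice of $\mathrm{C}(\mathrm{M}(\EuScript{S}))$, and that $f \mapsto \hat f$ is an isometric Markov lattice isomorphism of $\fix(\EuScript{S}_{\mathrm{rad}_{\EuScript{S}}(0)})$ onto $\mathrm{C}(\mathrm{Prim}(\EuScript{S}_{\mathrm{rad}_{\EuScript{S}}(0)}))$, where here $\hat f(I_\nu) = \int_{\mathrm{M}(\EuScript{S})} f\,\mathrm{d}\nu$ for $\nu \in \mathrm{ex}\,\mathrm{P}_{\EuScript{S}_{\mathrm{rad}_{\EuScript{S}}(0)}}(\mathrm{M}(\EuScript{S}))$. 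Next I would transport this isomorphism along the homeomorphism $\theta\colon \mathrm{Prim}(\EuScript{S}) \to \mathrm{Prim}(\EuScript{S}_{\mathrm{rad}_{\EuScript{S}}(0)})$ of Lemma \ref{lemmahomeo}: composition with $\theta$ is an isometric lattice isomorphism $\mathrm{C}(\mathrm{Prim}(\EuScript{S}_{\mathrm{rad}_{\EuScript{S}}(0)})) \to \mathrm{C}(\mathrm{Prim}(\EuScript{S}))$ preserving $\mathbbm 1$, and composing it with the hat map above produces an isometric Markov lattice isomorphism $\fix(\EuScript{S}_{\mathrm{rad}_{\EuScript{S}}(0)}) \to \mathrm{C}(\mathrm{Prim}(\EuScript{S}))$.

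The only point requiring a line of verification is that this composed map coincides with the map $f \mapsto \hat f$ as defined immediately before the theorem, i.e., with $\hat f(I_\mu) = \int_{\mathrm{M}(\EuScript{S})} f\,\mathrm{d}\mu$ for $I_\mu \in \mathrm{Prim}(\EuScript{S})$. For this one uses that every $p = I_\mu \in \mathrm{Prim}(\EuScript{S})$ satisfies $\mathrm{rad}_{\EuScript{S}}(0) \subset p$, hence $\supp\mu \subset \mathrm{M}(\EuScript{S})$, so by Proposition \ref{subsystem} the measure $\mu$ is the image under $i$ of a unique $\nu \in \mathrm{ex}\,\mathrm{P}_{\EuScript{S}_{\mathrm{rad}_{\EuScript{S}}(0)}}(\mathrm{M}(\EuScript{S}))$ with $I_\nu = \theta(p)$, and then $\int_{\mathrm{M}(\EuScript{S})} f\,\mathrm{d}\nu = \int_{\mathrm{M}(\EuScript{S})} f\,\mathrm{d}\mu$ (this is exactly Equation (\ref{(1)}) in the proof of Proposition \ref{lemmaquotient}). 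Thus the diagram commutes and the statement follows. Since every individual step is immediate from results already established, the only mild obstacle is this bookkeeping, i.e., checking that the two hat constructions are genuinely the same object under the identification $\theta$; no new estimate or construction is needed.
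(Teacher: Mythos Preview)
Your argument is correct and is precisely the approach the paper takes: the paper's proof consists of the single sentence ``By combining Lemma \ref{lemmahomeo} with Theorem \ref{contfunct1} we obtain the main result of this section,'' and you have simply spelled out what this combination entails, including the observation (via Proposition \ref{lemmaquotient}) that $\EuScript{S}_{\mathrm{rad}_{\EuScript{S}}(0)}$ is radical free and the bookkeeping that the two hat maps agree under $\theta$.
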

\section{Mean Ergodic Semigroups of Markov Operators}
\label{sec:7}
Using \color{black} the space $\mathrm{C}(\mathrm{Prim}(\EuScript{S}))$ we can now analyze mean ergodicity of Markov semigroups \color{black} and extend \color{black} Theorem 2 of \cite{Scha1967}. \color{black} Recall that $\overline{\mathrm{co}}\,\EuScript{S}$ denotes the closed convex hull of $\EuScript{S}$ with respect to the strong operator topology. The \color{black} right amenable semigroup $\EuScript{S}$ is \emph{mean ergodic} if there is $P \in \overline{\mathrm{co}}\,\EuScript{S}$ with $PS=SP=P$ for each $S \in \EuScript{S}$ (see \cite{Nage1973} or \cite{Schr2013a} for this concept). In this case $P$ is unique with these properties and a projection onto the fixed space $\fix(\EuScript{S})$ of $\EuScript{S}$, called the \emph{mean ergodic projection}.\color{black}
\begin{theorem}\label{meanergodiccor2}
The following assertions are equivalent.
\begin{enumerate}[(a)]
\item $\EuScript{S}$ is mean ergodic.
\item The following two conditions are satisfied.
\begin{enumerate}[(i)]
\item The mapping
\begin{align*}
\mathrm{ex}\, \mathrm{P}_{\EuScript{S}}(K) \longrightarrow \mathrm{Prim}(\EuScript{S}), \quad \mu \mapsto I_\mu
\end{align*}
is a homeomorphism.
\item For each $f \in \fix(\EuScript{S}_{\mathrm{rad}_{\EuScript{S}}(0)})$ there is $F \in \fix(\EuScript{S})$ with $f = F|_{\mathrm{M}(\EuScript{S})}$.
\end{enumerate}
\item The following three conditions are satisfied.
\begin{enumerate}[(i)]
\item The primitive spectrum $\mathrm{Prim}(\EuScript{S})$ is a Hausdorff space.
\item For each $\mu \in \mathrm{ex}\,\mathrm{P}_{\EuScript{S}}(K)$ \color{black} the support $\supp \mu$ is uniquely ergodic, i.e., $\mu$ is the only invariant measure having its support in $\supp \mu$.
\item For each $f \in \fix(\EuScript{S}_{\mathrm{rad}_{\EuScript{S}}(0)})$ there is  $F \in \fix(\EuScript{S})$ with $f = F|_{\mathrm{M}(\EuScript{S})}$.
\end{enumerate}
\end{enumerate}
\begin{remark}\label{meanprim2}
 Note that assertion (c) (i) of \cref{meanergodiccor2} implies that each primitive ideal is maximal (see Proposition \ref{propertiestop} (iv)). This shows that the concept of maximal $\EuScript{S}$-ideals is sufficient for mean ergodic semigroups. In particular, combining \cref{meanergodiccor2} with Proposition \ref{radid1} (i) implies Theorem 2 of \cite{Scha1967}.\color{black}
\end{remark}
\end{theorem}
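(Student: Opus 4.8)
The plan is to prove the cycle $(a)\Rightarrow(b)\Rightarrow(c)\Rightarrow(a)$; the implication $(b)\Rightarrow(c)$ is essentially formal, while $(c)\Rightarrow(a)$ carries the weight. Write $N\defeq\mathrm{rad}_{\EuScript{S}}(0)$, so $\mathrm{M}(\EuScript{S})=\supp N$ and $\mathrm{C}(K)/N\cong\mathrm{C}(\mathrm{M}(\EuScript{S}))$ with quotient semigroup $\EuScript{S}_N\defeq\EuScript{S}_{\mathrm{rad}_{\EuScript{S}}(0)}$. I first record two facts. (1) $\fix(\EuScript{S})\cap N=\{0\}$: if $0\neq F\in\fix(\EuScript{S})$, then by Theorem 1.2 of \cite{Sine1968} the set $\{|F|=\|F\|\}$ is the support of an $\EuScript{S}$-ideal, hence contains a minimal one, hence the support $M\subseteq\mathrm{M}(\EuScript{S})$ of an ergodic measure $\mu$, so $\langle|F|,\mu\rangle=\|F\|>0$; the same argument shows that $F\mapsto F|_{\mathrm{M}(\EuScript{S})}$ embeds $\fix(\EuScript{S})$ \emph{isometrically} into $\fix(\EuScript{S}_N)$. (2) Since $M(K)$ is an AL-space, $\fix(\EuScript{S}')$ is a sublattice of $M(K)$ (if $S'\mu=\mu$ then $|\mu|\le S'|\mu|$ with $\|S'|\mu|\|\le\||\mu|\|$, forcing $S'|\mu|=|\mu|$), hence equals the complex linear span of $\mathrm{P}_{\EuScript{S}}(K)$, and clearly $\fix(\EuScript{S}')=\{g-Sg\mid g\in\mathrm{C}(K),\,S\in\EuScript{S}\}^{\perp}$. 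I will use freely that a right ergodic net $(T_\alpha)$ for $\EuScript{S}$ exists with $\|T_\alpha\|\le1$, that $\overline{\mathrm{co}}\,\EuScript{S}$ is strongly closed, Remark \ref{capinv}, and the isometric Markov lattice isomorphism $\Phi\colon\fix(\EuScript{S}_N)\to\mathrm{C}(\mathrm{Prim}(\EuScript{S}))$ of Theorem \ref{contfunct}.

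For $(a)\Rightarrow(b)$, let $P\in\overline{\mathrm{co}}\,\EuScript{S}$ be the mean ergodic projection onto $\fix(\EuScript{S})$. For $f\in N$ one has $\langle Pf,\mu\rangle=\langle f,P'\mu\rangle=\langle f,\mu\rangle=0$ for all $\mu\in\mathrm{P}_{\EuScript{S}}(K)$, so $\Phi\bigl((Pf)|_{\mathrm{M}(\EuScript{S})}\bigr)=0$, hence $(Pf)|_{\mathrm{M}(\EuScript{S})}=0$, and with fact (1) $Pf=0$. Thus $P$ descends to the mean ergodic projection $\bar P\in\overline{\mathrm{co}}\,\EuScript{S}_N$, whose range $\fix(\EuScript{S}_N)$ therefore equals $\{F|_{\mathrm{M}(\EuScript{S})}\mid F\in\fix(\EuScript{S})\}$ — this is (b)(ii). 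Next, $\pi$ is continuous and surjective by Proposition \ref{propertiestop} (vi), and injective: for $L=\supp\mu$ ($\mu$ ergodic), $I_L$ is $P$-invariant, so $P$ induces the mean ergodic projection of $\EuScript{S}_L$ on $\mathrm{C}(L)$; since the measure $\mu_L$ induced by $\mu$ on $L$ is ergodic with full support, $\fix(\EuScript{S}_L)$ embeds into the one-dimensional space $\fix\bigl((\EuScript{S}_L)_{\mu_L}\bigr)$ and hence equals $\C\mathbbm{1}$, whence the mean ergodic projection of $\EuScript{S}_L$ has one-dimensional range and $L$ is uniquely ergodic — so $\pi$ separates supports of ergodic measures. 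Finally $\pi^{-1}$ is continuous because $I_\mu\mapsto\langle f,\mu\rangle=\langle Pf,\mu\rangle=\Phi\bigl((Pf)|_{\mathrm{M}(\EuScript{S})}\bigr)(I_\mu)$ lies in $\mathrm{C}(\mathrm{Prim}(\EuScript{S}))$ for each $f$. Hence $\pi$ is a homeomorphism, giving (b)(i).

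For $(b)\Rightarrow(c)$: (c)(iii) is (b)(ii); $\mathrm{Prim}(\EuScript{S})\cong\mathrm{ex}\,\mathrm{P}_{\EuScript{S}}(K)$ is Hausdorff since the latter is a subspace of a weak* topology, giving (c)(i); and if $\mu,\nu\in\mathrm{ex}\,\mathrm{P}_{\EuScript{S}}(K)$ with $\supp\nu\subseteq\supp\mu$ then $I_\mu\subseteq I_\nu$, so $I_\nu\in\overline{\{I_\mu\}}$, and transporting through the homeomorphism $\pi$ (where points are closed) gives $\nu=\mu$, i.e.\ (c)(ii). For $(c)\Rightarrow(a)$ I first upgrade $\pi$ — a continuous bijection by (c)(ii) onto the quasi-compact Hausdorff space $\mathrm{Prim}(\EuScript{S})$ — to a homeomorphism. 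Suppose $\mu_\beta\to\nu$ weak* with $I_{\mu_\beta}\to I_\mu$ in $\mathrm{Prim}(\EuScript{S})$ ($\mu$ ergodic); after passing to a subnet, set $L^*\defeq\limsup_\beta\supp\mu_\beta$, which is closed and (as the $\supp\mu_\beta$ are self-supporting and the $S'$ weak* continuous) self-supporting. Corollary \ref{easyconv} gives $\supp\mu\subseteq L^*$, and lower semicontinuity of $\tau\mapsto\tau(U)$ for open $U$ gives $\supp\nu\subseteq L^*$. Every primitive ideal $p'$ with $\supp p'\subseteq L^*$ is a cluster point of $(I_{\mu_\beta})$, hence $=I_\mu$ by Hausdorffness; since Hausdorffness also makes primitive ideals maximal, $\supp\mu$ is the only minimal self-supporting set inside $L^*$. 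Thus the closed face $F\defeq\{\tau\in\mathrm{P}_{\EuScript{S}}(K)\mid\supp\tau\subseteq L^*\}$ has $\mu$ as its unique extreme point (an extreme point is ergodic, supported on a minimal self-supporting set inside $L^*$, hence on $\supp\mu$, hence $=\mu$ by (c)(ii)), so $F=\{\mu\}$ by Krein--Milman and $\nu=\mu$. Therefore $\pi^{-1}$ is continuous and $\mathrm{ex}\,\mathrm{P}_{\EuScript{S}}(K)$ is compact.

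It remains to produce the projection. By (c)(iii), restriction $R\colon\fix(\EuScript{S})\to\fix(\EuScript{S}_N)$ is an isomorphism, so $\Psi\defeq\Phi\circ R\colon\fix(\EuScript{S})\to\mathrm{C}(\mathrm{Prim}(\EuScript{S}))$ is an isometric Markov lattice isomorphism. Set $Pf\defeq\Psi^{-1}(g_f)$, where $g_f(p)\defeq\langle f,\mu_p\rangle$ and $\mu_p$ is the unique ergodic measure with $I_{\mu_p}=p$ (well defined by (c)(ii), continuous since $\pi^{-1}$ is); a direct check shows $P$ is a Markov projection of $\mathrm{C}(K)$ onto $\fix(\EuScript{S})$ with $PS=SP=P$ (using $S'\mu=\mu$ for $PS=P$, and $Pf\in\fix(\EuScript{S})$ for $SP=P$). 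The crucial point is $P\in\overline{\mathrm{co}}\,\EuScript{S}$: for a right ergodic net $(T_\alpha)$ and $f\in\mathrm{C}(K)$, put $h\defeq f-Pf$; then $\langle h,\mu\rangle=g_h(I_\mu)=0$ for every ergodic (hence by Remark \ref{capinv} every invariant) probability measure, so by fact (2) $h\in{}^{\perp}\fix(\EuScript{S}')=\overline{\mathrm{lin}}\{g-Sg\}$ (bipolar theorem), whence $T_\alpha h\to0$ (as $T_\alpha(\mathrm{Id}-S)\to0$ strongly and $\|T_\alpha\|\le1$). Hence $T_\alpha f=T_\alpha h+Pf\to Pf$, i.e.\ $T_\alpha\to P$ strongly, so $P\in\overline{\mathrm{co}}\,\EuScript{S}$ and $\EuScript{S}$ is mean ergodic (the concluding remark then follows from Proposition \ref{propertiestop} (iv) together with Proposition \ref{radid1} (i)). The main obstacle is precisely this $(c)\Rightarrow(a)$ step: extracting a homeomorphism from the soft topological hypotheses — the $\limsup$-of-supports/unique-extreme-point argument — and then recognizing that the projection built from the Dauns--Hofmann picture actually lies in $\overline{\mathrm{co}}\,\EuScript{S}$, which the $\fix(\EuScript{S}')$-sublattice and bipolar argument secures.
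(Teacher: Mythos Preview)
Your argument is correct (modulo two small points below) but follows a genuinely different route from the paper.

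The paper proves the cycle $(a)\Rightarrow(c)\Rightarrow(b)\Rightarrow(a)$, whereas you run $(a)\Rightarrow(b)\Rightarrow(c)\Rightarrow(a)$. More substantively:
\begin{itemize}
\item For the hardest implication, the paper's $(b)\Rightarrow(a)$ uses Choquet representation of $\mathrm{P}_{\EuScript{S}}(K)$ over the (now compact) set $\mathrm{ex}\,\mathrm{P}_{\EuScript{S}}(K)$ together with the external criterion ``$\fix(\EuScript{S})$ separates $\fix(\EuScript{S}')$ $\Rightarrow$ mean ergodic'' (Theorem 1.7 of \cite{Schr2013a}). Your $(c)\Rightarrow(a)$ instead builds the projection $P$ explicitly from the Dauns--Hofmann isomorphism $\Phi$ and then shows $T_\alpha\to P$ strongly via the bipolar identity ${}^{\perp}\fix(\EuScript{S}')=\overline{\mathrm{lin}}\{g-Sg\}$. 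This is more self-contained and avoids Choquet theory, at the price of length.
\item For compactness of $\mathrm{ex}\,\mathrm{P}_{\EuScript{S}}(K)$, the paper's argument (in $(c)\Rightarrow(b)$) is shorter: if $\mu_\alpha\to\mu$ with $\mu_\alpha$ ergodic, then $I_\mu$ is radical by Proposition \ref{radid1}, and one checks directly that $I_{\mu_\alpha}\to p$ for \emph{every} primitive $p\supseteq I_\mu$; Hausdorffness then forces a unique such $p$, so $I_\mu=p$ is itself primitive, and (c)(ii) gives $\mu$ ergodic. Your $\limsup$-of-supports and unique-extreme-point argument reaches the same conclusion but is considerably heavier.
\item In $(a)\Rightarrow(b)$ you make elegant use of Theorem \ref{contfunct} to get continuity of $\pi^{-1}$; the paper does not need this since it goes $(a)\Rightarrow(c)$ directly and obtains the homeomorphism afterwards from compact/Hausdorff.
\end{itemize}

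Two minor issues. In $(b)\Rightarrow(c)$(ii) you only show that no other \emph{ergodic} measure lives on $\supp\mu$, whereas (c)(ii) asserts this for all invariant measures; the missing step is precisely the closed-face/Krein--Milman argument you yourself deploy later in $(c)\Rightarrow(a)$. And the appeal to Remark \ref{capinv} for ``$\langle h,\mu\rangle=0$ for all ergodic $\Rightarrow$ for all invariant $\mu$'' is not quite the right citation---that remark concerns absolute kernels $\bigcap I_\mu$, not linear functionals; what you actually need is that $\mathrm{P}_{\EuScript{S}}(K)$ is the weak*-closed convex hull of its extreme points.
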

\begin{proof}[ of \cref{meanergodiccor2}]
\enquote{(a) $\Rightarrow$ (c)}: \color{black}Assume that $\EuScript{S}$ is mean ergodic with mean ergodic projection $P \in \mathscr{L}(\mathrm{C}(K))$. We first show that $\mathrm{Prim}(\EuScript{S})$ is Hausdorff.\\
Consider $I_{\mu_1}$, $I_{\mu_2} \in \mathrm{Prim}(\EuScript{S})$ with $\mu_1 \neq \mu_2$. Since $\EuScript{S}$ is mean ergodic, we find $f \in \fix(\EuScript{S})$ with
\begin{align*}
c_1 \coloneqq \langle f,\mu_1\rangle < \langle f, \mu_2\rangle \eqqcolon c_2
\end{align*}
 by Theorem 1.7 of \cite{Schr2013a}. \color{black}
Choose $c \in (c_1,c_2)$ and set $U_1 \defeq f^{-1}((-\infty,c))$ and $U_2 \defeq f^{-1}((c,\infty))$. The sets
\begin{align*}
V_i \defeq \{p \in \mathrm{Prim}(\EuScript{S})\mid \supp p \cap U_i \neq \emptyset\} \subset \mathrm{Prim}(\EuScript{S})
\end{align*}
are open by Proposition \ref{propertiestop} (ii) and $I_{\mu_i} \in V_i$ for $i=1,2$.\\
Assume there is $p \in V_1\cap V_2$. Then there are $x_i \in U_i \cap \supp p$ for $i=1,2$ and thus $f(x_1)< c < f(x_2)$. Since $f$ is constant on supports of ergodic measures, this is a contradiction. \\
Given $\mu \in \mathrm{ex} \, \mathrm{P}_{\EuScript{S}}(K)$ we know that $\EuScript{S}_{I_\mu} \subset \mathscr{L}(\mathrm{C}(\supp \mu))$ is also mean ergodic. Since $\fix(\EuScript{S}_{I_\mu})$ is one-dimensional, we obtain that $\fix(\EuScript{S}_{I_\mu}')$ is one dimensional, too. Thus, the supports of ergodic measures are uniquely ergodic.\\
Next, take $f \in \fix(\EuScript{S}_{\mathrm{rad}_{\EuScript{S}}(0)})$ and let $G$ be any continuous extension of $f$ to $K$. Then $F \defeq PG \in \fix(\EuScript{S})$ with $F|_{\mathrm{M}(\EuScript{S})} = PG|_{\mathrm{M}(\EuScript{S})} = f$. Thus (a) implies (c).\smallskip\\
\enquote{(c) $\Rightarrow$ (b)}: \color{black} Now suppose that (i), (ii) and (iii) of (c) are valid. We first show that  $\mathrm{ex}\,\mathrm{P}_{\EuScript{S}}(K)$ is compact. \color{black} Take a net $(\mu_{\alpha})_{\alpha \in A}$ in $\mathrm{ex}\,\mathrm{P}_{\EuScript{S}}(K)$ with $\lim_{\alpha} \mu_{\alpha} = \mu \in \mathrm{P}_{\EuScript{S}}(K)$. Since $I_\mu$ is a radical ideal  by Proposition \ref{radid1} (i) \color{black} we obtain
\begin{align*}
I_\mu = \bigcap_{\substack{p \in \mathrm{Prim}(\EuScript{S})\\I_\mu \subset p}} p.
\end{align*}
Now take $p \in \mathrm{Prim}(\EuScript{S})$ with $p \supset I_\mu$ and any $f \in \mathrm{C}(K)$ with $p \in U_f$. Then $f \notin p$ and consequently $f \notin I_\mu$. This implies $\langle |f|,\mu\rangle \neq 0$ and thus there is $\alpha_0 \in A$ with $\langle |f|,\mu_{\alpha}\rangle \neq 0$ for all $\alpha \geq \alpha_0$.  But then $I_{\mu_\alpha} \rightarrow p$ and, since $\mathrm{Prim}(\EuScript{S})$ is Hausdorff, this implies that there is only one such $p$, hence  $I_\mu$ is primitive. \color{black} Applying (ii)  shows \color{black} that $\mu$ is ergodic.\\
 We now obtain that 
\begin{align*}
\pi \colon \mathrm{ex}\, \mathrm{P}_{\EuScript{S}}(K) \longrightarrow \mathrm{Prim}(\EuScript{S}), \quad \mu \mapsto I_\mu
\end{align*}
is a homeomorphism since the mapping is injective by (ii), $\mathrm{ex}\, \mathrm{P}_{\EuScript{S}}(K)$ is compact and $\mathrm{Prim}(\EuScript{S})$ is Hausdorff by (i).\color{black}
\smallskip\\
\enquote{(b) $\Rightarrow$ (a)}: \color{black} Finally assume that (b) is valid. The mapping 
\begin{align*}
\Phi_1 \colon \mathrm{C}(\mathrm{Prim}(\EuScript{S})) \longrightarrow \mathrm{C}(\mathrm{ex}\,\mathrm{P}_{\EuScript{S}}(K))\color{black}, \quad f \mapsto f\circ \pi
\end{align*}
is then an isometric  Markov lattice isomorphism \color{black} and by Theorem \ref{contfunct} the mapping 
\begin{align*}
\hat{ } \, \colon \fix(\EuScript{S}_{\mathrm{rad}_{\EuScript{S}}(0)}) \longrightarrow \mathrm{C}(\mathrm{Prim}(\EuScript{S}))
\end{align*}
is \color{black} so, too\color{black}. Now consider the map
\begin{align*}
\Phi_2 \colon \fix(\EuScript{S}) \longrightarrow \fix(\EuScript{S}_{\mathrm{rad}_{\EuScript{S}}(0)}), \quad f \mapsto f|_{\mathrm{M}(\EuScript{S})}.
\end{align*}
This is an isometric Banach space embedding (isometry follows with the same arguments as in the proof of Theorem \ref{contfunct}) and by (ii) it is surjective. Thus
\begin{align*}
\Phi \defeq \Phi_1 \circ \, \hat{ }\, \circ \Phi_2 \colon \fix(\EuScript{S}) \longrightarrow \mathrm{C}(\mathrm{ex}\, \mathrm{P}_{\EuScript{S}}(K)\color{black}),\quad f \mapsto \langle f,\,\cdot\,\rangle
\end{align*}
is an isometric isomorphism of Banach spaces.\\
Now take $\mu_1,\mu_2 \in \mathrm{P}_{\EuScript{S}}(K)$ with $\mu_1\neq \mu_2$. The space $\mathrm{ex}\, \mathrm{P}_{\EuScript{S}}(K)$ \color{black} is compact since it is homeomorphic to $\mathrm{Prim}(\EuScript{S})$. By Choquet theory (see \color{black} Proposition 1.2 in \cite{Phel2001}) we thus find measures $\tilde{\mu}_1,\tilde{\mu}_2 \in \mathrm{C}(\mathrm{ex}\, \mathrm{P}_{\EuScript{S}}(K)\color{black})'$ with $\tilde{\mu}_1 \neq \tilde{\mu}_2$ such that
\begin{align*}
\langle f,\mu_i \rangle = \int_{\mathrm{ex}\,\mathrm{P}_{\EuScript{S}}(K)} \langle f,\nu \rangle \, \mathrm{d}\tilde{\mu}_i(\nu)
\end{align*}
for each $f \in \mathrm{C}(K)$ and $i=1,2$. 
We then obtain
\begin{align*}
\langle f,\mu_i \rangle = \int_{\mathrm{ex}\,\mathrm{P}_{\EuScript{S}}(K)}\Phi(f)(\nu) \, \mathrm{d}\tilde{\mu}_i(\nu) = \langle \Phi(f),\tilde{\mu}_i\rangle
\end{align*}
for each $f \in \fix(\EuScript{S})$ and $i=1,2$. Since $\mathrm{C}(\mathrm{ex}\, \mathrm{P}_{\EuScript{S}}(K))$ separates $\mathrm{C}(\mathrm{ex}\, \mathrm{P}_{\EuScript{S}}(K))'$, this proves that $\fix(\EuScript{S})$ separates $\mathrm{P}_{\EuScript{S}}(K)$. Now $\EuScript{S}$ consists of Markov operators and therefore $\fix(\EuScript{S})$ separates $\fix(\EuScript{S}')$. Thus $\EuScript{S}$ is mean ergodic by Theorem 1.7 of \cite{Schr2013a}.\color{black}
 
\end{proof}
\begin{corollary}\label{meanergodiccor}
If $\EuScript{S}$ is radical free, then $\EuScript{S}$ is mean ergodic if and only if
\begin{align*}
\mathrm{ex}\, \mathrm{P}_{\EuScript{S}}(K) \longrightarrow \mathrm{Prim}(\EuScript{S}), \quad \mu \mapsto I_\mu
\end{align*}
is a homeomorphism. 
\end{corollary}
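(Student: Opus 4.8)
The plan is to deduce this directly from Theorem \ref{meanergodiccor2}. The crucial point is to understand what radical freeness does to the data appearing there: since $\EuScript{S}$ being radical free means $\mathrm{rad}_{\EuScript{S}}(0) = 0$, we have $\mathrm{M}(\EuScript{S}) = \supp \mathrm{rad}_{\EuScript{S}}(0) = \supp \{0\} = K$, because the self-supporting set attached to the zero ideal is all of $K$ (indeed $I_K = \{f \in \mathrm{C}(K)\mid f|_K = 0\} = \{0\}$). First I would record that, as a consequence, the semigroup $\EuScript{S}_{\mathrm{rad}_{\EuScript{S}}(0)}$ of Markov operators on $\mathrm{C}(\mathrm{M}(\EuScript{S})) = \mathrm{C}(K)$ induced by $\EuScript{S}$ is simply $\EuScript{S}$ itself, since the restriction map $\mathrm{C}(K) \to \mathrm{C}(K)$ used to define it is the identity. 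In particular $\fix(\EuScript{S}_{\mathrm{rad}_{\EuScript{S}}(0)}) = \fix(\EuScript{S})$.

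Next I would observe that, with this identification, condition (b)(ii) of Theorem \ref{meanergodiccor2} --- every $f \in \fix(\EuScript{S}_{\mathrm{rad}_{\EuScript{S}}(0)})$ extends to some $F \in \fix(\EuScript{S})$ with $f = F|_{\mathrm{M}(\EuScript{S})}$ --- holds automatically: one takes $F = f$, which already lies in $\fix(\EuScript{S})$ and restricts to $f$ on $\mathrm{M}(\EuScript{S}) = K$. Thus statement (b) of Theorem \ref{meanergodiccor2} reduces in the radical free case to its first item alone, namely that $\mathrm{ex}\,\mathrm{P}_{\EuScript{S}}(K) \to \mathrm{Prim}(\EuScript{S})$, $\mu \mapsto I_\mu$, is a homeomorphism.

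Finally, invoking the equivalence (a) $\Leftrightarrow$ (b) of Theorem \ref{meanergodiccor2} gives exactly the asserted statement. There is no real obstacle here; the only thing demanding a moment's attention is the bookkeeping of the previous two paragraphs, i.e., checking carefully that radical freeness collapses $\mathrm{M}(\EuScript{S})$ to $K$ and hence makes $\EuScript{S}_{\mathrm{rad}_{\EuScript{S}}(0)}$, together with its fixed space, coincide with $\EuScript{S}$, so that the extension condition becomes vacuous.
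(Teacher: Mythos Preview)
Your proof is correct and is exactly the argument the paper has in mind: the corollary is stated without proof immediately after Theorem \ref{meanergodiccor2}, and the intended deduction is precisely your observation that radical freeness forces $\mathrm{M}(\EuScript{S}) = K$, so that $\EuScript{S}_{\mathrm{rad}_{\EuScript{S}}(0)} = \EuScript{S}$ and condition (b)(ii) becomes vacuous, leaving only (b)(i).
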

The next corollary follows from Proposition \ref{subsystem} and Lemma \ref{lemmahomeo}.
\begin{corollary}\label{cor2}
The semigroup $\EuScript{S}$ is mean ergodic if and only if $\EuScript{S}_{\mathrm{rad}_{\EuScript{S}}(0)}$ is mean ergodic and for each $f \in \fix(\EuScript{S}_{\mathrm{rad}_{\EuScript{S}}(0)})$ there is $F \in \fix(\EuScript{S})$ with $f = F|_{\mathrm{M}(\EuScript{S})}$.
\end{corollary}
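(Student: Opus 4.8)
The plan is to read the statement off from Theorem~\ref{meanergodiccor2} together with the radical-free case handled in Corollary~\ref{meanergodiccor}. Abbreviate $L\defeq\mathrm{M}(\EuScript{S})=\supp\mathrm{rad}_{\EuScript{S}}(0)$ and $\EuScript{S}_L\defeq\EuScript{S}_{\mathrm{rad}_{\EuScript{S}}(0)}$. By the equivalence of (a) and (b) in Theorem~\ref{meanergodiccor2}, $\EuScript{S}$ is mean ergodic if and only if the map $\pi\colon\mathrm{ex}\,\mathrm{P}_{\EuScript{S}}(K)\to\mathrm{Prim}(\EuScript{S})$, $\mu\mapsto I_\mu$, is a homeomorphism \emph{and} the extension condition ``for each $f\in\fix(\EuScript{S}_L)$ there is $F\in\fix(\EuScript{S})$ with $f=F|_L$'' holds. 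Since the latter condition is verbatim the second half of the present corollary, it suffices to prove that $\pi$ is a homeomorphism precisely when $\EuScript{S}_L$ is mean ergodic. Now $\EuScript{S}_L$ is radical free: applying Proposition~\ref{lemmaquotient} with $I=\mathrm{rad}_{\EuScript{S}}(0)$ gives $\mathrm{rad}_{\EuScript{S}_L}(0)=\mathrm{rad}_{\EuScript{S}}(\mathrm{rad}_{\EuScript{S}}(0))|_L=\mathrm{rad}_{\EuScript{S}}(0)|_L=0$ in $\mathrm{C}(L)$, so by Corollary~\ref{meanergodiccor} the semigroup $\EuScript{S}_L$ is mean ergodic if and only if the analogous map $\pi_L\colon\mathrm{ex}\,\mathrm{P}_{\EuScript{S}_L}(L)\to\mathrm{Prim}(\EuScript{S}_L)$, $\nu\mapsto I_\nu$, is a homeomorphism. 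Thus everything reduces to showing that $\pi$ is a homeomorphism if and only if $\pi_L$ is.

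For this I would compare the two maps through the identifications already at hand. First, every invariant ergodic measure on $K$ has support contained in $L$, since $L$ is by definition the closure of the union of the supports of such measures (see Remark~\ref{capinv} and the following remarks). Hence Proposition~\ref{subsystem} shows that the canonical embedding $i\colon\mathrm{C}(L)'\to\mathrm{C}(K)'$ restricts to a \emph{homeomorphism} $i\colon\mathrm{ex}\,\mathrm{P}_{\EuScript{S}_L}(L)\to\mathrm{ex}\,\mathrm{P}_{\EuScript{S}}(K)$: it is affine, injective and weak*-continuous, it is \emph{onto} $\mathrm{ex}\,\mathrm{P}_{\EuScript{S}}(K)$ because each such extreme point is supported in $L$, and its inverse is weak*-continuous since $i$ is the adjoint of the surjective restriction map $\mathrm{C}(K)\to\mathrm{C}(L)$. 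Second, Lemma~\ref{lemmahomeo} provides the homeomorphism $\theta\colon\mathrm{Prim}(\EuScript{S})\to\mathrm{Prim}(\EuScript{S}_L)$, $p\mapsto p|_L$. Finally, the square with vertices $\mathrm{ex}\,\mathrm{P}_{\EuScript{S}_L}(L)$, $\mathrm{ex}\,\mathrm{P}_{\EuScript{S}}(K)$, $\mathrm{Prim}(\EuScript{S})$, $\mathrm{Prim}(\EuScript{S}_L)$ and arrows $i,\pi,\theta,\pi_L$ commutes, i.e.\ $\theta(I_{i(\nu)})=I_\nu$ for every $\nu\in\mathrm{ex}\,\mathrm{P}_{\EuScript{S}_L}(L)$; this is immediate from the identity $\langle|f|,\nu\rangle=\int_L|F|\,\mathrm{d}(i(\nu))$, valid for every $f\in\mathrm{C}(L)$ and every extension $F\in\mathrm{C}(K)$ of $f$, which was recorded in the proof of Proposition~\ref{lemmaquotient}. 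Consequently $\pi_L=\theta\circ\pi\circ i$, hence $\pi=\theta^{-1}\circ\pi_L\circ i^{-1}$, and since $\theta$ and $i$ are homeomorphisms, $\pi$ is a homeomorphism if and only if $\pi_L$ is.

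Putting the two paragraphs together: $\EuScript{S}$ is mean ergodic $\iff$ [$\pi$ is a homeomorphism and the extension condition holds] $\iff$ [$\pi_L$ is a homeomorphism and the extension condition holds] $\iff$ [$\EuScript{S}_{\mathrm{rad}_{\EuScript{S}}(0)}$ is mean ergodic and the extension condition holds], which is the assertion. The only step requiring genuine care, rather than formal bookkeeping, is checking that $i$ is a homeomorphism \emph{onto all of} $\mathrm{ex}\,\mathrm{P}_{\EuScript{S}}(K)$ and that it intertwines $\pi$ with $\pi_L$; both rest on the fact that no invariant ergodic measure charges $K\setminus\mathrm{M}(\EuScript{S})$, which is exactly why $\mathrm{M}(\EuScript{S})$, and not an arbitrary support of an $\EuScript{S}$-ideal, is the correct set to pass to here. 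Everything else is a direct consequence of Proposition~\ref{subsystem}, Lemma~\ref{lemmahomeo}, Corollary~\ref{meanergodiccor} and Theorem~\ref{meanergodiccor2}.
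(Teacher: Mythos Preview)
Your proof is correct and follows exactly the route the paper indicates: the paper simply records that the corollary ``follows from Proposition~\ref{subsystem} and Lemma~\ref{lemmahomeo}'', and you have faithfully unpacked this by combining those two results with Theorem~\ref{meanergodiccor2} and Corollary~\ref{meanergodiccor} via the commuting square $\pi_L=\theta\circ\pi\circ i$. The only cosmetic point is that your justification for the continuity of $i^{-1}$ (``since $i$ is the adjoint of the surjective restriction map'') is phrased slightly misleadingly---what you actually use, and what works, is that for each fixed $f\in\mathrm{C}(L)$ one may fix an extension $F\in\mathrm{C}(K)$ and then $\langle f,i^{-1}(\mu)\rangle=\langle F,\mu\rangle$ is weak*-continuous in $\mu$.
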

Finally we discuss some examples showing that the conditions of Theorem \ref{meanergodiccor2} (c) are independent of each other.
\begin{example}\label{finalex}
Consider the following continuous mappings $\varphi \colon K \longrightarrow K$ and the induced semigroups $\EuScript{S} = \EuScript{S}_\varphi  \subset \mathscr{L}(\mathrm{C}(K))$.\color{black}
\begin{enumerate}[(i)]
\item If $K = [0,1]$ and $\varphi(x) = x^2$ for $x \in K$, then $\mathrm{M}(\EuScript{S}) = \{0,1\}$ and the primitive spectrum is the two point discrete space. Clearly, both fixed points define uniquely ergodic sets, so conditions (i) and (ii) of Theorem \ref{meanergodiccor2} (c) are fulfilled. However, $\EuScript{S}$ is not mean ergodic since the function $f\colon \{0,1\}\longrightarrow \C$ defined by $f(0) \defeq 0$ and $f(1) \defeq 1$ has no invariant continuous extension to $K$.
\item If $K = \T$ then there is a homeomorphism $\varphi \colon \T\longrightarrow \T$ such that $\EuScript{S}$ is not uniquely ergodic, but minimal (see Theorem 5.8 of \cite{Parr1981}). \color{black} In particular, the support of every ergodic measure is $K$. \color{black} Thus the primitive spectrum is trivial and $\EuScript{S}$ is radical free whence Theorem \ref{meanergodiccor2} (c) (i) and (iii) are valid. $\EuScript{S}$ is not mean ergodic since supports of ergodic measures are not uniquely ergodic.
\item If $K$ and $\varphi$ are defined as in Example \ref{PrimExamples} (iii), then supports of ergodic measures are uniquely ergodic and $\EuScript{S}$ is radical free. Thus Theorem \ref{meanergodiccor2} (c) (ii) and (iii) are fulfilled. The semigroup $\EuScript{S}$ is not mean ergodic since the primitive spectrum is not Hausdorff.
\end{enumerate}
\end{example}
\parindent 0pt
\parskip 0.5\baselineskip
\setlength{\footskip}{4ex}
\bibliographystyle{alpha}
\bibliography{literature} 
\footnotesize

  \textsc{Henrik Kreidler, Mathematisches Institut, Universität Tübingen, Auf der Morgenstelle 10, D-72076 Tübingen, Germany}\par\nopagebreak
  \textit{E-mail address}: \texttt{hekr@fa.uni-tuebingen.de}
\end{document}